\renewcommand{\theequation}{\arabic{section}.\arabic{equation}}
\newcommand{\re}{\operatorname{Re}}
\newcommand{\im}{\operatorname{Im}}
\newcommand{\e}{\operatorname{e}}
\newtheorem{lemma}{Lemma}[section]
\newtheorem{theorem}[lemma]{Theorem}
\title{Non-hexagonal lattices from a two species interacting system}
\author{Senping Luo
  \thanks{Department of Mathematics, University of British Columbia,
  Vancouver, BC, Canada  V6T 1Z2}
\and Xiaofeng Ren
\thanks{Department of Mathematics, The George Washington University,
  Washington, DC 20052, USA. Corresponding author, ren@gwu.edu}
\and Juncheng Wei \footnotemark[1]}
\begin{document}
\maketitle

\begin{abstract}
  A two species interacting system motivated by the density functional theory
  for triblock copolymers contains long range interaction that affects
  the two species differently. In a two species periodic assembly of
  discs, the two species appear alternately on a lattice.
  A minimal two species periodic  assembly is one with the least
  energy per lattice cell area.
  There is a parameter $b$ in $[0,1]$ and the type of the lattice associated
  with a minimal assembly varies depending on $b$. There are several
  thresholds defined by a number $B=0.1867...$ 
  If $b \in [0, B)$, a minimal assembly is associated with a rectangular
    lattice whose ratio of the longer side and the
    shorter side is in $[\sqrt{3}, 1)$;
  if $b \in [B, 1-B]$, a minimal assembly is associated with a square lattice;
  if $b \in (1-B, 1]$, a minimal assembly is associated with a
  rhombic lattice with an acute angle in $[\frac{\pi}{3}, \frac{\pi}{2})$.
  Only when $b=1$, this rhombic lattice is a hexagonal lattice. 
  None of the other values of $b$ yields a hexagonal lattice,
  a sharp contrast to the situation for one species
  interacting systems, where hexagonal lattices are ubiquitously observed.

  \

  \noindent{\bf Key words}. Two species interacting system, triblock copolymer,
  two species periodic assembly of discs, rectangular lattice, square lattice,
  rhombic lattice, hexagonal lattice, duality property. 

  \

  \noindent {\bf AMS Subject Classifications}. 82B20 82D60 92C15 11F20

\end{abstract}

\section{Introduction}
\setcounter{equation}{0}

From honeycomb to chicken wire fence, from graphene to carbon nanotube, 
the hexagonal pattern is ubiquitous in nature.
The honeycomb conjecture states that the hexagonal tiling is the best way
to divide a surface into regions of equal area with the least total perimeter
\cite{hales}. The Fekete problem minimizes an interaction energy of points
on a sphere and
obtains a hexagonal arrangement of minimizing points (with some defects
due to a topological reason) \cite{bendito}. 

Against this conventional wisdom,
we present a problem where the hexagonal pattern is {\em generally not}
the most favored structure. Our study is motivated  by
Nakazawa and Ohta's theory for triblock copolymer morphology
\cite{nakazawa, rw4}. In an ABC triblock
copolymer  a molecule is a subchain of type A monomers connected to a subchain of type B monomers which in turn is connected to a subchain of type C monomers. 
Because of  the repulsion between the unlike monomers, the different type subchains tend to segregate. However since subchains are chemically bonded in molecules, segregation cannot lead to a macroscopic phase separation; only micro-domains
rich in individual type monomers emerge, forming morphological phases. Bonding of distinct monomer subchains provides an inhibition mechanism in block
copolymers.

The mathematical study of the triblock copolmyer problem is still in the early
stage. There are existence theorems about stationary assemblies of
core-shells \cite{rwang}, double bubbles \cite{rw19}, and discs \cite{rwang2},
with the last work being the most relevant to this paper. 
Here we treat two of the three monomer types of a triblock copolymer
as species and view the third type as the surrounding environment, dependent on
the two species. This way a triblock copolymer is a two species interacting
system.

The definition of our two species interacting system starts with a
lattice $\Lambda$ on the complex plane generated by two nonzero complex
numbers $\alpha_1$ and $\alpha_2$, with $\im (\alpha_2/\alpha_1) >0$, 
\begin{equation}
  \label{Lambda}
  \Lambda = \{ j_1 \alpha_1 + j_2 \alpha_2: \ j_1, j_2 \in \mathbb{Z}\}. 
\end{equation}
Define by $D_\alpha$ the parallelogram cell 
\begin{equation}
  \label{cell}
  D_\alpha = \{ t_1 \alpha_1 + t_2 \alpha_2:  t_1,t_2 \in (0,1) \}
\end{equation}
associated to the basis $\alpha = (\alpha_1, \alpha_2)$
of the lattice $\Lambda$. The lattice $\Lambda$ defines an equivalence
relation on $\mathbb{C}$ where two complex numbers are equivalent if
their difference is in $\Lambda$. The resulting space of equivalent classes
is denoted $\mathbb{C} / \Lambda$. It can be represented by $D_\alpha$ where
the opposite edges of $D_\alpha$ are identified. 

There are two sets of parameters in our model. The first consists of two numbers
$\omega_1$ and $\omega_2$ satisfying
\begin{equation}
  0< \omega_1, \ \omega_2 <1, \ \mbox{and} \ \omega_1 + \omega_2 <1.
  \label{omega}
\end{equation}
The second is a two by
two symmetric matrix $\gamma$,
\begin{equation}
  \label{gamma}
  \gamma = \left [ \begin{array}{ll} \gamma_{11} & \gamma_{12}
      \\ \gamma_{21} & \gamma_{22} \end{array} \right ],
  \ \gamma_{12}=\gamma_{21}.
\end{equation}
Furthermore, in this paper we assume that
\begin{equation}
  \gamma_{11} >0, \ \gamma_{22}>0, \ \gamma_{12}\geq 0,
  \ \gamma_{11} \gamma_{22} - \gamma_{12}^2 \geq 0.
  \label{gamma-1}
\end{equation}

Our model is a variational problem defined on pairs of
$\Lambda$-periodic sets with prescribed average size. More specifically
a pair $(\Omega_1, \Omega_2)$ of two subsets of $\mathbb{C}$ is
admissible if the following conditions hold. 
    Both $\Omega_1$ and $\Omega_2$  are $\Lambda$-periodic, i.e.
    \begin{equation}
      \Omega_j + \lambda = \Omega_j, \ \mbox{for all} \ \lambda \in \Lambda,
\ j=1,2; \label{periodicity} \end{equation}
 $\Omega_1$ and $\Omega_2$ are disjoint in the sense that
\begin{equation}
  |\Omega_1 \cap \Omega_2| = 0;
 \label{disjoint}
\end{equation}
 the average size of $\Omega_1$ and $\Omega_2$
are fixed at $\omega_1$, $\omega_2 \in (0,1)$ respectively, i.e.
\begin{equation}
  \label{areaconstraint}
   \frac{|\Omega_j \cap D_\alpha |}{|D_\alpha|} = \omega_j, \ j=1,2.  
  \end{equation}

In \eqref{disjoint} and \eqref{areaconstraint}, $|\cdot|$ denotes the
two-dimensional
Lebesgue measure. 
Although it can be given in terms of $\alpha_1$ and $\alpha_2$,
$|D_\alpha|$ actually depends on the lattice $\Lambda$, not the particular basis $\alpha$,
and therefore we alternatively write it as $|\Lambda|$,
\begin{equation}
\label{latticearea}
 |\Lambda|=|D_\alpha| = \im (\overline{\alpha_1} \alpha_2).
\end{equation}

Given an admissible pair $(\Omega_1, \Omega_2)$, let
$\Omega_3=\mathbb{C} \backslash (\Omega_1 \cup \Omega_2)$.
Again $\Lambda$ imposes an equivalent relation on $\Omega_j$ and the
resulting space of equivalence classes is denoted $\Omega_j / \Lambda$,
$j=1,2,3$. Define a functional
${\cal J}_\Lambda$ to be the free energy of $(\Omega_1, \Omega_2)$ on a cell
given by
\begin{equation}
  \label{J}
        {\cal J}_\Lambda(\Omega_1, \Omega_2) = \frac{1}{2} \sum_{j=1}^3
        {\cal P}_{\mathbb{C}/\Lambda} (\Omega_j/\Lambda)   
  + \sum_{j,k=1}^2 \frac{\gamma_{jk}}{2}\int_{D_\alpha}
  \nabla I_\Lambda(\Omega_j)(\zeta) \cdot \nabla I_\Lambda(\Omega_k)(\zeta)
  \, d\zeta.
\end{equation}

In (\ref{J}), ${\cal P}_{\mathbb{C}/\Lambda}(\Omega_j/\Lambda)$, $j=1,2$,
is the perimeter of $\Omega_j/\Lambda$
in $\mathbb{C}/\Lambda$. One can take a representation $D_\alpha$ of
$\mathbb{C}/\Lambda$, with its opposite sides identified, and treat
$\Omega_j \cap D_\alpha$, also with points on opposite sides identified, as
a subset of $D_\alpha$. Then  ${\cal P}_{\mathbb{C}/\Lambda} (\Omega_j/\Lambda)$ is
the perimeter of $\Omega_j \cap D_\alpha$. If $\Omega_j \cap D_\alpha$ is bounded
by $C^1$ curves, then the perimeter is just the total length of the curves.
More generally, for a merely measurable $\Lambda$-periodic $\Omega_j$,
\begin{equation}
  \label{perimeter}
  {\cal P}_{\mathbb{C}/\Lambda} (\Omega_j/\Lambda)
  = \sup_g \Big \{ \int_{\Omega_j \cap D_\alpha} \mbox{div} \,g(x) \, dx: \ 
  g \in C^1(\mathbb{C}/\Lambda, \mathbb{R}^2), \ |g(x)| \leq 1
  \ \forall x \in \mathbb{C} \Big
 \}.
  \end{equation}
Here $g \in C^1(\mathbb{C}/\Lambda, \mathbb{R}^2)$ means that $g$
is a continuously differential, $\Lambda$-periodic vector field
on $\mathbb{C}$; $|g(x)|$ is the geometric norm of the vector
$g(x) \in \mathbb{R}^2$.

In $\sum_{j=1}^3{\cal P}_{\mathbb{C}/\Lambda} (\Omega_j/\Lambda)$
each boundary curve separating a $\Omega_j/\Lambda$ from a
$\Omega_k/\Lambda$, $j,k=1,2,3$, $j\ne k$,
is counted exactly twice. The constant $\frac{1}{2}$
in the front takes care of the double counting.

The function $I_\Lambda(\Omega_j)$ is the $\Lambda$-periodic
solution of Poisson's equation
\begin{equation}
  \label{I}
  - \Delta I_\Lambda(\Omega_j)(\zeta) = \chi_{\Omega_j}(\zeta) - \omega_j
   \ \mbox{in} \ \mathbb{C}, 
 \ \ \int_{D_\alpha} I_\Lambda(\Omega_j)(\zeta) \, d\zeta =0, 
\end{equation}
where $\chi_{\Omega_j}$ is the characteristic function of $\Omega_j$.
Despite the appearance, the functional ${\cal J}_\Lambda$ depends on the
lattice $\Lambda$ instead of the particular basis $\alpha$.

A stationary point $(\Omega_1,\Omega_2)$ of ${\cal J}_\Lambda$ 
is a solution to the following equations of a free boundary problem: 
\begin{align}
  \kappa_{13} + \gamma_{11} I_\Lambda(\Omega_1) + \gamma_{12} I_\Lambda(\Omega_2)
    &= \mu_1
  \ \ \mbox{on} \ \partial \Omega_1 \cap \partial \Omega_3 \label{euler1}
  \\ \kappa_{23} + \gamma_{12} I_\Lambda(\Omega_1) + \gamma_{22} I_\Lambda(\Omega_2)
  &= \mu_2
   \ \ \mbox{on} \  \partial \Omega_2 \cap \partial \Omega_3 
 \label{euler2} \\
 \kappa_{12} + (\gamma_{11}-\gamma_{12}) I_\Lambda(\Omega_1) + 
 (\gamma_{12}-\gamma_{22}) I_\Lambda(\Omega_2)
 &= \mu_1-\mu_2 \ \ \mbox{on} \ 
  \partial \Omega_1 \cap \partial \Omega_2 \label{euler3} \\
   T_{13} + T_{23} + T_{12} &= \vec{0} \ \ \mbox{at} \ 
    \partial \Omega_1 \cap \partial \Omega_2 \cap \partial \Omega_3. 
  \label{euler4} 
\end{align}
In (\ref{euler1})-(\ref{euler3}) $\kappa_{13}$, $\kappa_{23}$, and $\kappa_{12}$
are the curvatures of the curves $\partial \Omega_1 \cap \partial \Omega_3$, 
$\partial \Omega_2 \cap \partial \Omega_3$, and 
$\partial \Omega_1 \cap \partial \Omega_2$, respectively.
The unknown constants $\mu_1$ and $\mu_2$ are Lagrange multipliers
associated with the constraints \eqref{areaconstraint} for $\Omega_1$ and
$\Omega_2$ respectively.
The three interfaces,  $\partial \Omega_1 \cap \partial \Omega_3$,
$\partial \Omega_2 \cap \partial \Omega_3$ and
$\partial \Omega_1 \cap \partial \Omega_2$,
may meet at a common point in $D$, which is termed a triple junction point. 
In (\ref{euler4}), $T_{13}$, $T_{23}$ and $T_{12}$ are respectively
the unit tangent vectors
of these curves at triple junction points. This equation simply says that at
a triple junction
point three curves meet at $\frac{2\pi}{3}$ angle.

In this paper, we only consider a special type of $(\Omega_1, \Omega_2)$,
termed two species periodic assemblies of discs, denoted by 
$(\Omega_{\alpha,1}, \Omega_{\alpha,2})$, with 
\begin{align}
  \Omega_{\alpha, 1}& = \bigcup_{\lambda \in \Lambda}
   \Big \{ B(\xi, r_1) \cup B(\xi', r_1):
  \xi = \frac{3}{4} \alpha_1
     + \frac{1}{4}\alpha_2 + \lambda, \ 
     \xi' = \frac{1}{4} \alpha_1
     + \frac{3}{4} \alpha_2 + \lambda \Big \}, 
  \label{Omegaalpha1} \\
  \Omega_{\alpha, 2}& = \bigcup_{\lambda \in \Lambda}
  \Big \{ B(\xi, r_2) \cup B(\xi', r_2):
  \xi = \frac{1}{4} \alpha_1
     + \frac{1}{4} \alpha_2 + \lambda , \ 
     \xi' = \frac{3}{4}  \alpha_1
     + \frac{3}{4} \alpha_2 + \lambda
     \Big \}.   \label{Omegaalpha2}
\end{align}
In \eqref{Omegaalpha1} and \eqref{Omegaalpha2},
$B(\xi, r_j)$, or $B(\xi',r_j)$, is the closed disc centered at
$\xi$ of radius $r_j$; the $r_j$'s are given by
\begin{equation}
  \omega_j = \frac{2\pi r_j^2}{|D_\alpha|}, \ j=1,2.
  \label{rj}
\end{equation}
Be aware that $(\Omega_{\alpha,1},\Omega_{\alpha,2})$ defined this way
depends on the basis $\alpha$, not
the lattice $\Lambda$ generated by $\alpha$. One may have two different
bases that generate the same lattice, but they define two distinct
assemblies.

Shifting $(\Omega_{\alpha,1}, \Omega_{\alpha,2})$ does not
change its energy, so
our choice for the centers of the discs
in \eqref{Omegaalpha1} and \eqref{Omegaalpha2} is not the only one.
Another aesthetically pleasing placement is to put the disc centers on
the lattice points and half lattice points; see Figure \ref{f-assemblies}.
Nevertheless we prefer not to have discs on the boundary of the parallelogram
cell $D_\alpha$.

\begin{figure}
\centering
 \includegraphics[scale=0.13]{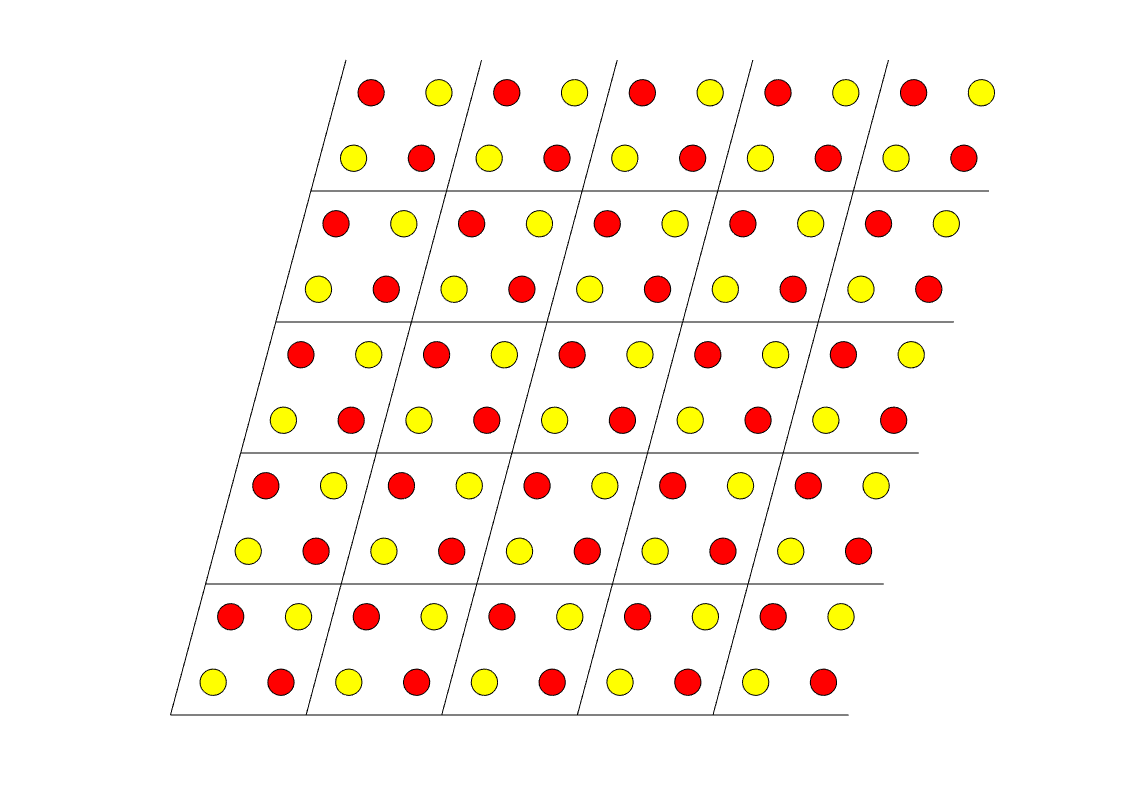} 
 \includegraphics[scale=0.13]{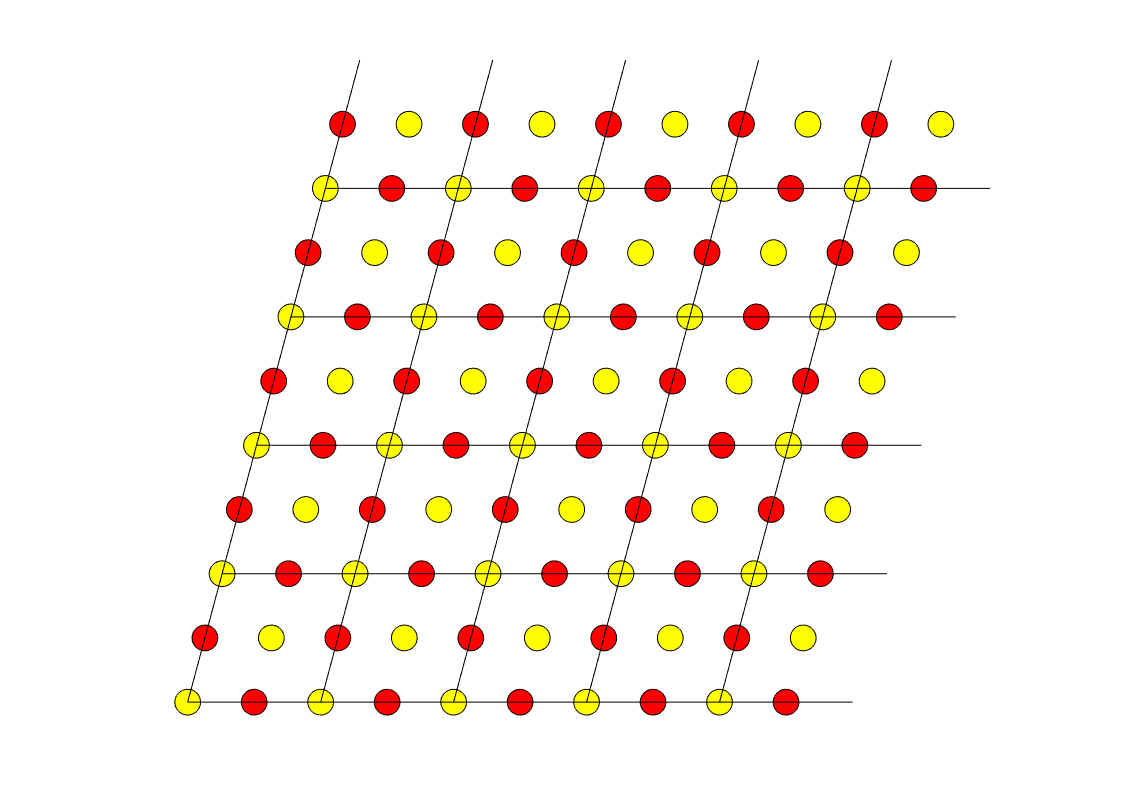}
 \caption{A two species periodic assembly of discs given by
   \eqref{Omegaalpha1} and \eqref{Omegaalpha2}, and a shift of the  assembly
   with disc centers at the lattice points and the half lattice
   points.}
\label{f-assemblies}
\end{figure}

A two species periodic assembly $(\Omega_{\alpha,1}, \Omega_{\alpha,2})$
is  not a stationary point of the energy
fuctional ${\cal J}_\Lambda$. However Ren and Wang have shown the existence
of statoinary points that are unions of perturbed discs in a bounded
domain with the Neumann bounary condition \cite{rwang2}.
Numerical evidence strongly suggests the existence of
stationary points similar to two species assemblies \cite{wang-ren-zhao}.

In this paper we determine, in terms of $\alpha$,
which $(\Omega_{\alpha,1}, \Omega_{\alpha,2})$ is the most energetically
favored. For this purpose, it is more appropriate to consider the 
energy per cell area instead of the energy on a cell. Namely consider
\begin{equation}
  \label{tJ}
  \widetilde{\cal J}_\Lambda(\Omega_1,\Omega_2)
  = \frac{1}{|\Lambda|} {\cal J}_\Lambda(\Omega_1,\Omega_2),
\end{equation}
take $(\Omega_1,\Omega_2)$ to be a two species periodic assembly, and
minimize energy per cell area among all such assemblies
with respect to $\alpha$, i.e.
\begin{equation}
  \min_\alpha \Big \{  \widetilde{\cal J}_\Lambda
   (\Omega_{\alpha,1}, \Omega_{\alpha,2}):
  \ \alpha=(\alpha_1,\alpha_2), \
  \alpha_1, \alpha_2 \in \mathbb{C}\backslash\{0\},
    \ \im \frac{\alpha_2}{ \alpha_1} >0, 
  \ \Lambda \ \mbox{is generated by}
   \ \alpha \Big \}.
  \label{minimization}
\end{equation}

Several lattices will appear as the most favored structures. They are
illustrated in Figure \ref{f-lattices}. A rectangular
lattice has a basis $\alpha$ whose parallelogram cell $D_\alpha$ is a rectangle.
A square lattice has a square as a parallelogram cell. A rhombic lattice
has a rhombus cell,
i.e. a parallelogram cell whose four sides have the same length. Finally
a hexagonal lattice has a parallelogram cell with 
four equal length sides and an angle of $\frac{\pi}{3}$ between two sides. 
If we let
\begin{equation}
  \tau = \frac{\alpha_2}{\alpha_1},
\end{equation}
then in terms of $\tau$,
$\Lambda$ is rectangular if $\re \tau =0$, $\Lambda$ is
square if $\tau = i$, $\Lambda$ is rhombic if $|\tau| =1$, and
$\Lambda$ is hexagonal if $\tau = \frac{1}{2} + \frac{\sqrt{3}}{2} i$. 
Note that these classes of lattices are not mutually exclusive. 
A hexagonal lattice is a rhombic lattice; a square
lattice is both a rectangular lattice and a rhombic lattice.

The reason that a rhombic lattice with a $\frac{\pi}{3}$ angle is termed
a hexagonal lattice comes from its Voronoi cells. At each
lattice point, the Voronoi cell of this lattice point consists of points
in $\mathbb{C}$ that are closer to this lattice point than any
other lattice points.
For the rhombic lattice with a $\frac{\pi}{3}$ angle, the Voronoi
cell at each lattice point is a regular hexagon. With Voronoi cells at
all lattice points, the hexagonal lattice gives rise to a honeycomb pattern. 

The main result of this paper asserts that for a two species
periodic assembly of discs to minimize the energy per cell area,
its associated parallelogram cell is either a
rectangle (including a square) whose ratio of the longer side
and the shorter side lies between $1$ and $\sqrt{3}$, or a rhombus
(including one with a $\frac{\pi}{3}$ acute angle)
whose acute angle is between $\frac{\pi}{3}$ and
$\frac{\pi}{2}$. Any two species
periodic assembly of discs that minimizes the energy per cell area is
called a minimal assembly and its  associated lattice 
is called a minimal lattice.

The most critical parameter in this problem is $b$ given in terms of
$\omega_j$ and $\gamma_{jk}$ by
\begin{equation}
        b =  \frac{2 \gamma_{12} \omega_1\omega_2}
       { \gamma_{11} \omega_1^2 + \gamma_{22} \omega_2^2}. 
    \label{binit}
\end{equation}
Conditions \eqref{omega}, \eqref{gamma}, and \eqref{gamma-1}
on $\omega_j$ and $\gamma_{jk}$ imply that
\begin{equation}
  b \in [0,1].
  \label{b-range}
  \end{equation}

To ensure the disjoint condition \eqref{disjoint}
for potential minimal assemblies we assume that
$\omega_1$ and $\omega_2$ are sufficiently small. Namely let $\omega_0>0$
be small enough so that if
\begin{equation}
  \omega_j < \omega_0, \ \ j=1,2,
  \label{omega0}
\end{equation}
and $(\Omega_{\alpha, 1}, \Omega_{\alpha,2})$ is a
two species periodic assembly of discs
whose basis $(\alpha_1,\alpha_2)$ satisfies
\begin{equation}
  \label{rectangle-cond}
  \re \tau =0 \ \mbox{and} \  |\tau| \in [1, \sqrt{3}],
\end{equation}
or 
\begin{equation}
  \label{rhombus-cond}
  |\tau| =1 \ \mbox{and} \  \arg \tau \in
  \Big [\frac{\pi}{3}, \frac{\pi}{2} \Big ],
\end{equation}
then $(\Omega_{\alpha, 1}, \Omega_{\alpha,2})$ is disjoint in the sense
of \eqref{disjoint}.
The line segment \eqref{rectangle-cond} and the arc \eqref{rhombus-cond}
are illustrated in the first plot of Figure \ref{f-W}. 
Now we state our theorem.

\begin{theorem}
  Let the parameters $\omega_j$, $j=1,2$, and $\gamma_{jk}$, $j,k=1,2$,
    satisfy the conditions \eqref{omega}, \eqref{gamma}, \eqref{gamma-1},
    and \eqref{omega0}. 
  The minimization problem \eqref{minimization} always admits a minimum. Let  
  $\alpha_\ast=(\alpha_{\ast,1}, \alpha_{\ast,2})$ be a minimum of
  \eqref{minimization}, $\Lambda_\ast$ be the lattice determined by
  $\alpha_\ast$.
   Then there exists $B = 0.1867...$ such that the following statements hold.
  \begin{enumerate}
  \item  If $b=0$, then $\Lambda_\ast$ is a rectangular lattice
    whose ratio of the longer side and the
    shorter side is $\sqrt{3}$.
  \item If $b \in (0, B)$, then $\Lambda_\ast$ is a rectangular lattice
    whose ratio of the longer side  and the shorter side  is
    in $(1, \sqrt{3})$. As $b$ increases from $0$ to $B$, this ratio
    decreases from $\sqrt{3}$ to $1$.
  \item If $b \in [B, 1-B]$, then $\Lambda_\ast$ is a square lattice.
  \item If $b \in (1-B, 1)$, then $\Lambda_\ast$ is a non-square,
    non-hexagonal rhombic lattice with an acute angle in
    $(\frac{\pi}{3}, \frac{\pi}{2})$. As $b$ increases from
    $1-B$ to $1$, this angle decreases from $\frac{\pi}{2}$ to
    $\frac{\pi}{3}$. 
  \item If $b =1$, then $\Lambda_\ast$ is a hexagonal lattice. 
    \end{enumerate}
  \label{t-main}
\end{theorem}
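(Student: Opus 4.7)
The plan is to reduce the shape minimization to a single-variable problem on the moduli of lattices indexed by $\tau=\alpha_2/\alpha_1\in\mathbb{H}$, isolate the dependence on $b$, and then use a duality symmetry together with an explicit one-variable analysis to locate the minimizer. Expanding the energy via Fourier series on the torus $\mathbb{C}/\Lambda$, write $\chi_{\Omega_j}-\omega_j=\sum_{k\in\Lambda^*\setminus\{0\}}\hat c_j(k)e^{2\pi i k\cdot\zeta}$; Parseval converts $\int_{D_\alpha}\nabla I_\Lambda(\Omega_j)\cdot\nabla I_\Lambda(\Omega_k)\,d\zeta$ into a sum over the dual lattice $\Lambda^*$ in which Bessel-function factors arise from the discs and trigonometric factors $e^{2\pi i k\cdot\xi}+e^{2\pi i k\cdot\xi'}$ arise from the two centers per species. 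The perimeter term is shape-independent, $\tfrac12\sum_j\mathcal{P}_{\mathbb{C}/\Lambda}(\Omega_j/\Lambda)/|\Lambda|=C(\omega_1,\omega_2)/\sqrt{|\Lambda|}$. Translation invariance makes the species-1 and species-2 self-interactions coincide, and after extracting the $|\Lambda|$-scaling the energy per cell area becomes
\[
\widetilde{\mathcal J}_\Lambda=\frac{C(\omega_1,\omega_2)}{\sqrt{|\Lambda|}}+(\gamma_{11}\omega_1^2+\gamma_{22}\omega_2^2)\,\Psi(\tau,|\Lambda|)+2\gamma_{12}\omega_1\omega_2\,\Phi(\tau,|\Lambda|),
\]
where $\Psi,\Phi$ are explicit convergent lattice sums.

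Next minimize over $|\Lambda|$ with $\tau$ fixed. The interplay of the perimeter ($\sim|\Lambda|^{-1/2}$) and the interaction ($\sim|\Lambda|$ in the small-$\omega_j$ regime enforced by \eqref{omega0}) is a convex one-variable minimization whose optimal value is proportional to $\bigl[(\gamma_{11}\omega_1^2+\gamma_{22}\omega_2^2)\bigl(W_A(\tau)+b\,W_B(\tau)\bigr)\bigr]^{1/3}$, with $b$ as in \eqref{binit} and $W_A,W_B$ the $\tau$-only components of $\Psi,\Phi$ after the scaling. Thus \eqref{minimization} reduces to minimizing
\[
\Theta_b(\tau):=W_A(\tau)+b\,W_B(\tau)
\]
on the moduli space $\mathbb{H}/\Gamma$, where $\Gamma\subset SL_2(\mathbb{Z})$ is the subgroup of basis changes preserving the partition of $\tfrac12\Lambda/\Lambda$ into the species-1 and species-2 cosets.

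The heart of the argument is the analysis of $\Theta_b$. First, establish a duality relation expressing $W_B(\tau^\sharp)$ as an explicit affine combination of $W_A(\tau)$ and $W_A(\tau^\sharp)$ (and $W_A\leftrightarrow W_B$) under an involution $\tau\mapsto\tau^\sharp$ of the fundamental domain that fixes $\tau=i$ and swaps the rectangular arc \eqref{rectangle-cond} with the rhombic arc \eqref{rhombus-cond}; this converts the problem at parameter $b$ into the problem at $1-b$ and confines the detailed computation to $b\in[0,\tfrac12]$. Second, use monotonicity properties of theta/Epstein lattice sums (Montgomery-type inequalities) to restrict any global minimizer of $\Theta_b$ to the rectangular arc or to the point $\tau=i$. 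Third, on the rectangular arc $\tau=it$, $t\in[1,\sqrt 3]$, compute $\Theta_b(it)$ as an explicit series in $t,b$, solve $\partial_t\Theta_b(it)=0$, and verify that the critical point is $t=\sqrt 3$ at $b=0$, that it reaches $t=1$ exactly when $b=B$ (determined by $\partial_t\Theta_b(i)\big|_{b=B}=0$, numerically $B=0.1867\ldots$), and that the solution $t(b)$ is strictly decreasing on $[0,B]$. The duality then transfers this analysis to the rhombic arc for $b\in[1-B,1]$, giving the stated behaviour of the acute angle.

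The main obstacle is proving that every global minimizer of $\Theta_b$ on $\mathbb{H}/\Gamma$ lies on the union of the rectangular and rhombic arcs rather than in the interior of the fundamental domain. This requires two-variable control of the lattice sums via their theta-series representations and modular transformation formulas to reduce the candidate region to a compact subdomain, followed by monotonicity arguments along curves transversal to the two arcs; the duality symmetry halves the work but does not eliminate it, and a residual interval-arithmetic verification of a finite set of inequalities is almost certainly unavoidable.
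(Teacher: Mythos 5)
Your outline reproduces the paper's structural skeleton: separate size from shape and minimize over the scale to get an objective of the form $\bigl[\,\cdot\,\bigr]^{1/3}$ (Lemma \ref{l-size}); isolate the parameter $b$ so that the shape functional is an affine combination $W_A+bW_B$ of two lattice functions (Lemma \ref{l-FtF}); exploit a duality $b\leftrightarrow 1-b$ under an involution of the fundamental domain (Lemma \ref{l-dual}); and finish with a one-variable analysis on the rectangular arc producing the threshold $B$. Your identification of the correct symmetry group (the subgroup preserving the partition of $\tfrac12\Lambda/\Lambda$ into the two species cosets, i.e.\ $\Gamma'$ rather than all of $SL_2(\mathbb{Z})$) is also right, and is a point that is easy to get wrong.

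However, there are two genuine gaps. First, the step you yourself flag as the main obstacle --- confining the global minimizer to the union of the rectangular and rhombic arcs --- is proposed to be handled by ``Montgomery-type'' monotonicity of theta/Epstein sums. That is precisely the tool the paper says does not extend beyond $b=1$: Montgomery's inequality and the Epstein-zeta representation control $f_1$ (the one-species Epstein/eta functional), but the second component $f_0(z)=\log|\im(\frac{z+1}{2})\eta(\frac{z+1}{2})|$ is not invariant under the full modular group and is not covered by those results. The paper's actual mechanism is different: $X_b=\partial_x f_b$ is harmonic on $W$, its boundary values on the imaginary axis and the line $\re z=1$ vanish, its behavior at the singular point $z=1$ is controlled directly (Lemma \ref{l-singular}), and its sign on the unit-circle portion of $\partial W$ is supplied by the duality formula of Lemma \ref{l-circle-im} together with the one-dimensional result for $f_{1-b}$; the maximum principle then gives $X_b<0$ in $W$ (Lemma \ref{l-monotone}). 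Without some substitute for this, your reduction to the arcs does not go through. Second, your determination of $B$ by ``$\partial_t\Theta_b(it)\big|_{t=1}=0$'' cannot work as stated: $t=1$ is a critical point of $t\mapsto\Theta_b(it)$ for \emph{every} $b$ (by the $t\to 1/t$ symmetry, Lemma \ref{l-imaginary}), so that equation is vacuous. The threshold is determined by the vanishing of the \emph{second} derivative at $t=1$, as in \eqref{interp}, i.e.\ by the bifurcation of the interior critical point $q_b$ off $t=1$. Relatedly, ``solve $\partial_t\Theta_b=0$ and verify $t(b)$ is decreasing'' conceals the paper's hardest step: uniqueness of the interior critical point and monotonicity of $q_b$ in $b$ rest on the monotonicity of the ratio $Y_0(yi)/Y_1(yi)$ on $(1,\sqrt3)$, whose proof occupies the entire appendix and is not a routine computation.
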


The threshold $B$ is defined precisely in \eqref{B} by two infinite
series, from which one finds its numerical value.

Only in the case $b=1$, $\widetilde{\cal J}_\Lambda
(\Omega_{\alpha,1}, \Omega_{\alpha,2})$ is minimized by a hexagonal lattice. In
all other cases minimal lattices are not hexagonal. As a matter of fact,
our assumption on $\gamma$ in this paper is a bit different from the
conditions for
$\gamma$ in a triblock copolymer. In a triblock copolymer, instead of
\eqref{gamma-1}, $\gamma$ needs to be positive definite.
In \cite{rwang2}, where Ren and Wang found  assemblies of perturbed discs
as stationary points, $\gamma_{12}$ is positive.
In terms of $b$, $\gamma$ being positive
definite and $\gamma_{12}>0$ mean that $b \in (0,1)$. 

In this paper we include both the $b=0$ case and the $b=1$ case
for good reasons.
The case $b=1$ corresponds to $\gamma_{11} \gamma_{22} - \gamma_{12}^2=0$,
i.e. $\gamma$ has a non-trivial kernel, and
$(-\omega_1, \omega_2)$ is in the kernel of $\gamma$. 
This case is actually very special. It is
equivalent to a problem studied by
Chen and Oshita in \cite{chenoshita-2}, a simpler one species analogy of
the two species problem studied here. The motivation of that problem
comes from the study of diblock copolymers where a 
molecule is a subchain of type A monomers connected to a subchain of
type B monomers. With one type treated as a species and the other as
the surrounding environment, a diblock copolymer is a one species
interacting system.

The recent years have seen active work on the diblock copolymer problem;
see \cite{rw, rw12, aco,
  choksi-peletier, morini-sternberg, goldman-muratov-serfaty}
and the references therein. 
Based on a density functional theory of Ohta and Kawasaki
\cite{ok}, the free energy of a diblock copolymer system on a 
$\Lambda$-periodic domain is
\begin{equation}
  {\cal E}_\Lambda (\Omega) = {\cal P}_{\mathbb{C}/\Lambda}(\Omega/\Lambda)
  + \frac{\gamma_d}{2} \int_{D_\alpha} |\nabla I_\Lambda(\Omega)(\zeta)|^2 \,
  d \zeta.
  \end{equation}
Here, analogous to the two species problem, $\Omega$ is a $\Lambda$-periodic
subset of $\mathbb{C}$ under the average area constraint
  $\frac{|\Omega \cap D_\alpha|}{|D_\alpha|} = \omega$
where $\omega \in (0,1)$ is one of the two given parameters.
The other parameter is
the number $\gamma_d >0$. Now take $\Omega$ to be $\Omega^d_\Lambda$,
the union of discs centered at
$\frac{\alpha_1+\alpha_2}{2} + \lambda$, $\lambda \in \Lambda$,
of radius $\sqrt{\frac{\omega |D_\alpha|}{\pi}}$,
and minimize
the energy per cell area with respect to $\Lambda$:
\begin{equation}
  \label{db-E}
   \min_\Lambda \frac{1}{|\Lambda|} {\cal E}_\Lambda(\Omega^d_\Lambda)
\end{equation}
This time, unlike in the two species problem, $\Omega^d_\Lambda$ depends on
the lattice $\Lambda$, not the basis $(\alpha_1, \alpha_2)$. 
Chen and Oshita showed that \eqref{db-E} is minimized by a hexagonal lattice.

In \cite{sandier-serfaty} Sandier and Serfaty studied the
Ginzburg-Landau problem with magnetic field and arrived at a reduced
energy. Minimization of this energy turns out to be the same as
the minimization problem \eqref{db-E}. 

In our two species problem \eqref{minimization},
the condition $b=1$ actually makes
the two species indistinguishable as far as interaction is concerned. 
It means that the two species function as one species, hence the equivalence
to the one species problem \eqref{db-E}.
The case $b=0$ is {\em dual} to the $b=1$ case, a point explained below.
It is therefore natural to include both cases.

Our work starts with Lemma \ref{l-size}, which states that for us
to solve \eqref{minimization} it suffices to minimize the energy among 
two species periodic assemblies of unit cell area.
Then in Lemma \ref{l-Ff} it is
shown that the latter problem is equivalent to maximizing a function,
\begin{equation}
  f_b (z) = b \log \big | \im \big (z \big ) \eta\big (z \big) \big|
  + (1-b) \log \big | \im \big(\frac{z+1}{2}\big)
  \eta \big(\frac{z+1}{2}\big) \big |,
    \label{fb-intro}
\end{equation}
with respect to $z$ in the set $\{ z \in \mathbb{C}: \ \im z >0, 
\ |z| \geq 1, \ 0 \leq \re z \leq 1 \}$. Here
\begin{equation}
  \label{eta-intro}
  \eta(z) = e^{\frac{\pi}{3} z i}\prod_{n=1}^\infty\big(1- e^{2\pi nz i}\big)^4
\end{equation}
is the fourth power of the Dedekind eta function.

If $b=1$, then $f_b=f_1$ and we are looking at the problem studied by
Chen and Oshita \cite{chenoshita-2}, and Sandier and Serfaty
\cite{sandier-serfaty}. In this case, $f_1$ is maximized in a smaller set,
$\{ z \in \mathbb{C}: \ |z| \geq 1, \ 0 \leq \re z \leq 1/2 \}$.
Using a maximum principle argument, Chen and Oshita showed that $f_1$
is maximized at $z = \frac{1}{2} + \frac{\sqrt{3}}{2} i$, which
corresponds to the hexagonal lattice. Sandier and Serfaty used a relation
between the Dedekind eta function and the Epstein zeta function, and
a property of the Jacobi theta function to arrive at the same conclusion. 

Neither method seems to be applicable to the two species system with
$b \ne 1$. Instead we rely on a duality principle, Lemma \ref{l-dual},
which shows that maximizing $f_b$ is equivalent to maximizing $f_{1-b}$.
This allows us to only consider $b \in [0,1/2]$, and there we are
able to show that $f_b(z)$ attains the maximum on the imaginary
axis above $i$, i.e. $\re z =0$ and $\im z \geq 1$. 

So we turn to maximize $f_b(y i)$ with respect to $y \geq 1$.
The most technical part of this work, Lemma \ref{l-fb-im}, shows that when
$b =0$, $f_0(y i)$ is maximized at $y=\sqrt{3}$;
when $b \in (0, B)$, $f_b(y i)$ is maximized at some
$y = q_b \in (1, \sqrt{3})$;
when $b \in [B, 1]$, $f_b(y i)$ is maximized at $y=1$.
The theorem then follows readily.
The key step in the proof of Lemma \ref{l-fb-im}
is to establish a  monotonicity property
for the ratio of the derivatives of $f_0(y i)$ and $f_1(y i)$ with respective
to $y \in (1, \sqrt{3})$. This piece of argument is placed in the appendix
so a reader who is more interested in the overall strategy of this work
may skip it at the first reading.

\begin{figure}
\centering
 \includegraphics[scale=0.12]{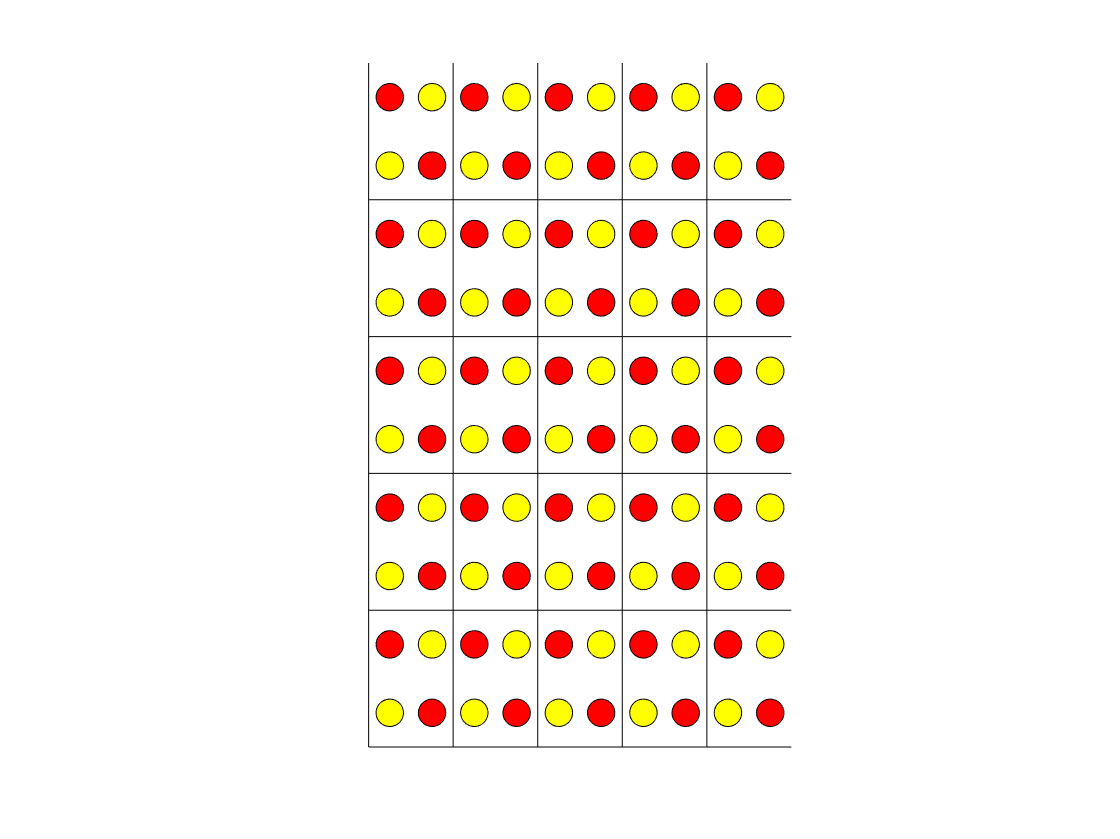} 
 \includegraphics[scale=0.12]{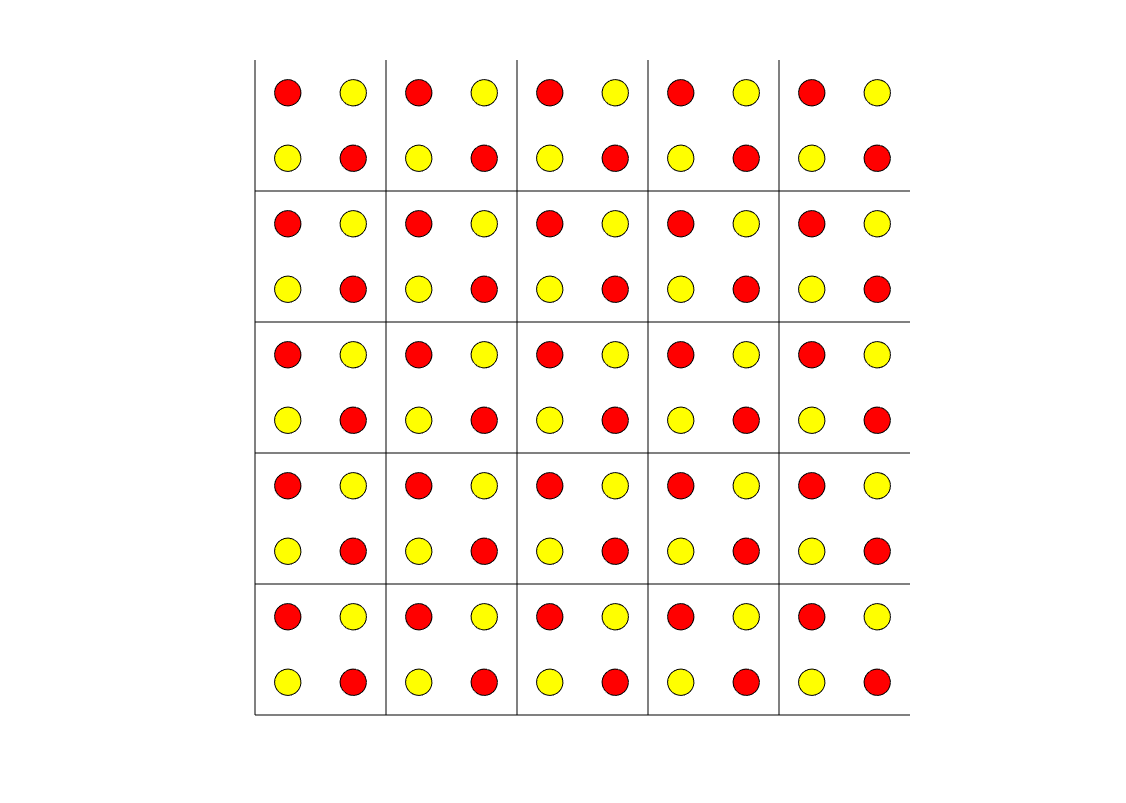}
 \includegraphics[scale=0.12]{rhombus.png} 
 \includegraphics[scale=0.15]{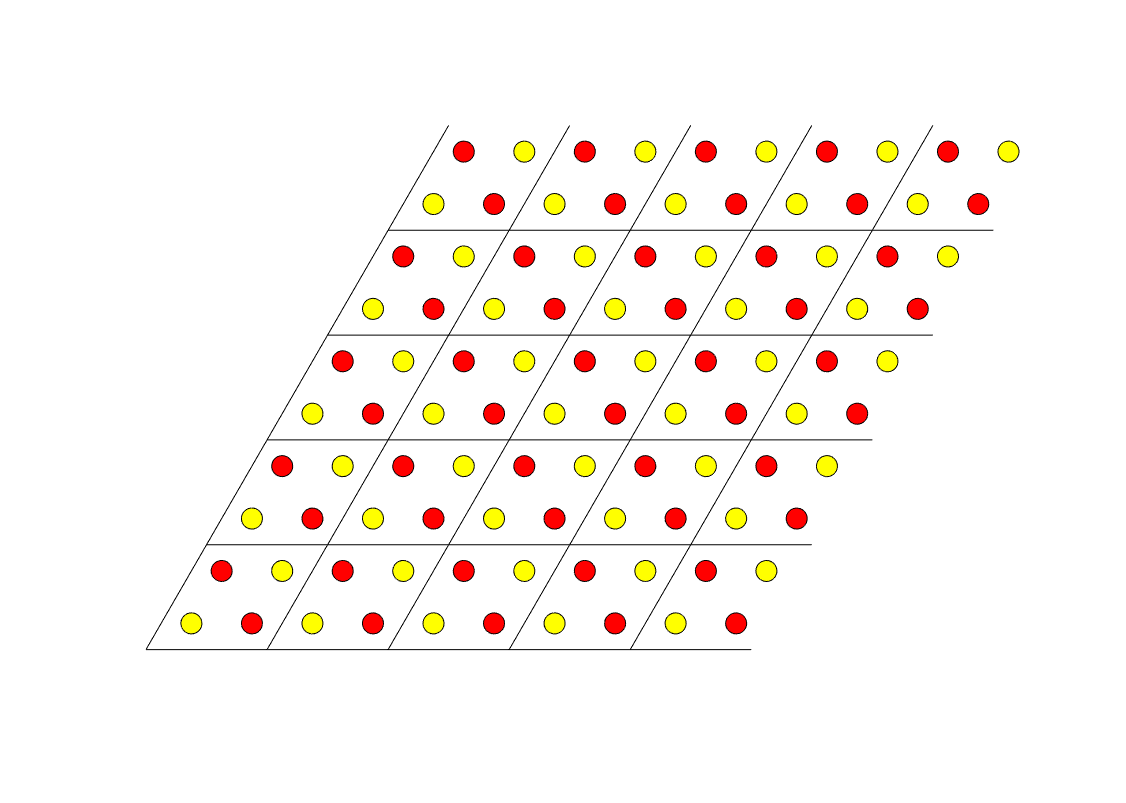}
 \caption{Two species periodic assemblies of discs with their associated
   lattices.
   First row from left to right: a rectangular lattice and a square lattice.
   Second row from left to right: a rhombic lattice and a hexagonal lattice.}
\label{f-lattices}
\end{figure}

\section{Derivation of $f_b$}
\setcounter{equation}{0}

The size and the shape of a two species periodic assembly play different
roles in its energy. To separate the two factors write the basis
of a given two species periodic assembly as
$t \alpha = (t\alpha_1, t\alpha_2)$ where $ t \in (0, \infty)$ and
the parallelogram generated by $\alpha = (\alpha_1, \alpha_2)$ has the unit
area, i.e. $|D_\alpha| =1$. This way the assembly is now denoted by
$(\Omega_{t\alpha,1},\Omega_{t\alpha,2}) $, with $t$ measuring the size of
the assembly (note $|D_{t\alpha}| = t^2$), and $D_\alpha$ describing
the shape of the assembly. 
The lattice generated by $\alpha$ is denoted $\Lambda$ and the
lattice generated by $t \alpha$ is $t \Lambda$. 

\begin{lemma}
  Fix  $\alpha_1, \alpha_2 \in
  \mathbb{C}\backslash \{0\}$, $\im(\alpha_2/\alpha_1)>0$, and
  $|D_\alpha|=1$.
  Among all 2 species periodic assemblies $\Omega_{t\alpha}$,
  $t \in (0,\infty)$, the energy per cell area is minimized by the one with
  $t = t_{\alpha}$, where
  \begin{equation}
    \label{t-alpha}
  t_\alpha = \Big ( \frac{2 \sqrt{2\pi \omega_1} + 2 \sqrt{2\pi \omega_2} }
  {\sum_{j,k=1}^2 \gamma_{jk} \int_{D_\alpha}
  \nabla I_{\Lambda}(\Omega_{\alpha,j})(\zeta)
  \cdot \nabla I_{\Lambda}(\Omega_{\alpha, k})(\zeta) \, d \zeta } \Big )^{1/3}.
  \end{equation}
  The energy per cell area of this assembly is
  \begin{equation}
    \label{energy-per-cell}
  \widetilde{\cal J}_{t_\alpha \Lambda}
  ((\Omega_{t_\alpha \alpha,1}, \Omega_{t_\alpha \alpha,2} ) =
  3   \Big ( \sqrt{2\pi \omega_1} +  \sqrt{2\pi \omega_2} \Big )^{2/3}
  \Big ( \sum_{j,k=1}^2 \frac{\gamma_{jk}}{2} \int_{D_\alpha}
  \nabla I_{\Lambda}(\Omega_{\alpha,j})(\zeta)
  \cdot \nabla I_{\Lambda}(\Omega_{\alpha, k})(\zeta) \, d \zeta \Big )^{1/3}.
  \end{equation}
  Consequently minimization of \eqref{minimization} is reduced to minimizing
  \begin{equation}
  \label{F}
        {\cal F}(\alpha) = \sum_{j,k=1}^2 \frac{\gamma_{jk}}{2}
        \int_{D_\alpha}
  \nabla I_{\Lambda}(\Omega_{\alpha,j})(\zeta)
  \cdot \nabla I_{\Lambda}(\Omega_{\alpha, k})(\zeta) \, d \zeta, \ \ |D_\alpha| =1,
  \end{equation}
with respect to $\alpha$ of unit cell area.
  \label{l-size}
  \end{lemma}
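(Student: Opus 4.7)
The strategy is to decouple the size $t$ from the shape $\alpha$, compute the scaling of each term of $\widetilde{\cal J}$ under $\alpha\mapsto t\alpha$, optimize in $t$ by single-variable calculus, and then read off the reduced problem in $\alpha$.

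First I would verify the spatial rescaling $\Omega_{t\alpha,j} = t\,\Omega_{\alpha,j}$: the disc centers in \eqref{Omegaalpha1}--\eqref{Omegaalpha2} scale linearly by construction, and by \eqref{rj} the radii satisfy $r_j(t\alpha) = t r_j(\alpha)$ since $|D_{t\alpha}|=t^2$. In particular, every disc circumference scales by a factor of $t$, so using $\partial\Omega_3 = \partial\Omega_1\cup\partial\Omega_2$ the double-counted perimeter sum per cell becomes
\begin{equation*}
\tfrac12\sum_{j=1}^3 {\cal P}_{\mathbb{C}/t\Lambda}(\Omega_{t\alpha,j}/t\Lambda) = 2t\bigl(\sqrt{2\pi\omega_1}+\sqrt{2\pi\omega_2}\bigr).
\end{equation*}
For the interaction term, one checks that the function $v(\eta):= t^2 I_\Lambda(\Omega_{\alpha,j})(\eta/t)$ is $t\Lambda$-periodic, has mean zero on $D_{t\alpha}$, and satisfies $-\Delta v = \chi_{\Omega_{t\alpha,j}} - \omega_j$, so the uniqueness built into \eqref{I} identifies it with $I_{t\Lambda}(\Omega_{t\alpha,j})$. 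Taking gradients and substituting $\eta = t\zeta$ yield
\begin{equation*}
\int_{D_{t\alpha}}\nabla I_{t\Lambda}(\Omega_{t\alpha,j})\cdot\nabla I_{t\Lambda}(\Omega_{t\alpha,k})\,d\eta = t^4\int_{D_\alpha}\nabla I_\Lambda(\Omega_{\alpha,j})\cdot\nabla I_\Lambda(\Omega_{\alpha,k})\,d\zeta.
\end{equation*}

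Dividing the full ${\cal J}_{t\Lambda}$ by $|D_{t\alpha}| = t^2$ then produces
\begin{equation*}
\widetilde{\cal J}_{t\Lambda}(\Omega_{t\alpha,1},\Omega_{t\alpha,2}) = \frac{A}{t} + t^2\,{\cal F}(\alpha), \qquad A := 2\bigl(\sqrt{2\pi\omega_1}+\sqrt{2\pi\omega_2}\bigr),
\end{equation*}
with ${\cal F}(\alpha)$ as in \eqref{F}. Under \eqref{gamma}--\eqref{gamma-1} the quadratic form in ${\cal F}(\alpha)$ is non-negative, and strictly positive because $\chi_{\Omega_{\alpha,1}}$, $\chi_{\Omega_{\alpha,2}}$ and $1$ are linearly independent in $L^2(D_\alpha)$. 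Elementary calculus on $h(t) = A/t + {\cal F}(\alpha) t^2$ then gives the unique critical point $t_\alpha = (A/(2{\cal F}(\alpha)))^{1/3}$, which is precisely \eqref{t-alpha}. At $t = t_\alpha$ the stationarity relation $A/t_\alpha = 2 t_\alpha^2 {\cal F}(\alpha)$ yields $h(t_\alpha) = 3 t_\alpha^2 {\cal F}(\alpha) = 3 (A/2)^{2/3} {\cal F}(\alpha)^{1/3}$, which after substituting $A/2 = \sqrt{2\pi\omega_1}+\sqrt{2\pi\omega_2}$ is \eqref{energy-per-cell}. Since $h(t_\alpha)$ is a strictly increasing function of ${\cal F}(\alpha)$ alone, minimization of \eqref{minimization} over all bases reduces to minimization of ${\cal F}$ over unit-cell-area bases, as claimed.

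There is no substantive obstacle; the delicate points are the uniqueness-based identification of $I_{t\Lambda}(\Omega_{t\alpha,j})$ under rescaling, where both the periodicity and the mean-zero normalization must transform correctly, and the mild verification that ${\cal F}(\alpha) > 0$ so that $h$ admits an interior minimizer.
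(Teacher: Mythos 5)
Your proposal is correct and follows essentially the same route as the paper: establish the scaling $I_{t\Lambda}(\Omega_{t\alpha,j})(t\zeta)=t^2 I_\Lambda(\Omega_{\alpha,j})(\zeta)$, deduce that the energy per cell area has the form $A/t + t^2{\cal F}(\alpha)$, and optimize in $t$ by one-variable calculus. The only additions are the explicit uniqueness argument identifying the rescaled potential and the verification that ${\cal F}(\alpha)>0$ (which the paper leaves implicit), both of which are correct and harmless.
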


\begin{proof}
Between the two lattices, the functions
$I_{t\Lambda}(\Omega_{t\alpha,j})$ and $I_{\Lambda}(\Omega_{\alpha,j})$
are related by
\begin{equation}
  \label{scale-I}
  I_{t\Lambda}(\Omega_{t\alpha,j})(\chi) = t^2 I_{\Lambda}(\Omega_{\alpha,j})(\zeta),
  \ \ t \zeta=\chi, \ \zeta, \chi \in \mathbb{C}
\end{equation}
because of the equation \eqref{I}. Then
\begin{align*}
  {\cal J}_{t\Lambda}(\Omega_{t\alpha,1},\Omega_{t \alpha,2} ) & =
  t \Big ( 2 \sqrt{2\pi \omega_1} + 2 \sqrt{2\pi \omega_2} \Big )
  + \sum_{j,k=1}^2 \frac{\gamma_{jk}}{2} \int_{D_{t\alpha}}
  \nabla I_{t\Lambda}(\Omega_{t\alpha,j}) (\chi) \cdot
  \nabla I_{t\Lambda}(\Omega_{t\alpha,k})(\chi) \, d \chi \\
  &=  t \Big ( 2 \sqrt{2\pi \omega_1} + 2 \sqrt{2\pi \omega_2} \Big )
  + t^4 \sum_{j,k=1}^2 \frac{\gamma_{jk}}{2} \int_{D_\alpha}
  \nabla I_{\Lambda}(\Omega_{\alpha,j})(\zeta)
  \cdot \nabla I_{\Lambda}(\Omega_{\alpha, k})(\zeta) \, d \zeta.
  \end{align*}
The energy per cell area is
\begin{align*}
  \nonumber
   \widetilde{\cal J}_{t\Lambda}(\Omega_{t\alpha,1}, \Omega_{t \alpha,2}) & = 
  \Big (\frac{1}{t} \Big )^2 {\cal J}_{t\Lambda} (\Omega_{t\alpha}) \\ & =
   \frac{1}{t} \Big ( 2 \sqrt{2\pi \omega_1} + 2 \sqrt{2\pi \omega_2} \Big )
  + t^2 \sum_{j,k=1}^2 \frac{\gamma_{jk}}{2} \int_{D_\alpha}
  \nabla I_{\Lambda}(\Omega_{\alpha,j})(\zeta)
  \cdot \nabla I_{\Lambda}(\Omega_{\alpha, k})(\zeta) \, d \zeta.
\end{align*}
With respect to $t$, the last quantity is minimized at $t = t_\alpha$
given in \eqref{t-alpha}, and the minimum value is given in
\eqref{energy-per-cell}.

Later one needs to minimize the right side of \eqref{energy-per-cell}
with respect to $\alpha$, $|D_\alpha|=1$. This is equivalent to minimze
${\cal F}(\alpha)$ 
with respect to $\alpha$, $|D_\alpha|=1$. Once a minimum, say $\alpha_\ast$,
is found, then compute $t_{\alpha_\ast}$ from \eqref{t-alpha} and make
the assembly $\Omega_{t_{\alpha_\ast} \alpha_\ast}$ with the basis
$t_{\alpha_\ast} \alpha_\ast$. This assembly minimizes \eqref{minimization}. 
\end{proof}

Now that the minimization problem \eqref{minimization} is reduced to
minimizing ${\cal F}$, we proceed to simplify ${\cal F}(\alpha)$ to a more
amenable form. To this end, one expresses 
the solution of \eqref{I}  in terms of the Green's function
$G_\Lambda$ of the $-\Delta$ operator as
\begin{equation}
  I_{\Lambda}(\Omega_j)(\zeta) = \int_{\Omega_j \cap D_\alpha}
  G_\Lambda(\zeta - \chi) \, d\chi. 
  \label{I2}
  \end{equation}
Here $G_\Lambda$ is the $\Lambda$-periodic solution of
\begin{equation}
  - \Delta G_\Lambda (\zeta) = \sum_{\lambda \in \Lambda}
   \delta_{\lambda} (\zeta) - \frac{1}{|\Lambda|},
  \ \int_{D_\alpha} G_\Lambda(\zeta) \, d\zeta =0. 
  \label{G}
\end{equation}
In \eqref{G} $\delta_\lambda$ is the delta measure at $\lambda$, and
$D_\alpha$ is a parallelogram cell of $\Lambda$. It is known that
\begin{align}
  \nonumber
  G_\Lambda(\zeta) & = \frac{|\zeta|^2}{4 |\Lambda|}
  - \frac{1}{2\pi} \log
  \Big |
  \e\Big (\frac{\zeta^2 \overline{\alpha_1}}{4 i |\Lambda| \alpha_1}
  -\frac{\zeta}{2\alpha_1} +\frac{\alpha_2}{12\alpha_1} \Big )
  \Big (1-\e(\frac{\zeta}{\alpha_1}) \Big ) \\ & \qquad
  \prod_{n=1}^\infty\Big ( \big( 1- \e(n\tau + \frac{\zeta}{\alpha_1}) \big )
  \big (1- \e(n\tau - \frac{\zeta}{\alpha_1}) \big) \Big )
  \Big |  \label{Green0}
\end{align}
A simple proof of this fact can be found in \cite{chenoshita-2}.
Throughout this paper one writes
\begin{equation}
  \e(z) = e^{2\pi i z}
  \label{e}
\end{equation}
and
\begin{equation}
  \tau = \frac{\alpha_2}{\alpha_1}.
  \label{tau}
  \end{equation}
Sometimes one singles out the singularity of $G_\Lambda$ at $0$ and decompose 
$G_\Lambda$ into 
\begin{equation}
  \label{Green}
  G_\Lambda(\zeta) = -\frac{1}{2\pi} \log \frac{2\pi |\zeta|}{\sqrt{|\Lambda|}}
  + \frac{|\zeta|^2}{4 |\Lambda|} + H_\Lambda(\zeta)
\end{equation}
where
\begin{align}
  \nonumber
  H_\Lambda(\zeta) & = - \frac{1}{2\pi} \log
  \Big |
  \e\Big (\frac{\zeta^2 \overline{\alpha_1}}{4 i |\Lambda| \alpha_1}
  -\frac{\zeta}{2\alpha_1} +\frac{\alpha_2}{12\alpha_1} \Big )
  \frac{\sqrt{|\Lambda|}}{2\pi \zeta}
  \Big (1-\e(\frac{\zeta}{\alpha_1}) \Big ) \\ & \qquad
  \prod_{n=1}^\infty\Big ( \big( 1- \e(n\tau + \frac{\zeta}{\alpha_1}) \big )
  \big (1- \e(n\tau - \frac{\zeta}{\alpha_1}) \big) \Big )
  \Big |  \label{H}
\end{align}
is a harmonic function on $(\mathbb{C} \backslash \Lambda)
\cup \{ 0\} $. 

The integral term in \eqref{J} can be written in several different ways: 
\begin{align}
   \int_{D_\Lambda}
  \nabla I_\Lambda(\Omega_j)(\zeta) \cdot \nabla I_\Lambda(\Omega_k)(\zeta)
  \, d\zeta & = \int_{\Omega_k} I_\Lambda(\Omega_j)(\zeta) \, d \zeta
   = \int_{\Omega_j} I_\Lambda(\Omega_k)(\chi) \, d \chi \nonumber \\
   & = \int_{\Omega_k} \int_{\Omega_j} G_\Lambda(\zeta - \chi) \, d\chi d\zeta.
   \label{rewrite}
  \end{align}

\begin{lemma}
  \label{l-FtF}
Minimizing
${\cal F}(\alpha)$  with respect to $\alpha$ of unit cell area
is equivalent to minimizing 
\begin{equation}
  \label{Ftilde}
  \widetilde{\cal F} (\alpha)
  = H_\Lambda(0) + G_\Lambda\Big(\frac{\alpha_1+\alpha_2}{2}\Big)   +
  b \Big( G_\Lambda\Big(\frac{\alpha_1}{2}\Big)
  + G_\Lambda\Big(\frac{\alpha_2}{2}\Big) \Big), \ \ |D_\alpha|=1 
  \end{equation}
where $\Lambda$ is the lattice generated by $\alpha$ and
\begin{equation}
  \label{b}
   b=  \frac{2 \gamma_{12} r_1^2r_2^2}
       { \gamma_{11} r_1^4 + \gamma_{22} r_2^4} 
   =  \frac{2 \gamma_{12} \omega_1\omega_2}
       { \gamma_{11} \omega_1^2 + \gamma_{22} \omega_2^2}.
\end{equation}
\end{lemma}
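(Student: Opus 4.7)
The plan is to recast ${\cal F}(\alpha)$ as an explicit linear combination of values of $G_\Lambda$ and $H_\Lambda$ at half-lattice points, by expanding each bilinear form via \eqref{rewrite} and then evaluating every disc-disc integral that arises in closed form. From \eqref{rewrite},
\begin{equation*}
\int_{D_\alpha}\nabla I_\Lambda(\Omega_{\alpha,j})\cdot\nabla I_\Lambda(\Omega_{\alpha,k})\,d\zeta
= \int_{\Omega_{\alpha,j}\cap D_\alpha}\int_{\Omega_{\alpha,k}\cap D_\alpha} G_\Lambda(\zeta-\chi)\,d\chi\,d\zeta,
\end{equation*}
and each factor $\Omega_{\alpha,j}\cap D_\alpha$ is a union of two discs of radius $r_j$, so the right side splits into a sum of disc-disc integrals indexed by pairs of centers. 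Listing the centers from \eqref{Omegaalpha1}--\eqref{Omegaalpha2} and reducing modulo $\Lambda$, the two intra-species separations both reduce to $(\alpha_1+\alpha_2)/2$, while the four inter-species separations are $\pm\alpha_1/2$ and $\pm\alpha_2/2$. Since $G_\Lambda$ is even and $\Lambda$-periodic, these are exactly the three half-lattice sites at which $G_\Lambda$ is evaluated in $\widetilde{\cal F}$.

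The main calculation is then to evaluate $I(\xi,\xi';r,r'):=\int_{B(\xi,r)}\int_{B(\xi',r')} G_\Lambda(\zeta-\chi)\,d\chi\,d\zeta$ in the two regimes that appear. When $\xi\neq\xi'\bmod\Lambda$, the function $\zeta\mapsto G_\Lambda(\zeta)-|\zeta|^2/(4|\Lambda|)$ is harmonic on $\mathbb{C}\setminus\Lambda$, so iterating the mean value property and adding the explicit quadratic correction yields
\begin{equation*}
I(\xi,\xi';r,r') = \pi^2 r^2(r')^2\, G_\Lambda(\xi-\xi') + R(r,r',|\Lambda|),
\end{equation*}
with a remainder $R$ depending only on $r,r',|\Lambda|$. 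For the self-integral ($\xi=\xi'$, $r=r'$) I would use the decomposition \eqref{Green}: the piece $H_\Lambda$ is harmonic on a neighborhood of $0$ and contributes exactly $(\pi r^2)^2 H_\Lambda(0)$ by the mean value property, while the logarithmic and quadratic pieces integrate to a constant depending only on $r$ and $|\Lambda|$.

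Assembling the contributions--two self-terms plus two intra-species cross-terms in each diagonal block $(j,j)$, and four inter-species cross-terms in the block $(1,2)=(2,1)$--substituting $\pi r_j^2=\omega_j/2$ from \eqref{rj} with $|D_\alpha|=1$, and collecting by geometric type gives
\begin{equation*}
{\cal F}(\alpha) = \frac{\gamma_{11}\omega_1^2+\gamma_{22}\omega_2^2}{4}\Bigl[H_\Lambda(0) + G_\Lambda\Bigl(\tfrac{\alpha_1+\alpha_2}{2}\Bigr)\Bigr] + \frac{\gamma_{12}\omega_1\omega_2}{2}\Bigl[G_\Lambda\Bigl(\tfrac{\alpha_1}{2}\Bigr) + G_\Lambda\Bigl(\tfrac{\alpha_2}{2}\Bigr)\Bigr] + C,
\end{equation*}
where $C$ pools all the $\alpha$-independent remainders. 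Factoring out $(\gamma_{11}\omega_1^2+\gamma_{22}\omega_2^2)/4$ reproduces $\widetilde{\cal F}(\alpha)$ with $b$ precisely as in \eqref{b}, yielding the asserted equivalence.

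The main obstacle is the bookkeeping of the mean-value remainders and confirming that all of them are independent of $\alpha$ once the normalization $|D_\alpha|=1$ is imposed; this, together with the validity of the mean value steps, rests on the discs being disjoint from every one of their lattice translates, which is guaranteed by the smallness hypothesis \eqref{omega0}.
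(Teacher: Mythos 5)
Your proposal is correct and follows essentially the same route as the paper: expand ${\cal F}(\alpha)$ via \eqref{rewrite} into disc--disc integrals, evaluate them with the mean value property using the decomposition \eqref{Green}, reduce the center separations modulo $\Lambda$ to $\frac{\alpha_1+\alpha_2}{2}$, $\pm\frac{\alpha_1}{2}$, $\pm\frac{\alpha_2}{2}$, and factor out $\gamma_{11}\pi^2 r_1^4+\gamma_{22}\pi^2 r_2^4$ to obtain $b$ as in \eqref{b}. The only cosmetic difference is that you subtract the quadratic part of $G_\Lambda$ before applying the mean value property to the cross terms, whereas the paper first computes $I_\Lambda(B(\xi_j,r_j))$ explicitly in \eqref{I-B}; both yield the same $\alpha$-independent remainders $c_{jk}$, $c_{jk}'$.
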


\begin{proof}
Given a disc $B(\xi_j, r_j)$ one finds
\begin{align}
  \label{I-B} \nonumber
  I_\Lambda(B(\xi_j,r_j))(\zeta) &= 
  \left \{ \begin{array}{ll}
    - \frac{|\zeta - \xi_j|^2}{4} +\frac{r_j^2}{4} - \frac{r_j^2}{2} \log r_j, 
        & \mbox{if} \ \ |\zeta - \xi_j|\in [0, r_j], \\
   -\frac{r_j^2}{2} \log |\zeta -\xi_j|, & \mbox{if} \ \ |\zeta-\xi_j| > r_j,
  \end{array} \right.  \\
  & \qquad - \frac{r_j^2}{2} \log \frac{2\pi}{\sqrt{|\Lambda|}} 
   + \frac{1}{4|\Lambda|}  \Big (
  \pi r_j^2 |\zeta - \xi_j|^2 + \frac{\pi r_j^4}{2} \Big )
  + \pi r_j^2 H_\Lambda(\zeta - \xi_j) 
\end{align}
by \eqref{Green} and the mean value property of the harmonic function
$H_\Lambda$.
Then
\begin{align}
  \label{Bj-Bj}
  \int_{B(\xi_j, r_j)} \int_{B(\xi_j,r_j)} G_\Lambda(\zeta - \chi)
    \, d\chi d\zeta 
   &= \int_{B(\xi_j, r_j)} I_\Lambda(B(\xi_j,r_j))(\zeta) \, d \zeta \\
   &= \pi^2 r_j^4 H_\Lambda(0) + \frac{\pi r_j^4}{8} - \frac{\pi r_j^4}{2}
   \log \frac{2\pi r_j}{ \sqrt{|\Lambda|}}
    + \frac{\pi^2 r_j^6}{4|\Lambda|}.  \nonumber 
\end{align}
When $j\ne k$,
\begin{align}
  \label{Bj-Bk}
  \int_{B(\xi_k, r_k)} \int_{B(\xi_j,r_j)} G_\Lambda(\zeta - \chi)
    \, d\chi d\zeta 
   &= \int_{B(\xi_k, r_k)} I_\Lambda(B(\xi_j,r_j))(\zeta) \, d \zeta \\
   &= \pi^2 r_j^2 r_k^2 G_\Lambda(\xi_j-\xi_k)
   + \frac{\pi^2(r_j^2 r_k^4 + r_j^4 r_k^2)}{8 |\Lambda|}. \nonumber
\end{align}

Since only the role played by the lattice basis $\alpha$ is of interest,
let
\begin{equation}
  \label{c}
   c_{jj} = \frac{\pi r_j^4}{8} - \frac{\pi r_j^4}{2}
   \log \frac{2\pi r_j}{ \sqrt{|\Lambda|}}
   + \frac{\pi^2 r_j^6}{4|\Lambda|}, \ \
   c_{jk} =  \frac{\pi^2(r_j^2 r_k^4 + r_j^4 r_k^2)}{8 |\Lambda|}, \ j\ne k
  \end{equation}
which are independent of $\alpha$ when $|\Lambda|=1$. Then
\begin{align}
  \int_{B(\xi_j, r_j)} \int_{B(\xi_j,r_j)} G_\Lambda(\zeta - \chi)
  \, d\chi d\zeta
  &= \pi^2 r_j^4 H_\Lambda(0) + c_{jj} \\
  \int_{B(\xi_k, r_k)} \int_{B(\xi_j,r_j)} G_\Lambda(\zeta - \chi)
  \, d\chi d\zeta
  &= \pi^2 r_j^2 r_k^2 G_\Lambda(\xi_j-\xi_k) + c_{jk}.
  \end{align}
Similarly,
\begin{align}
  \int_{B(\xi_j', r_j)} \int_{B(\xi_j',r_j)} G_\Lambda(\zeta - \chi)
  \, d\chi d\zeta
  &= \pi^2 r_j^4 H_\Lambda(0) + c_{jj} \\
  \int_{B(\xi_k', r_k)} \int_{B(\xi_j',r_j)} G_\Lambda(\zeta - \chi)
  \, d\chi d\zeta
  &= \pi^2 r_j^2 r_k^2 G_\Lambda(\xi_j'-\xi_k') + c_{jk}, \ \ j \ne k \\
  \int_{B(\xi_k, r_k)} \int_{B(\xi_j',r_j)} G_\Lambda(\zeta - \chi)
  \, d\chi d\zeta
  &= \pi^2 r_j^2 r_k^2 G_\Lambda(\xi_j-\xi_k') + c_{jk}',
  \ \ j,k=1,2  \label{Bk-Bj'}
\end{align}
where
\begin{equation}
  \label{c'}
  c_{jk}' = \frac{\pi^2(r_j^2 r_k^4 + r_j^4 r_k^2)}{8 |\Lambda|}, \ \
  j,k =1,2.
  \end{equation}
Note that in \eqref{Bk-Bj'} and \eqref{c'} $j$ may be equal to $k$.

To complete the computation, note that
\begin{align*}
  \int_{\Omega_{\alpha,1}}\int_{\Omega_{\alpha,1}} G_\Lambda(\zeta -\chi) \, d\chi d\zeta
  &=  \int_{B(\xi_1,r_1) \cup B(\xi_1',r_1)}  \int_{B(\xi_1,r_1) \cup B(\xi_1',r_1)}
  G_\Lambda(\zeta -\chi) \, d\chi d\zeta \\
  &= \int_{B(\xi_1,r_1)}  \int_{B(\xi_1,r_1)}
  G_\Lambda(\zeta -\chi) \, d\chi d\zeta
  + \int_{B(\xi_1',r_1)}  \int_{B(\xi_1',r_1)}
  G_\Lambda(\zeta -\chi) \, d\chi d\zeta \\
  & \qquad + \int_{B(\xi_1,r_1)}  \int_{B(\xi_1',r_1)}
  G_\Lambda(\zeta -\chi) \, d\chi d\zeta
  + \int_{B(\xi_1',r_1)}  \int_{B(\xi_1,r_1)}
  G_\Lambda(\zeta -\chi) \, d\chi d\zeta \\
  &= 2(\pi^2 r_1^4 H_\Lambda(0) + c_{11})
  + 2(\pi^2 r_1^4 G_\Lambda(\xi_1-\xi_1') + c_{11}') \\
 \int_{\Omega_{\alpha,2}}\int_{\Omega_{\alpha,2}} G_\Lambda(\zeta -\chi) \, d\chi d\zeta
  &=  2(\pi^2 r_2^4 H_\Lambda(0) + c_{22})
 + 2(\pi^2 r_2^4 G_\Lambda(\xi_2-\xi_2') + c_{22}') \\
 \int_{\Omega_{\alpha,1}}\int_{\Omega_{\alpha,2}} G_\Lambda(\zeta -\chi) \, d\chi d\zeta
 &= \pi^2 r_1^2 r_2^2 G_\Lambda(\xi_1-\xi_2) + c_{12}
   + \pi^2 r_1^2 r_2^2 G_\Lambda(\xi_1'-\xi_2') + c_{12} \\
  &\qquad + \pi^2 r_1^2 r_2^2 G(\xi_1-\xi_2') + c'_{12} + \pi^2 r_1^2 r_2^2 G(\xi_1'-\xi_2) + c'_{12}
\end{align*}
In accordance to \eqref{Omegaalpha1} and \eqref{Omegaalpha2} choose
\begin{align}
 \xi_1 &= \frac{3}{4} \alpha_1 + \frac{1}{4} \alpha_2, \ \xi_1'=\frac{1}{4} \alpha_1 + \frac{3}{4} \alpha_2 \\
 \xi_2 &= \frac{1}{4} \alpha_1 + \frac{1}{4} \alpha_2, \ \xi_1'=\frac{1}{4} \alpha_1 + \frac{3}{4} \alpha_2
\end{align}
to derive
\begin{align*}
  {\cal F}(\alpha) &= \sum_{j,k=1,2} \frac{\gamma_{jk}}{2}
  \int_{\Omega_{\alpha,j}}\int_{\Omega_{\alpha,k}}
  G_\Lambda(\zeta -\chi) \, d\chi d\zeta \\
  &= \gamma_{11} \Big (  \pi^2 r_1^4 H_\Lambda(0) + 
  + \pi^2 r_1^4 G_\Lambda(\frac{\alpha_1-\alpha_2}{2}) + c_{11} + c_{11}' \Big ) \\
  & \qquad + \gamma_{22} \Big (  \pi^2 r_2^4 H_\Lambda(0) + 
  + \pi^2 r_2^4 G_\Lambda(\frac{\alpha_1+\alpha_2}{2}) + c_{22} + c_{22}' \Big ) \\
  & \qquad + 2 \gamma_{12} \Big (  \pi^2 r_1^2r_2^2
  \Big (G_\Lambda(\frac{\alpha_1}{2}) + G_\Lambda(\frac{\alpha_2}{2}) \Big )
  + c_{12} + c_{12}' \Big ) \\
  &= (\gamma_{11} \pi^2 r_1^4 + \gamma_{22} \pi^2 r_2^4)
  \Big ( H_\Lambda(0) + G_\Lambda(\frac{\alpha_1+\alpha_2}{2}) \Big )
  + 2 \gamma_{12} \pi^2 r_1^2 r_2^2 \Big ( G_\Lambda(\frac{\alpha_1}{2})
  + G_\Lambda(\frac{\alpha_2}{2}) \Big ) \\
  & \qquad + \gamma_{11}(c_{11} + c_{11}') + \gamma_{22} (c_{22}+c_{22}')
  + 2 \gamma_{12} (c_{12} + c_{12}') \\
  &= (\gamma_{11} \pi^2 r_1^4 + \gamma_{22} \pi^2 r_2^4)
  \Big [ H_\Lambda(0) + G_\Lambda(\frac{\alpha_1+\alpha_2}{2})   +
  \frac{2 \gamma_{12} r_1^2r_2^2}
       { \gamma_{11} r_1^4 + \gamma_{22} r_2^4} 
        \Big ( G_\Lambda(\frac{\alpha_1}{2})
        + G_\Lambda(\frac{\alpha_2}{2}) \Big ) \Big ] \\
        & \qquad +  \gamma_{11}(c_{11} + c_{11}') + \gamma_{22} (c_{22}+c_{22}')
  + 2 \gamma_{12} (c_{12} + c_{12}').
\end{align*}
Here 
$G_\Lambda(\frac{\alpha_1+\alpha_2}{2}) = G_\Lambda(\frac{\alpha_1-\alpha_2}{2})$
follows from the $\Lambda$-periodicity of $G_\Lambda$.
\end{proof}

Calculations based on \eqref{H} show
\begin{align}
  \label{H0} 
  H_\Lambda(0) &= - \frac{1}{2\pi} \log \Big | \sqrt{\im \tau}
  \e (\frac{\tau}{12}) \prod_{n=1}^\infty (1- \e(n \tau))^2 \Big | \\
  \label{Galpha1alpha2}
  G_\Lambda(\frac{\alpha_1}{2}+\frac{\alpha_2}{2}) &= 
  -\frac{1}{2\pi} \log \Big | \e(-\frac{\tau}{24}) \prod_{n=1}^\infty
  \Big (1 + \e \big ((n-\frac{1}{2}) \tau) \Big )^2  \Big |
  \\ \label{Galpha1}
  G_\Lambda(\frac{\alpha_1}{2}) &= - \frac{1}{2\pi}
  \log \Big | 2  \e(\frac{\tau}{12}) \prod_{n=1}^\infty
  (1+ \e(n\tau))^2  \Big | \\
  \label{Galpha2}
  G_\Lambda(\frac{\alpha_2}{2}) &= 
  -\frac{1}{2\pi} \log \Big |  \e(-\frac{\tau}{24}) \prod_{n=1}^\infty
  \Big (1- \e \big ((n-\frac{1}{2})\tau \big ) \Big)^2 \Big |. 
\end{align}
To derive \eqref{H0} we have used $\frac{1}{|\alpha_1|} = \sqrt{\im \tau}$,
which follows from $1=|D_\Lambda|= \im (\overline{\alpha_1} \alpha_2)$. 

Regarding the four infinite products in \eqref{H0} through \eqref{Galpha2},
one has the following formulas.

\begin{lemma}
  \label{l-formulae}
  \begin{align*}
  \prod_{n=1}^\infty (1- \e(n \tau)) 
   \prod_{n=1}^\infty
  \Big (1 + \e \big ((n-\frac{1}{2}) \tau) \Big )
   = \prod_{n=1}^\infty \Big ( 1- \e(n \frac{\tau +1}{2}) \Big )
   \\ 
   \prod_{n=1}^\infty
   \Big (1 + \e \big ((n-\frac{1}{2}) \tau) \Big )
    \prod_{n=1}^\infty (1+ \e(n \tau)) 
   \prod_{n=1}^\infty
   \Big (1 - \e \big ((n-\frac{1}{2}) \tau) \Big )
   = 1.
   \end{align*}
  \end{lemma}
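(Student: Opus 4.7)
The plan is to reduce both identities to elementary $q$-series manipulations by setting $q = \e(\tau) = e^{2\pi i \tau}$. Since $\im \tau > 0$, we have $|q| < 1$, so all the infinite products converge absolutely and may be freely rearranged. With this substitution, $\e(n\tau) = q^n$, $\e((n-\frac{1}{2})\tau) = q^{n-1/2}$, and — crucially for the first identity — $\e(n\,\frac{\tau+1}{2}) = e^{\pi i n}\, e^{\pi i n \tau} = (-1)^n q^{n/2}$.

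For the first identity, the right-hand side becomes $\prod_{n=1}^\infty (1-(-1)^n q^{n/2})$. I would split the product by parity of $n$: writing $n=2m$ gives factors $1-q^m$, and writing $n=2m-1$ gives factors $1+q^{m-1/2}$. This yields
\begin{equation*}
\prod_{n=1}^\infty \bigl(1 - (-1)^n q^{n/2}\bigr) = \prod_{m=1}^\infty (1-q^m)\,\prod_{m=1}^\infty (1+q^{m-1/2}),
\end{equation*}
which is exactly the left-hand side.

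For the second identity, I would first pair the first and third products factor by factor, using the difference of squares
\begin{equation*}
\bigl(1+q^{n-1/2}\bigr)\bigl(1-q^{n-1/2}\bigr) = 1 - q^{2n-1}.
\end{equation*}
The triple product then collapses to $\prod_{n=1}^\infty (1-q^{2n-1})\prod_{n=1}^\infty (1+q^n)$, which is a classical Euler identity equal to $1$. To make this step self-contained I would include the one-line derivation
\begin{equation*}
\prod_{n=1}^\infty (1+q^n) = \prod_{n=1}^\infty \frac{1-q^{2n}}{1-q^n} = \frac{1}{\prod_{n=1}^\infty (1-q^{2n-1})},
\end{equation*}
where the second equality comes from splitting $\prod(1-q^n)$ into even and odd indices.

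There is no real obstacle here: both identities are standard $q$-product rearrangements, and the only subtlety is the extraction of the sign $(-1)^n$ from $\e(n(\tau+1)/2)$, which must be done carefully but is routine. The result of this lemma will be used to combine the four infinite products appearing in \eqref{H0}--\eqref{Galpha2} into the more compact Dedekind-eta expressions that enter the formula \eqref{fb-intro} for $f_b$.
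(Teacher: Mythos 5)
Your proposal is correct and uses essentially the same argument as the paper: the first identity is the same parity splitting of $\prod_n(1-\e(n\frac{\tau+1}{2}))$ (the paper just reads it left to right), and your appeal to Euler's identity $\prod(1+q^n)\prod(1-q^{2n-1})=1$, proved by splitting $\prod(1-q^n)$ into even and odd indices, is exactly the paper's device of multiplying through by $\prod(1-\e(n\tau))$ and dividing it back out. The sign extraction $\e(n\frac{\tau+1}{2})=(-1)^n q^{n/2}$ is handled correctly.
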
 

\begin{proof}
  To prove the first formula, rewrite and rearrange the terms as follows.
  \begin{align*}
     & \prod_{n=1}^\infty (1- \e(n \tau)) 
   \prod_{n=1}^\infty
   \Big (1 + \e \big ((n-\frac{1}{2}) \tau) \Big ) \\
   & = \prod_{n=1}^\infty \Big ( 1- \e(2n \frac{\tau+1}{2}) \Big )
   \prod_{n=1}^\infty \Big ( 1- \e((2n-1) \frac{\tau+1}{2}) \Big ) \\
   & = \prod_{n=1}^\infty \Big ( 1- \e(
   n \frac{\tau+1}{2}) \Big ).
    \end{align*}
  For the second formula, consider
  \begin{align*}
    & \prod_{n=1}^\infty (1-\e(n\tau))
     \prod_{n=1}^\infty
   \Big (1 + \e \big ((n-\frac{1}{2}) \tau) \Big )
    \prod_{n=1}^\infty (1+ \e(n \tau)) 
   \prod_{n=1}^\infty
   \Big (1 - \e \big ((n-\frac{1}{2}) \tau) \Big ) \\
   & =  \prod_{n=1}^\infty (1-\e(2n\tau))
    \prod_{n=1}^\infty
    \Big (1 - \e \big ((2n-1) \tau) \Big ) \\
   & = \prod_{n=1}^\infty (1-\e(n\tau)).
  \end{align*}
  The second formula follows after one divides out
  $\prod_{n=1}^\infty (1-\e(n\tau))$.
  \end{proof}

These identities will allow us to further simplify
$\widetilde{\cal F}(\alpha)$. Let
\begin{equation}
  \mathbb{H} = \{ z \in \mathbb{C}: \ \im z >0 \}
  \label{mathH}
\end{equation}
be the upper half of the complex plane. Define
\begin{equation}
  f_b (z) = b \log \big | \im \big (z \big ) \eta\big (z \big) \big|
  + (1-b) \log \big | \im \big(\frac{z+1}{2}\big)
  \eta \big(\frac{z+1}{2}\big) \big |, \ z \in \mathbb{H}, 
  \label{fb}
\end{equation}
where
\begin{equation}
  \label{eta}
  \eta(z) = \e \Big(\frac{z}{6} \Big) \prod_{n=1}^\infty\big(1- \e(nz)\big)^4.
\end{equation}
One often writes $f_b$ as 
\begin{equation}
  \label{fb-2}
  f_b(z) = b f_1(z) + (1-b) f_0(z),
  \end{equation}
where
\begin{equation}
  \label{f1f0}
  f_1(z) = \log |\im(z) \eta(z)|, \ \ f_0(z) = \log
  |\im(\frac{z+1}{2}) \eta (\frac{z+1}{2})|. 
\end{equation}
  
\begin{lemma}
  \label{l-Ff}
  Minimizing $\widetilde{\cal F}(\alpha)$ with respect to
  $\alpha$ of unit cell area is equivalent to maximizing
  $f_b(z)$ with respect to $z$ in $\mathbb{H}$. 
  \end{lemma}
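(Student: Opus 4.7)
The plan is to substitute the explicit formulas \eqref{H0}--\eqref{Galpha2} into the definition \eqref{Ftilde} of $\widetilde{\cal F}(\alpha)$ and use the product identities of Lemma \ref{l-formulae} to collapse the result down to $-\frac{1}{4\pi}f_b(\tau)$ plus an additive constant that does not depend on $\tau$. Before doing the computation I would note two preliminaries: on the slice $|D_\alpha|=1$ one has $|\alpha_1|^{-2} = \im\tau$, so each of \eqref{H0}--\eqref{Galpha2} depends on $\alpha$ only through $\tau = \alpha_2/\alpha_1$; and every $\tau\in\mathbb{H}$ is realized by some admissible unit-area basis (take $\alpha_1 = 1/\sqrt{\im\tau}$, $\alpha_2 = \tau/\sqrt{\im\tau}$). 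Hence minimizing $\widetilde{\cal F}$ over unit-area $\alpha$ is the same as minimizing the induced function of $\tau$ over $\mathbb{H}$.

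First I would handle the $b$-free piece $H_\Lambda(0) + G_\Lambda\bigl(\tfrac{\alpha_1+\alpha_2}{2}\bigr)$. Adding \eqref{H0} and \eqref{Galpha1alpha2}, the scalar prefactors combine to $\sqrt{\im\tau}\,\e(\tau/24)$, and the first identity of Lemma \ref{l-formulae} telescopes $\prod_{n\ge 1}(1-\e(n\tau))(1+\e((n-\tfrac12)\tau))$ into $\prod_{n\ge 1}(1-\e(n\tfrac{\tau+1}{2}))$. Matching the resulting expression against $|\im(\tfrac{\tau+1}{2})\,\eta(\tfrac{\tau+1}{2})|^{1/2}$, using $\im(\tfrac{\tau+1}{2}) = \tfrac12\im\tau$ and $|\e(\tau/24)|^2 = |\e(\tfrac{\tau+1}{12})|$, yields
\begin{equation}
H_\Lambda(0) + G_\Lambda\bigl(\tfrac{\alpha_1+\alpha_2}{2}\bigr) = -\tfrac{1}{4\pi}\,f_0(\tau) - \tfrac{\log 2}{4\pi}.
\end{equation}
Next I would treat $G_\Lambda\bigl(\tfrac{\alpha_1}{2}\bigr)+G_\Lambda\bigl(\tfrac{\alpha_2}{2}\bigr)$. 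Adding \eqref{Galpha1} and \eqref{Galpha2}, the prefactor is $2\,\e(\tau/24)$ and the remaining product is $\prod_{n\ge 1}(1+\e(n\tau))(1-\e((n-\tfrac12)\tau))$. The second identity of Lemma \ref{l-formulae} rewrites this as $1/\prod(1+\e((n-\tfrac12)\tau))$, and the first identity then expresses the denominator as $\prod(1-\e(n\tfrac{\tau+1}{2}))/\prod(1-\e(n\tau))$, so the combined product equals $\prod(1-\e(n\tau))/\prod(1-\e(n\tfrac{\tau+1}{2}))$. Taking absolute values and comparing with $|\eta(\tau)|^{1/2}/|\eta(\tfrac{\tau+1}{2})|^{1/2}$ produces
\begin{equation}
G_\Lambda\bigl(\tfrac{\alpha_1}{2}\bigr)+G_\Lambda\bigl(\tfrac{\alpha_2}{2}\bigr) = \tfrac{1}{4\pi}\bigl[f_0(\tau) - f_1(\tau)\bigr] - \tfrac{\log 2}{4\pi}.
\end{equation}

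Combining the two equations above with weight $b$ on the second, as in \eqref{Ftilde}, and invoking the decomposition \eqref{fb-2} gives $\widetilde{\cal F}(\alpha) = -\tfrac{1}{4\pi}f_b(\tau) - \tfrac{(1+b)\log 2}{4\pi}$, from which the stated equivalence follows at once. The one delicate point in the argument is the bookkeeping of the exponential prefactors $\e(\tau/12)$ and $\e(-\tau/24)$ from \eqref{H0}--\eqref{Galpha2} against the $\e(z/6)$ buried inside $\eta$ via \eqref{eta}: a missed factor of two in any of these exponents would spoil the clean identification, so I would verify carefully that $|\e(\tau/24)|^2 = |\e(\tfrac{\tau+1}{12})|$ and that the scalar ``$2$'' appearing in \eqref{Galpha1} contributes only to the $\tau$-independent constant.
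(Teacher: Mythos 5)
Your proposal is correct and follows essentially the same route as the paper's own proof: both substitute the explicit expressions \eqref{H0}--\eqref{Galpha2} into \eqref{Ftilde}, apply the two product identities of Lemma \ref{l-formulae} to collapse the $b$-free piece to $-\tfrac{1}{4\pi}f_0(\tau)-\tfrac{\log 2}{4\pi}$ and the $b$-weighted piece to $\tfrac{1}{4\pi}[f_0(\tau)-f_1(\tau)]-\tfrac{\log 2}{4\pi}$, and conclude $\widetilde{\cal F}(\alpha)=-\tfrac{1}{4\pi}f_b(\tau)-\tfrac{(1+b)\log 2}{4\pi}$. Your added remark that every $\tau\in\mathbb{H}$ arises from a unit-area basis is a harmless (and correct) explicit justification of a point the paper leaves implicit.
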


\begin{proof}
By the first formula in Lemma \ref{l-formulae},
\begin{align}
  H_\Lambda(0) + G_\Lambda(\frac{\alpha_1 + \alpha_2}{2}) 
  & = - \frac{1}{2\pi} \log \Big | \sqrt{\im \tau}
  \e(\frac{\tau}{24}) \prod_{n=1}^\infty
  \Big ( 1- \e(n \frac{\tau +1}{2}) \Big )^2
  \Big | \nonumber \\ \label{T1}
  & = -\frac{1}{4\pi} \log | \im (\frac{\tau+1}{2})
  \eta(\frac{\tau+1}{2}) \Big | - \frac{1}{4\pi} \log 2.
\end{align}
Using both formulas in Lemma \ref{l-formulae}, one deduces 
\begin{align}
  G_\Lambda(\frac{\alpha_1}{2}) + G_\Lambda(\frac{\alpha_2}{2}) 
  & = - \frac{1}{2\pi} \log \Big | 2 
  \e(\frac{\tau}{24}) \prod_{n=1}^\infty
  ( 1 + \e(n \tau) )^2
   \prod_{n=1}^\infty
  \Big ( 1 - \e((n-\frac{1}{2}) \tau) \Big )^2
  \Big | \nonumber \\ \nonumber
  & = - \frac{1}{2\pi} \log \Big | 2 
  \e(\frac{\tau}{24}) \prod_{n=1}^\infty
  ( 1 - \e(n \tau) )^2
   \Big / \prod_{n=1}^\infty
  \Big ( 1 - \e(n \frac{\tau+1}{2}) \Big )^2
  \Big | \\ \label{T2}
  &= -\frac{1}{4\pi} \Big ( \log |\im (\tau) \eta (\tau)|
  -  \log \Big |\im (\frac{\tau+1}{2}) \eta (\frac{\tau+1}{2}) \Big | \Big  )
  -\frac{1}{4\pi} \log 2.
\end{align}
By \eqref{T1} and \eqref{T2}, $\widetilde{\cal F}(\Lambda)$
of \eqref{Ftilde} is reduced to
\begin{equation}
  \widetilde{\cal F}(\Lambda) = -\frac{1}{4\pi}
  \Big (
  b  \log |\im (\tau) \eta (\tau)| +
  (1-b)  \log \Big |\im (\frac{\tau+1}{2}) \eta (\frac{\tau+1}{2}) \Big |
  \Big ) - \frac{1+b}{4\pi} \log 2,
  \end{equation}
from which the lemma follows.
\end{proof}

\section{Duality property of $f_b$}
\setcounter{equation}{0}

The function $\eta$ in the definition of $f_b$ satisfies two functional
equations.
\begin{lemma}
  \label{l-func-eq}
  For all $z \in \mathbb{H}$, 
  \begin{align}
    \label{inv-1} \eta(z+1) & =e^{\frac{2\pi i}{6}}\eta(z), \\
    \label{inv-2}  \eta\Big (-\frac{1}{z} \Big ) & = -z^2 \eta(z).
    \end{align}
  \end{lemma}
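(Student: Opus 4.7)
The plan is to prove the two identities separately. The first, $\eta(z+1)=e^{2\pi i/6}\eta(z)$, is a direct substitution into \eqref{eta}: the prefactor transforms by $\e((z+1)/6)=e^{2\pi i/6}\,\e(z/6)$, while every factor of the infinite product is unchanged because $1-\e(n(z+1))=1-\e(nz)\,\e(n)=1-\e(nz)$ for $n\in\mathbb{Z}$. Collecting the constant prefactor gives the claim.

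For the second identity I would reduce it to the classical modular transformation of the Dedekind eta function. Setting $\eta_D(z)=\e(z/24)\prod_{n\ge 1}(1-\e(nz))$, the $\eta$ of this paper equals $\eta_D^{\,4}$. The classical identity $\eta_D(-1/z)=\sqrt{-iz}\,\eta_D(z)$, with the principal branch of the square root, then yields
\[
\eta(-1/z)=\eta_D(-1/z)^4=(-iz)^2\,\eta_D(z)^4=-z^2\,\eta(z),
\]
which is \eqref{inv-2}. Raising to the fourth power is exactly what makes the branch ambiguity in $\sqrt{-iz}$ disappear, so no extra care is needed at this stage.

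The genuine work is therefore to establish $\eta_D(-1/z)=\sqrt{-iz}\,\eta_D(z)$, which I expect to be the main obstacle. My preferred route is logarithmic differentiation. A short computation with the Lambert series gives
\[
\frac{\eta_D'(z)}{\eta_D(z)}=\frac{\pi i}{12}\,E_2(z),
\]
where $E_2(z)=1-24\sum_{n\ge 1}\sigma_1(n)\e(nz)$ is the quasi-modular Eisenstein series of weight $2$. Combined with the classical quasi-modularity
\[
E_2(-1/z)=z^2 E_2(z)+\frac{6z}{\pi i},
\]
integration gives $\log\eta_D(-1/z)-\log\eta_D(z)=\tfrac12\log(-iz)+C$. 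The constant $C$ is then pinned down by evaluating at the fixed point $z=i$, where $-1/i=i$ and $\sqrt{-i\cdot i}=1$ force $C=0$.

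An alternative route, useful if one prefers to avoid $E_2$, is to rewrite $\eta_D$ via the Jacobi triple product as a theta series and apply Poisson summation, which delivers the $\sqrt{-iz}$ factor directly with the correct branch; a third option is Siegel's short contour-integration proof. In any case, once $\eta_D(-1/z)=\sqrt{-iz}\,\eta_D(z)$ is in hand, \eqref{inv-2} follows at once by raising to the fourth power as above.
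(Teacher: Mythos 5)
Your proof is correct and follows essentially the same route as the paper: both identities are reduced to the classical functional equations of the Dedekind eta function $\eta_D$ via $\eta=\eta_D^4$, with $(-iz)^2=-z^2$ eliminating the branch issue. The only difference is that the paper simply cites the transformation law $\eta_D(-1/z)=\sqrt{-iz}\,\eta_D(z)$ from Apostol, whereas you additionally sketch a (correct) proof of it via the quasi-modularity of $E_2$; that extra work is sound but not needed for the paper's purposes.
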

\begin{proof}
  The function $\eta$ in \eqref{eta} is the fourth
  power of the Dedekind eta function which is
\[ \eta_D(z) =  \e\Big (\frac{z}{24} \Big)
   \prod_{n=1}^\infty\big(1- e^{2\pi i n z}\big) \]
so
   \[ \eta(z) = \eta_{D}^4(z), \ \ z \in \mathbb{H}. \]
   For the Dedekind eta function, it is known \cite[Chapter 2]{apostol-2} that
   \begin{align}
    \eta_D(z+1) & = e^{\frac{2\pi i }{24}} \eta_D (z) \\
    \eta_D \Big (-\frac{1}{z} \Big) &= \sqrt{-i z} \, \eta_D(z)
   \end{align}
   where $\sqrt{\cdot }$ stands for the principal branch of squaure root.
\end{proof}

These functional equations lead to invariance properties.
\begin{lemma}
  \label{l-invariance}
  \begin{enumerate}
  \item $|\im (z) \eta(z) |$, and consequently $f_1(z)$,
    are invariant under the transforms
   \[ z \rightarrow z+1, \ \ \mbox{and} \ \ z \rightarrow -\frac{1}{z}.\]
 \item $| \im \big(\frac{z+1}{2}\big) \eta \big(\frac{z+1}{2}\big) |$,
   and consequently $f_0(z)$, are invariant under the transforms
   \[ z \rightarrow z+2, \ \ \mbox{and} \ \ z \rightarrow -\frac{1}{z}.\] 
    \end{enumerate}
  \end{lemma}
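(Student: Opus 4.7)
The plan is to derive both parts by reducing everything to the two functional equations of Lemma \ref{l-func-eq}, which generate the full modular invariance of $|\im(z)\eta(z)|$.

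For part 1, the argument is essentially a one-line check. Under $z \mapsto z+1$, equation \eqref{inv-1} gives $|\eta(z+1)|=|\eta(z)|$ (the phase factor $e^{2\pi i/6}$ drops out under the absolute value) while $\im(z+1)=\im(z)$, so the product is unchanged. Under $z \mapsto -1/z$, equation \eqref{inv-2} gives $|\eta(-1/z)|=|z|^2|\eta(z)|$, while a direct computation (multiplying $-1/z$ by $\bar z/\bar z$) yields $\im(-1/z)=\im(z)/|z|^2$; the two factors $|z|^{\pm 2}$ cancel. Invariance of $f_1$ follows because $\log|\cdot|$ is a monotone function.

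For part 2, I would first handle $z\mapsto z+2$ by the substitution $w=(z+1)/2$, under which the map becomes $w\mapsto w+1$, so invariance is inherited from part 1. The substantive step is the transform $z\mapsto -1/z$, which I would handle by translating it into a transformation on $w=(z+1)/2$. Writing $z=2w-1$, a short calculation gives
\begin{equation}
\frac{(-1/z)+1}{2}=\frac{z-1}{2z}=\frac{w-1}{2w-1}.
\end{equation}
The associated matrix $\bigl(\begin{smallmatrix}1&-1\\2&-1\end{smallmatrix}\bigr)$ has determinant $1$, so this is a modular transformation. Since the transforms $w\mapsto w+1$ and $w\mapsto -1/w$ from part 1 generate $SL_2(\mathbb{Z})$, part 1 actually yields invariance of $|\im(w)\eta(w)|$ under every element of $SL_2(\mathbb{Z})$, and in particular under $w\mapsto (w-1)/(2w-1)$. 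Applied to $w=(z+1)/2$, this gives exactly the required invariance of $|\im((z+1)/2)\eta((z+1)/2)|$, and hence of $f_0$.

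The only genuine subtlety is the $z\mapsto -1/z$ case of part 2, where one must notice that the induced action on the shifted variable $w$ lies in $SL_2(\mathbb{Z})$; one could alternatively verify the identity by direct computation using \eqref{inv-1} and \eqref{inv-2} a few times, but invoking the group structure is cleaner and avoids tracking phase factors through successive substitutions.
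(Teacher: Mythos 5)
Your proposal is correct, and for parts 1 and the $z\mapsto z+2$ case of part 2 it coincides with the paper's proof (both reduce everything to the two functional equations \eqref{inv-1} and \eqref{inv-2}). The only place where you diverge is the $z\mapsto -1/z$ case of part 2, and even there the difference is one of packaging rather than substance. You identify the induced map on $w=(z+1)/2$ as the single modular transformation $w\mapsto\frac{w-1}{2w-1}$ and invoke the standard theorem that $w\mapsto w+1$ and $w\mapsto -1/w$ generate the modular group, so that invariance under the two generators gives invariance under every element. The paper instead avoids citing the generation theorem by exhibiting an explicit word in the generators: it transforms $\frac{(-1/z)+1}{2}=\frac{z-1}{2z}$ into $\frac{z+1}{2}$ by applying $w\mapsto w-1$, $w\mapsto -1/w$, $w\mapsto w-2$, $w\mapsto -1/w$ in succession (equation \eqref{2-2}), which is precisely a factorization of your matrix $\bigl(\begin{smallmatrix}1&-1\\2&-1\end{smallmatrix}\bigr)$ into the generators. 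Your route is shorter and imports a well-known external fact; the paper's is self-contained and makes the decomposition explicit. Both are valid, and your determinant check correctly certifies that the induced map really does lie in the modular group. One tiny quibble: invariance of $f_1$ follows simply because $\log|\cdot|$ is a function of the invariant quantity; monotonicity is irrelevant.
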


\begin{proof}
  The invariance of 
  $ | \im \big (z \big ) \eta\big (z \big) |$ under $z \rightarrow z+1$ and
  the invariance of 
$| \im \big(\frac{z+1}{2}\big) \eta \big(\frac{z+1}{2}\big) |$ 
under $z \rightarrow z+2$ follow from \eqref{inv-1}. 
By \eqref{inv-2} it is easy to see that
\begin{equation}
  \label{inv-1-2}
  | \im(-\frac{1}{z}) \eta(-\frac{1}{z}) | = |\im (z) \eta(z) |,
\end{equation}
so $| \im \big (z \big ) \eta\big (z \big) |$ is invariant under
$z \rightarrow - \frac{1}{z}$.

The invariance of $| \im \big (z \big ) \eta\big (z \big) |$ under
$z \rightarrow z+1$ implies its invariance uder
$z \rightarrow z+k$ for any integer $k$. Now one deduces
\begin{align}
\big | \im \big(\frac{(-\frac{1}{z})+1}{2}\big)
\eta \big(\frac{(-\frac{1}{z})+1}{2}\big) \big | 
&= \big | \im \big(\frac{z-1}{2z}\big)
\eta \big(\frac{z-1}{2z}\big) \big | \nonumber \\
&= \big | \im \big(\frac{-z-1}{2z}\big)
\eta \big(\frac{-z-1}{2z}\big) \big | \nonumber \\
&= \big | \im \big(\frac{2z}{z+1}\big)
\eta \big(\frac{2z}{z+1}\big) \big | \nonumber \\
&= \big | \im \big(\frac{-2}{z+1}\big)
\eta \big(\frac{-2}{z+1}\big) \big | \nonumber \\
&= \big | \im \big(\frac{z+1}{2}\big)
\eta \big(\frac{z+1}{2}\big) \big | \label{2-2}
  \end{align}
by applying the invariance of
$| \im \big (z \big ) \eta\big (z \big) |$ under
$z \rightarrow z -1$, $z \rightarrow -\frac{1}{z}$,
$z \rightarrow z-2$, and $z \rightarrow -\frac{1}{z}$ successively. 
This proves the invariance of
$| \im \big(\frac{z+1}{2}\big) \eta \big(\frac{z+1}{2}\big) |$
under $z \rightarrow -\frac{1}{z}$.
\end{proof}

There is another invariance that is not a linear fractional transform:
the reflection about the imaginary axis.

\begin{lemma}
  \label{l-invariance-reflection}
   Both $ | \im \big (z \big ) \eta\big (z \big) |$  and
   $| \im \big(\frac{z+1}{2}\big) \eta \big(\frac{z+1}{2}\big) |$,
   and consequently $f_1(z)$ and $f_0(z)$, are invariant under
   $z \rightarrow - \bar{z}$.
  \end{lemma}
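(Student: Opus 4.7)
The plan is to exploit the fact that the reflection $z \mapsto -\bar{z}$ preserves the imaginary part, and then show that $|\eta|$ is also preserved by this reflection, after which the second statement reduces to the first via Lemma \ref{l-invariance}.

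First I would handle $|\im(z)\eta(z)|$. Since $\im(-\bar{z}) = \im(z)$, it suffices to verify $|\eta(-\bar{z})| = |\eta(z)|$. For this I would use the product expansion \eqref{eta} directly. The key arithmetic observation is that for any real $n$,
\[ \e(-n\bar{z}) = e^{2\pi i (-n\bar{z})} = e^{\overline{2\pi i n z}} = \overline{\e(nz)}, \]
so $|1 - \e(-n\bar{z})| = |1 - \overline{\e(nz)}| = |\overline{1-\e(nz)}| = |1 - \e(nz)|$. Similarly $|\e(-\bar{z}/6)| = |\e(z/6)|$ because both absolute values depend only on $\im(z) = \im(-\bar{z})$. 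Multiplying these equalities term by term in the product for $\eta$ gives $|\eta(-\bar{z})| = |\eta(z)|$, and hence the invariance of $f_1$.

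For the second quantity, I would introduce the substitution $w = \frac{z+1}{2}$ and compute how the reflection $z \mapsto -\bar{z}$ acts on $w$:
\[ w \;\longmapsto\; \frac{-\bar{z}+1}{2} \;=\; -\,\overline{\Big(\frac{z+1}{2}\Big)} + 1 \;=\; -\bar{w}+1. \]
Thus the reflection in $z$ corresponds to the composition of the reflection $w \mapsto -\bar{w}$ and the translation $w \mapsto w+1$ on $w$. The first part of this lemma applied to $w$ gives $|\im(-\bar{w})\eta(-\bar{w})| = |\im(w)\eta(w)|$, and the translation invariance from Lemma \ref{l-invariance}(1) gives $|\im(-\bar{w}+1)\eta(-\bar{w}+1)| = |\im(-\bar{w})\eta(-\bar{w})|$. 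Chaining these yields the invariance of $f_0$ under $z \mapsto -\bar{z}$.

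There is no serious obstacle here; the only thing to be careful about is the sign bookkeeping when conjugating complex exponentials and the identification $\frac{-\bar{z}+1}{2} = -\bar{w}+1$ rather than simply $-\bar{w}$, which is why the translation invariance from the previous lemma must be invoked for the $f_0$ half of the statement.
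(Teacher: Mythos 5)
Your proof is correct and is essentially the elaboration of the paper's own one-line argument, which simply says the invariance "follows easily from the infinite product definition of $\eta$"; your conjugation identity $\e(-n\bar z)=\overline{\e(nz)}$ is exactly the mechanism behind that remark. The only cosmetic difference is that for $f_0$ you route the extra shift $\frac{-\bar z+1}{2}=-\bar w+1$ through the translation invariance of Lemma \ref{l-invariance}, whereas one can equally absorb the factor $e^{\pi i n}=e^{-\pi i n}$ directly inside the product; both are valid and equivalent.
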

\begin{proof}
  These follow easily from the infinite product definition \eqref{eta}
  of $\eta$. 
  \end{proof}

The transforms
$z \rightarrow z+1$ and $z \rightarrow -\frac{1}{z}$ generate
the modular group $\Gamma$, 
\[ \Gamma = \left \{ z \rightarrow \frac{az + b}{cz +d}: a,b,c,d \in \mathbb{Z},
\ ad - bc =1 \right \}. \]
And $\Gamma$ has
\begin{equation}
  \label{gamma-fund}
   F_\Gamma=\{ z \in \mathbb{H}: \ |z| >1, \ -1/2 < \re z < 1/2 \}
  \end{equation}
as a fundamental region. It means that every orbit under this
group has one element in
$\overline{F_\Gamma}_{\mathbb{H}}$, the closure of $F_\Gamma$ in $\mathbb{H}$,
and no two points in $F_\Gamma$ belong to the same orbit \cite{apostol-2}.

The transforms $z \rightarrow z+2$ and $z \rightarrow -\frac{1}{z}$
generate a subgroup $\Gamma'$ of $\Gamma$,
\begin{align}
  \label{Gamma-2}
  \Gamma' & = \Big \{ z \rightarrow \frac{az +b}{cz +d} \in \Gamma:
  \ a \equiv d \equiv 1 \mod 2 \ \ \ \mbox{and} \ \ 
  \ b \equiv c \equiv 0 \mod 2, \nonumber \\  & \qquad \mbox{or} \
 \ a \equiv d \equiv 0 \mod 2 \ \ \ \mbox{and} \ \ 
  \ b \equiv c \equiv 1 \mod 2
 \Big  \}. 
  \end{align}
It is known in number theory that this group has
\begin{equation}
    F_{\Gamma'}=\{ z \in \mathbb{H}: \ |z| >1, \ -1 < \re z < 1 \}
  \end{equation}
as a fundamental region \cite{evans_ronald}.

Denote by ${\cal G}$ the group of diffeomorphisms of
$\mathbb{H}$ generated by
\[ z \rightarrow z+2, \ \ z \rightarrow - \frac{1}{z},
\ \ z \rightarrow - \bar{z}. \]
Note that $\Gamma'$ is a subgroup of ${\cal G}$ but $\Gamma$ is not
a subgroup of ${\cal G}$.

With the group ${\cal G}$, maximizing $f_b$ need
not be carried out in $\mathbb{H}$, but in a smaller set which contains at least one element from each orbit of ${\cal G}$. Let
\begin{equation}
    \label{W}
    W = \{ z \in \mathbb{H}: \ 0 < \re z <1, \ |z| >1 \}
\end{equation}
and
\begin{equation}
    \label{W-closure}
    \overline{W}_{\mathbb{H}}
    = \{ z \in \mathbb{H}: \ 0 \leq \re z \leq 1, \ |z| \geq 1 \};
\end{equation}
see Figure \ref{f-W}.
Note that $\overline{W}_{\mathbb{H}}$ is the closure of $W$ in $\mathbb{H}$
so $1 \not \in \overline{W}_{\mathbb{H}}$.
\begin{figure}
\centering
 \includegraphics[scale=0.13]{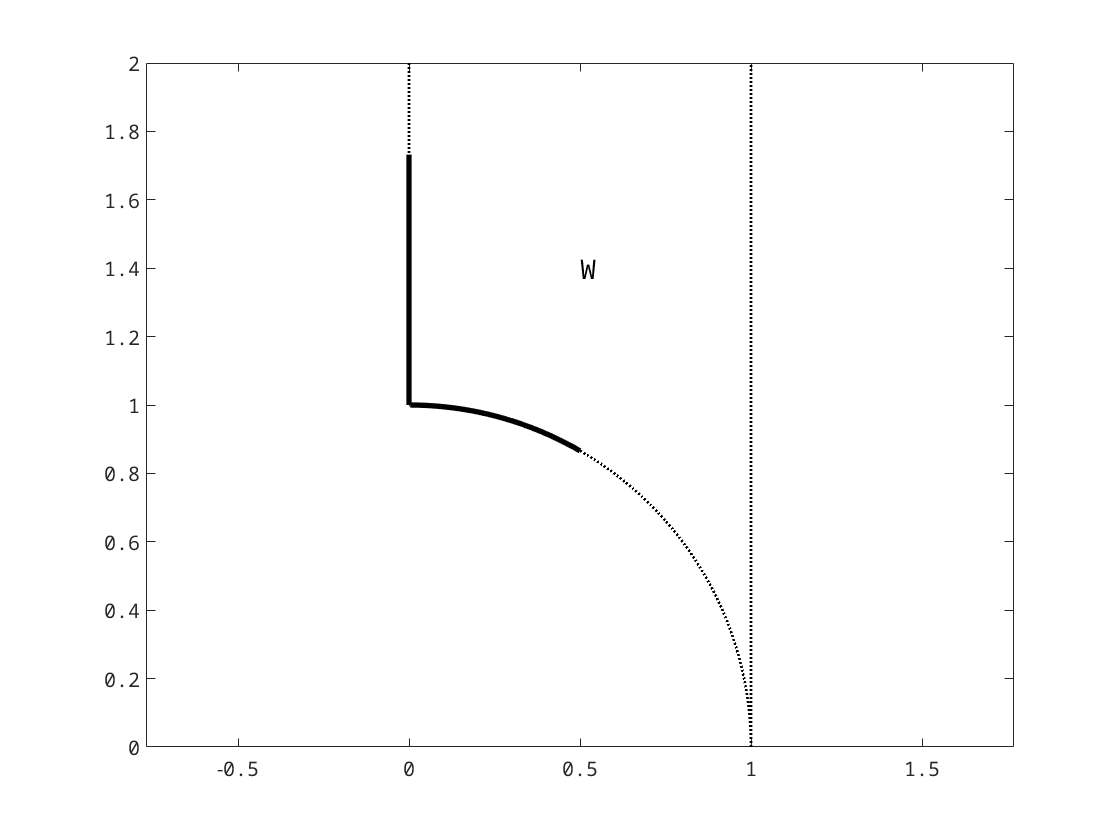} 
 \includegraphics[scale=0.13]{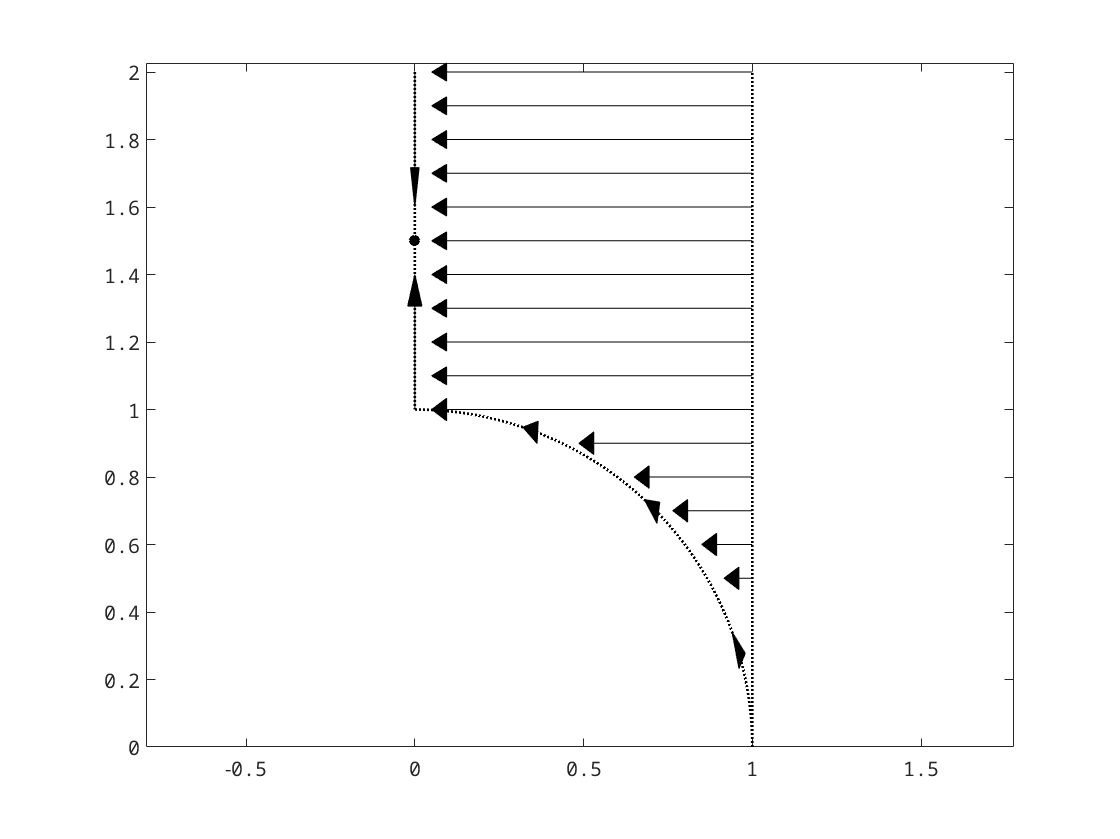}
 \caption{The left plot shows the set $W$. In $\overline{W}_{\mathbb{H}}$,
   $f_b$ attain the maximum at a point either on the thick line segment
   or on the thick arc. In the right plot, with  $b \in (0,B)$,
   $f_b$ increases in the directions of the arrows. The dot on the
 imaginary axis is $q_b i$.}
\label{f-W}
\end{figure}

\begin{lemma}
  \label{l-inv-group}
  \begin{enumerate}
  \item $| \im \big (z \big ) \eta\big (z \big)|$ and $f_1(z)$ are
    invariant under the group $\Gamma$ and the transform
    $z \rightarrow -\bar{z}$.
  \item $| \im \big(\frac{z+1}{2}\big) \eta \big(\frac{z+1}{2}\big) |$,
    $f_0(z)$, and $f_b(z)$, for $b \in \mathbb{R}$, are invariant
    under the groups $\Gamma'$ and ${\cal G}$.
  \item As the group ${\cal G}$ acts on $\mathbb{H}$, each orbit of ${\cal G}$
    has at least one element in $\overline{W}_{\mathbb{H}}$.
 \end{enumerate}
\end{lemma}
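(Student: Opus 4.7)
The plan is to derive parts (1) and (2) directly from the invariances already established in Lemmas \ref{l-invariance} and \ref{l-invariance-reflection}, and then to reduce part (3) to the known fundamental domain description of $\Gamma'$ together with a single application of the reflection $z \to -\bar{z}$.

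For part (1), since $\Gamma$ is generated by $z \to z+1$ and $z \to -1/z$, and $|\im(z)\eta(z)|$ is invariant under each of these by Lemma \ref{l-invariance}, invariance under every element of $\Gamma$ follows by composing generators; taking logarithms transfers this to $f_1$. The additional invariance under $z \to -\bar z$ is immediate from Lemma \ref{l-invariance-reflection}. For part (2), the group $\Gamma'$ is generated by $z \to z+2$ and $z \to -1/z$, and Lemma \ref{l-invariance} gives invariance of $|\im(\tfrac{z+1}{2})\eta(\tfrac{z+1}{2})|$ under both generators, hence under all of $\Gamma'$, and therefore also for $f_0$. Since $\Gamma' \subset \Gamma$, the function $f_1$ is likewise $\Gamma'$-invariant. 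Combining this with Lemma \ref{l-invariance-reflection} yields invariance of both $f_0$ and $f_1$ under the three generators $z \to z+2$, $z \to -1/z$, $z \to -\bar z$ of $\mathcal{G}$, and so $f_b = b f_1 + (1-b) f_0$ is $\mathcal{G}$-invariant for every $b \in \mathbb{R}$.

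For part (3), I plan to use the fact quoted from \cite{evans_ronald} that $F_{\Gamma'} = \{z \in \mathbb{H} : |z| > 1, \ -1 < \re z < 1\}$ is a fundamental region for $\Gamma'$. Hence every $z \in \mathbb{H}$ has a $\Gamma'$-translate $z'$ lying in the closure $\{z \in \mathbb{H} : |z|\geq 1, \ -1 \leq \re z \leq 1\}$. If $\re z' \geq 0$, then already $z' \in \overline{W}_{\mathbb{H}}$. Otherwise $-1 \leq \re z' < 0$, and applying the reflection $z \to -\bar z \in \mathcal{G}$ produces a point of $\{|z|\geq 1, \ 0 < \re z \leq 1\} \subset \overline{W}_{\mathbb{H}}$ in the same $\mathcal{G}$-orbit as $z$. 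This gives the desired representative.

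I do not expect a significant obstacle here: parts (1) and (2) amount to the standard remark that invariance propagates from a generating set to the whole group, and part (3) is a routine fundamental-domain argument followed by one reflection. The only mild point to keep in mind is the observation already made in the excerpt that $\Gamma \not\subset \mathcal{G}$, so for part (2) one must work through $\Gamma'$ rather than through $\Gamma$; the translation $z \to z+1$ is in $\Gamma$ but not in $\mathcal{G}$, and it is not needed since the needed reduction only uses $z \to z+2$, $z \to -1/z$, and the reflection.
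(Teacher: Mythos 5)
Your proposal is correct and follows essentially the same route as the paper: parts (1) and (2) are deduced by propagating the generator invariances of Lemmas \ref{l-invariance} and \ref{l-invariance-reflection} to the full groups, and part (3) combines the fundamental region $F_{\Gamma'}$ with one application of the reflection $z \rightarrow -\bar{z}$. The paper states this in two sentences; you merely spell out the same steps in more detail.
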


\begin{proof}
  Part 1 and part 2 follow from Lemmas \ref{l-invariance} and
  \ref{l-invariance-reflection}. Part 3 follows from
  $F_{\Gamma'}$ being the fundamental region of $\Gamma'$ and the
  transform $z \rightarrow - \bar{z} \in {\cal G}$.
\end{proof}

It is instructive to understand transforms from the view point of bases. Let $\alpha=(\alpha_1, \alpha_2)$
and $\alpha'=(\alpha'_1, \alpha'_2)$ be two bases of unit cell area that define lattices $\Lambda$ and $\Lambda'$, and two species
periodic assemblies $(\Omega_{\alpha,1}, \Omega_{\alpha,2})$ and $(\Omega_{\alpha',1}, \Omega_{\alpha',2})$.
Set $z = \frac{\alpha_2}{\alpha_1}$ and $z'=\frac{\alpha'_2}{\alpha'_1}$.

If $\alpha$ and $\alpha'$ are related by the transform $z \rightarrow z'=z+1$, then 
\[ \frac{\alpha_2'}{\alpha_1'} = \frac{\alpha_1+\alpha_2 }{\alpha_1}  \]
and consequently there exists $ \kappa \in \mathbb{C}$ such that 
$\alpha'_1 = \kappa \alpha_1$ and  $\alpha'_2 = \kappa (\alpha_1+\alpha_2)$. 
Since both bases have unit cell area,
\[ 1= \im(\overline{\alpha'_1} \alpha'_2) = |\kappa|^2 \im(\overline{\alpha_1} (\alpha_1+\alpha_2))
= |\kappa|^2 \im (\overline{\alpha_1} \alpha_2) =|\kappa|^2. \]
So there exists $\theta \in [0, 2\pi)$ such that $\kappa = e^{\theta i}$  and 
$\alpha'= e^{\theta i} (\alpha_1, \alpha_1+\alpha_2)$. Let $\alpha''=e^{\theta i} \alpha=e^{\theta i}(\alpha_1,\alpha_2)$. Then
$\alpha'$ and $\alpha''$ generate the same lattice $\Lambda'$, and consequently
$\Lambda$ and $\Lambda'$ are isometric in the sense that a parallelogram cell $D_\alpha$
of $\Lambda$ is just a rotation of a parallelogram cell $D_{\alpha''}$ of $\Lambda'$. However 
the assemblies $(\Omega_{\alpha,1}, \Omega_{\alpha,2})$ and $(\Omega_{\alpha',1}, \Omega_{\alpha',2})$ are usually quite
different since in general $ {\cal J}_\Lambda(\Omega_{\alpha,1}, \Omega_{\alpha,2}) \ne {\cal J}_{\Lambda'}(\Omega_{\alpha',1}, \Omega_{\alpha',2})$.

The story changes if $\alpha$ and $\alpha'$ are related by the transform $z \rightarrow z'=z+2$. This time not only
$\Lambda$ and $\Lambda'$ are isometric, $ {\cal J}_\Lambda(\Omega_{\alpha,1}, \Omega_{\alpha,2}) = {\cal J}_{\Lambda'}(\Omega_{\alpha',1}, \Omega_{\alpha',2})$ as well by Lemma \ref{l-inv-group}.2.

If $\alpha$ and $\alpha'$ are related by $z \rightarrow z'=-\frac{1}{z}$, one can show that 
$(\alpha'_1,\alpha'_2)=e^{\theta i}(-\alpha_2,\alpha_1)$, $\theta \in [0,2\pi)$. Then $\Lambda$ and $\Lambda'$ are 
isometric since $D_\alpha$ and $D_{\alpha'}$ differ by a translation and rotation. Moreover
$ {\cal J}_\Lambda(\Omega_{\alpha,1}, \Omega_{\alpha,2}) = {\cal J}_{\Lambda'}(\Omega_{\alpha',1}, \Omega_{\alpha',2})$ by Lemma \ref{l-inv-group}.2.

Finally if $\alpha$ and $\alpha'$ are related by $z \rightarrow z'=-\bar{z}$, then 
$(\alpha'_1,\alpha'_2)=e^{\theta i}(\overline{\alpha_1},-\overline{\alpha_2})$, $\theta \in [0,2\pi)$.  Therefore
$D_\alpha$ and $D_{\alpha'}$ differ by a mirror reflection, a translation and a rotation, so $\Lambda$ and
$\Lambda'$ are again isometric. By Lemma \ref{l-inv-group}.2, 
$ {\cal J}_\Lambda(\Omega_{\alpha,1}, \Omega_{\alpha,2}) = {\cal J}_{\Lambda'}(\Omega_{\alpha',1}, \Omega_{\alpha',2})$.

In summary if $z$ and $z'$ are related by an element $g \in {\cal G}$, i.e. $z'=g z$, then $\Lambda$ and $\Lambda'$ are
isometric and $ {\cal J}_\Lambda(\Omega_{\alpha,1}, \Omega_{\alpha,2}) = {\cal J}_{\Lambda'}(\Omega_{\alpha',1}, \Omega_{\alpha',2})$.  For isometric $\Lambda$ and $\Lambda'$, if $\Lambda$ is a rectangular lattice, then $\Lambda'$ is also a rectangular lattice of the same ratio of longer side and shorter side; if $\Lambda$ is a rhombic lattice, then $\Lambda'$ is also a rhombic lattice of the same acute angle. Therefore to prove Theorem \ref{t-main}, it suffices to find every minimum of $f_b$ in $\overline{W}_{\mathbb{H}}$
and identify its associated  lattice. 

In the exceptional case $b=1$, since $f_1$ is invariant under $\Gamma$,
it suffices to maximize $f_1$ in a smaller set
\begin{equation}
  \label{U-closure}
  \overline{U}_{\mathbb{H}} = \{ z \in \mathbb{H}: \
  0 \leq \re z \leq 1/2, \ |z| \geq 1 \}
\end{equation}
which is the closure of
  \begin{equation}
    U = \Big \{ z \in \mathbb{C}: \ |z| >1, \ 0< \re z < \frac{1}{2} \Big \}.
    \label{U}
  \end{equation}
This fact was used critically in
\cite{chenoshita-2}, but it is not valid if $b \ne 1$.
The approach of this paper works for all $b \in [0,1]$,
 giving a different proof for the $b=1$ case as well. 

One of the most important properties of $f_b$ is the following 
duality relation between $f_b$ and $f_{1-b}$.

\begin{lemma}
\label{l-dual}
Under the transform $z \rightarrow w = \frac{z-1}{z+1}$ of $\mathbb{H}$, 
\[ f_1(z) = f_0 (w) \ \ \mbox{and} \ \ f_0(z) = f_1(w), \ \ z \in \mathbb{H} \ \ \mbox{and} \ \ 
 w=\frac{z-1}{z+1} \in \mathbb{H}. \]
Consequently, for all $b \in \mathbb{R}$,
\[ f_b(w) = f_{1-b} (z), \ \  z \in \mathbb{H} \ \ \mbox{and} \ \ 
 w=\frac{z-1}{z+1} \in \mathbb{H}. \]

 More generally, if $h: z' \rightarrow w'= \frac{z'-1}{z'+1}$ and
 $g_1: z \rightarrow z'$, $g_2: w' \rightarrow w$ are transforms in ${\cal G}$,
 then
 \[ f_b(w) = f_{1-b} (z), \ \  z \in \mathbb{H} \ \ \mbox{and} \ \ 
 w=g_2 \circ h \circ g_1 (z) \in \mathbb{H}.\]
\end{lemma}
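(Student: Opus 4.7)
The plan is to reduce the duality to the $\Gamma$-invariance of $|\im(z)\eta(z)|$ (Lemma \ref{l-invariance}), exploiting the fact that the Cayley-type map $h(z)=\frac{z-1}{z+1}$ intertwines the two ``halved'' arguments appearing in $f_0$ and $f_1$. Write $w=\frac{z-1}{z+1}$ and let $T,S\in\Gamma$ denote the generators $T(z)=z+1$, $S(z)=-\frac{1}{z}$.

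First I would verify the identity $f_0(w)=f_1(z)$. A direct computation gives
\[
\frac{w+1}{2}=\frac{1}{2}\cdot\frac{(z-1)+(z+1)}{z+1}=\frac{z}{z+1}=T\!\circ\! S\!\circ\! T(z),
\]
since $T(z)=z+1$, $S(T(z))=-\frac{1}{z+1}$, and $T(S(T(z)))=1-\frac{1}{z+1}=\frac{z}{z+1}$. Because $|\im(\cdot)\eta(\cdot)|$ is $\Gamma$-invariant (Lemma \ref{l-invariance}, part 1), this yields $|\im(\frac{w+1}{2})\eta(\frac{w+1}{2})|=|\im(z)\eta(z)|$, i.e.\ $f_0(w)=f_1(z)$. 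Symmetrically, setting $u=\frac{z+1}{2}$ and rewriting $z=2u-1$, one finds
\[
w=\frac{z-1}{z+1}=\frac{u-1}{u}=1-\frac{1}{u}=T\!\circ\! S(u),
\]
so again by $\Gamma$-invariance $|\im(w)\eta(w)|=|\im(u)\eta(u)|$, which is precisely $f_1(w)=f_0(z)$.

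Combining these via the convex-combination definition $f_b=bf_1+(1-b)f_0$ immediately gives
\[
f_b(w)=bf_1(w)+(1-b)f_0(w)=bf_0(z)+(1-b)f_1(z)=f_{1-b}(z),
\]
valid for every $b\in\mathbb{R}$. For the more general statement, suppose $w=g_2\circ h\circ g_1(z)$ with $g_1,g_2\in\mathcal{G}$. Since $f_b$ is $\mathcal{G}$-invariant for every $b$ (Lemma \ref{l-inv-group}, part 2), applying this invariance first to $g_2$ and then to $g_1$ reduces the claim to the basic duality already proved:
\[
f_b(w)=f_b\!\bigl(h(g_1(z))\bigr)=f_{1-b}\!\bigl(g_1(z)\bigr)=f_{1-b}(z).
\]

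No genuine obstacle is anticipated: the only substantive content is the recognition of $\frac{z}{z+1}$ and $1-\frac{1}{u}$ as explicit words in the modular generators $T,S$, after which everything follows from invariance results already established. Care is only needed to keep the two halving maps $z\mapsto\frac{z+1}{2}$ and $z\mapsto z$ straight when applying $\Gamma$-invariance, since $f_0$ and $f_1$ differ precisely by which argument gets inserted into $|\im(\cdot)\eta(\cdot)|$.
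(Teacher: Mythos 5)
Your proof is correct and follows essentially the same route as the paper: both arguments reduce the duality to the $\Gamma$-invariance of $|\im(z)\eta(z)|$ by recognizing that $\frac{w+1}{2}=\frac{z}{z+1}$ lies in the $\Gamma$-orbit of $z$ and that $w=1-\frac{1}{u}$ lies in the $\Gamma$-orbit of $u=\frac{z+1}{2}$, and then handle the general case by ${\cal G}$-invariance of $f_b$. The paper merely organizes the same computation by factoring through the intermediate point $w'=\frac{z}{z+1}$ and the affine map $w'\mapsto 2w'-1$, whereas you identify the relevant words in the generators $T,S$ directly; the content is identical.
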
 

\begin{proof}
The transform $z \rightarrow w'=\frac{z}{z+1}$ is in $\Gamma$, so 
\[ f_1(z) = f_1(w') \]
by the invariance of $f_1$ under $\Gamma$. On the other hand substitution shows
\[ f_0(z) = f_0(\frac{w'}{-w'+1})=\log | \im(\frac{1}{-2w'+2}) \eta (\frac{1}{-2w'+2})|
= f_1(\frac{1}{-2w'+2}). \]
Now apply another transform $w' \rightarrow w = 2 w'-1$ which is not in $\Gamma$ to 
find 
\[ f_1(z) = f_1(\frac{w+1}{2})=f_0(w) \]
and 
\[ f_0(z) = f_1 (\frac{1}{-w+1}) = f_1(w)\]
where the last equation follows from 
the invariance of $f_1$ under $ w \rightarrow \frac{1}{-w+1} \in \Gamma$.

The composition of the two transforms is
$z \rightarrow w'=\frac{z}{z+1} \rightarrow w=2w'-1= \frac{z-1}{z+1}$. 
\end{proof}

Although $b$ is supposed to be in $[0,1]$ throughout this paper,
some properties of $f_b$, like Lemma \ref{l-dual},
hold for $b \in \mathbb{R}$. If this is the case we state so explicitly.

Let us write $z=x+yi$ henceforth, and set
\[ X_b(z) = \frac{\partial f_b(z)}{\partial x} = b X_1(z) + (1-b) X_0(z), \ \
Y_b(z) = \frac{\partial f_b(z)}{\partial y}= b Y_1(z) + (1-b) Y_0(z), \]
where
\begin{align}
  \label{X1Y1}
  X_1(z) & = \frac{\partial}{\partial x} \log |\im(z) \eta(z)|, \ \
  Y_1(z) = \frac{\partial}{\partial y} \log |\im(z) \eta(z)| \\
  X_0(z) & = \frac{\partial}{\partial x} \log |\im(\frac{z+1}{2})
  \eta(\frac{z+1}{2})|, \ \
  Y_0(z) = \frac{\partial}{\partial y} \log |\im(\frac{z+1}{2})
  \eta(\frac{z+1}{2})|.
  \end{align}
These functions can be written as the following series.
  \begin{align}
    \label{X1}
    X_1(z) &= \sum_{n=1}^\infty \frac{8\pi n \sin 2\pi n x }
    { e^{2\pi ny} + e^{-2\pi ny} - 2 \cos 2\pi n x} \\ \label{Y1}
    Y_1(z) &= \frac{1}{y} - \frac{\pi}{3} +
    \sum_{n=1}^\infty \frac{-8 \pi n e^{-2\pi n y} + 8\pi n \cos 2\pi n x }
        { e^{2\pi ny} + e^{-2\pi ny} - 2 \cos 2\pi n x} \\  \label{X0}
    X_0(z) &= \sum_{n=1}^\infty \frac{4\pi n \sin  \pi n (x+1) }
    { e^{\pi ny} + e^{-\pi ny} - 2 \cos \pi n (x+1) } \\   \label{Y0}
    Y_0(z) &= \frac{1}{y} - \frac{\pi}{6} +
    \sum_{n=1}^\infty \frac{-4 \pi n e^{-\pi n y} + 4\pi n \cos \pi n (x+1) }
        { e^{\pi ny} + e^{-\pi ny} - 2 \cos \pi n (x+1)}.    
  \end{align}

  We end this section with two formulas that relate 
  $f_b$ on the upper half of the unit circle 
  to $f_{1-b}$ on the upper half of the imaginary axis.

  \begin{lemma}
    \label{l-circle-im}
    Let the upper half of the unit circle be parametrized by
    $u+ i \sqrt{1-u^2}$, $u \in (-1,1)$. Then
    $\frac{\sqrt{1-u^2}}{1-u} i$ parametrizes the upper half of the
    imaginary axies, and 
  \begin{align}
  \label{line-to-circle-X}
  X_b(u + i\sqrt{1-u^2}) &= \frac{\sqrt{1-u^2}}{1-u}  Y_{1-b}
  \Big(\frac{\sqrt{1-u^2}}{1-u} i \Big) \\  \label{line-to-circle-Y}
  Y_b(u + i \sqrt{1-u^2}) &= \frac{-u}{1-u}  Y_{1-b}
  \Big(\frac{\sqrt{1-u^2}}{1-u} i\Big) 
  \end{align}
  hold for $u \in (-1,1)$.
    \end{lemma}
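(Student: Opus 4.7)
The plan is to apply the duality Lemma \ref{l-dual} with the Möbius map $h(z) = (z-1)/(z+1)$ alone (taking $g_1 = g_2 = \mathrm{id}$) and differentiate the resulting identity $f_b(h(z)) = f_{1-b}(z)$ at a point on the imaginary axis of $z$. Direct substitution gives $h(yi) = (y^2-1)/(y^2+1) + 2yi/(y^2+1)$, which lies on the upper unit semicircle. Matching $u = (y^2-1)/(y^2+1)$ yields $\sqrt{1-u^2} = 2y/(y^2+1)$ and the inverse $y = \sqrt{1-u^2}/(1-u)$, together with the identity $1+y^2 = 2/(1-u)$ that will streamline the calculation below.

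Write $z = \xi + \eta i$ and $w = h(z) = x(\xi,\eta) + y'(\xi,\eta) i$. Since $h$ is holomorphic, the Cauchy--Riemann equations give $\partial w/\partial \xi = h'(z) = 2/(z+1)^2$ and $\partial w/\partial \eta = i h'(z)$. Rationalizing at $z = yi$ and factoring out $2/(1+y^2) = 1-u$, one obtains the Jacobian entries
\begin{align*}
(x_\xi, y'_\xi) &= (1-u)\bigl(-u,\, -\sqrt{1-u^2}\bigr), \\
(x_\eta, y'_\eta) &= (1-u)\bigl(\sqrt{1-u^2},\, -u\bigr).
\end{align*}
The chain rule applied to $f_b(w) = f_{1-b}(z)$ at $z = yi$ then produces the two scalar equations
\begin{align*}
-(1-u)\bigl[u X_b(w) + \sqrt{1-u^2}\, Y_b(w)\bigr] &= X_{1-b}(yi), \\
(1-u)\bigl[\sqrt{1-u^2}\, X_b(w) - u\, Y_b(w)\bigr] &= Y_{1-b}(yi),
\end{align*}
where $w = u + i\sqrt{1-u^2}$.

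The critical simplification is $X_{1-b}(yi) = 0$. This is visible directly from the series \eqref{X1} and \eqref{X0}: both contain sine factors that vanish at $x = 0$, since $\sin(2\pi n \cdot 0) = 0$ and $\sin\bigl(\pi n(0+1)\bigr) = \sin(\pi n) = 0$; equivalently, it follows from the reflection invariance $z \mapsto -\bar z$ of Lemma \ref{l-invariance-reflection}. From the first equation this forces the linear dependence $Y_b(w) = -u(1-u^2)^{-1/2} X_b(w)$. Substituting into the second equation makes the bracket collapse via $\sqrt{1-u^2} + u^2/\sqrt{1-u^2} = 1/\sqrt{1-u^2}$, yielding $X_b(w) = \frac{\sqrt{1-u^2}}{1-u}\, Y_{1-b}(yi)$, and then the dependence relation gives $Y_b(w) = -\frac{u}{1-u}\, Y_{1-b}(yi)$, which are precisely \eqref{line-to-circle-X}--\eqref{line-to-circle-Y}.

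The computation is essentially routine; the only real hazard is organizational. One must differentiate the real function $f_b$ (a log-modulus, not a holomorphic function) by the chain rule in real coordinates $\xi,\eta$, rather than be tempted by a purely complex-derivative shortcut. Spotting the two shortcuts $1+y^2 = 2/(1-u)$ and $X_{1-b}(yi) = 0$ at the outset collapses what looks like a two-variable linear system into a one-line derivation.
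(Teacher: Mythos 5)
Your proof is correct and uses essentially the same ingredients as the paper's: the duality identity $f_b(w)=f_{1-b}(z)$ of Lemma \ref{l-dual} for $w=\frac{z-1}{z+1}$, the chain rule, and the vanishing $X_{1-b}(yi)=0$ on the imaginary axis. The only difference is organizational — you differentiate with respect to the source coordinates of $z$ and solve the resulting $2\times 2$ system, whereas the paper differentiates with respect to the target coordinates $(u,v)$ so that $X_b(w)$ and $Y_b(w)$ appear isolated from the start — and your Jacobian entries and the final coefficients $\frac{\sqrt{1-u^2}}{1-u}$ and $\frac{-u}{1-u}$ check out.
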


\begin{proof}
Consider the transform in Lemma \ref{l-dual}, $z \rightarrow w = \frac{z-1}{z+1}$.  With $z=x+yi$ and
$w=u+vi$, 
\begin{equation}
  \label{key-transform}
  u = \frac{x^2+y^2-1}{(x+1)^2+y^2}, \ \ v=\frac{2y}{(x+1)^2+y^2}.
\end{equation}
Conversely,
\begin{equation}
  \label{key-transform-1}
  x = \frac{1-u^2-v^2}{(1-u)^2+v^2}, \ \ y=\frac{2v}{(1-u)^2+v^2}.
\end{equation}
Differentiate $f_b(w) = f_{1-b} (z)$ with respect to $u$ and $v$ to find
\begin{align*} 
  &X_b(w) = X_{1-b}(z) \frac{\partial x}{\partial u} + Y_{1-b}(z)
  \frac{\partial y}{\partial u} \\
   & Y_b(w) = X_{1-b}(z) \frac{\partial x}{\partial v} + Y_{1-b}(z)
  \frac{\partial y}{\partial v}. 
\end{align*}
When $w$ is on the unit circle, $z$ is on the imaginary axis. Since $f_{1-b}$
is invariant under the reflection about the imaginary axis,
$X_{1-b}(z) =0$ on the imaginary axis. Also 
\begin{equation}
   \frac{\partial y}{\partial u}\Big |_{|w|=1}  = \frac{\sqrt{1-u^2}}{1-u}, 
  \ \ \frac{\partial y}{\partial v}\Big |_{|w|=1}  = \frac{-u}{1-u}
\end{equation}
from which the lemma follows.
\end{proof}

\section{$f_b$ on imaginary axis}
\setcounter{equation}{0}

The behavior of $f_b$ on the imaginary axis is studied in this section.
Let us record some of the derivatives of $f_1$ and $f_0$ on the
imaginary axis for use here and later.
Let
\begin{equation}
  r = e^{-\pi y}.
  \label{r}
  \end{equation}
Then by \eqref{Y1} and \eqref{Y0}, 
\begin{align}
  Y_1(yi) &= \frac{1}{y} - \frac{\pi}{3} + \sum_{n=1}^\infty
  \frac{8\pi n r^{2n}}{1-r^{2n}} \label{Y1-im}\\
  \frac{\partial Y_1(yi)}{\partial y}
  &= -\frac{1}{y^2} - \sum_{n=1}^\infty
  \frac{16\pi^2 n^2 r^{2n}}{(1-r^{2n})^2} \label{Y1'-im}\\
  \frac{\partial^2 Y_1(yi)}{\partial y^2}
  &= \frac{2}{y^3} + \sum_{n=1}^\infty
  \frac{32\pi^3 n^3 (r^{2n}+r^{4n})}{(1-r^{2n})^3}\label{Y1''-im} \\
  \frac{\partial^3 Y_1(yi)}{\partial y^3}
  &= -\frac{6}{y^4} - \sum_{n=1}^\infty
  \frac{64\pi^4 n^4 (r^{2n}+4r^{4n}+r^{6n})}{(1-r^{2n})^4} \label{Y1'''-im}\\
  Y_0(yi) &= \frac{1}{y} - \frac{\pi}{6} + \sum_{n=1}^\infty
    \frac{4\pi n (-r)^n}{1-(-r)^n} \label{Y0-im} \\
  \frac{\partial Y_0(yi)}{\partial y}
  &= -\frac{1}{y^2} - \sum_{n=1}^\infty
  \frac{4\pi^2 n^2 (-r)^n}{(1 - (-r)^n)^2} \label{Y0'-im} \\
   \frac{\partial^2 Y_0(yi)}{\partial y^2}
  &= \frac{2}{y^3} + \sum_{n=1}^\infty
   \frac{4\pi^3 n^3 \Big ((-r)^n+r^{2n} \Big )}{(1 - (-r)^n)^3}
       \label{Y0''-im} \\
   \frac{\partial^3 Y_0(yi)}{\partial y^3}
  &= -\frac{6}{y^4} - \sum_{n=1}^\infty
   \frac{4\pi^4 n^4\Big((-r)^n+4r^{2n} +(-r)^{3n} \Big)}{(1 - (-r)^n)^4}.
       \label{Y0'''-im}
\end{align}

\begin{lemma}
  \label{l-imaginary}
  For all $b \in \mathbb{R}$,
  \[ f_b(yi) = f_b \Big ( \frac{i}{y}  \Big ), \ \ y>0. \]
  Consequently,
  \[ Y_b(yi) = \Big (- \frac{1}{y^2} \Big )
  Y_b  \Big ( \frac{i}{y} \Big ), \ \ y>0. \]
  In particular \[ Y_b(i)=0. \]
  \end{lemma}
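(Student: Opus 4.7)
The plan is to reduce everything to the invariance of $f_1$ and $f_0$ under $z\mapsto -1/z$, which is already established. By Lemma \ref{l-invariance}, both $f_1$ and $f_0$ are invariant under $z\mapsto -1/z$ (for $f_1$ this is part of the $\Gamma$-invariance and for $f_0$ it is one of the generators of $\mathcal{G}$). Since $f_b = b f_1 + (1-b) f_0$ depends linearly on $b$, the same invariance passes to $f_b$ for every $b\in\mathbb{R}$.

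Now apply this invariance along the imaginary axis: for $z=yi$ with $y>0$ one has $-1/z = i/y$, so
\[ f_b(yi) = f_b\Big(-\frac{1}{yi}\Big) = f_b\Big(\frac{i}{y}\Big), \]
which is the first identity.

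For the derivative identity, differentiate both sides with respect to $y$. On the left, $\frac{d}{dy} f_b(yi) = Y_b(yi)$ since $x$ is constant along the imaginary axis. On the right, $\frac{d}{dy} f_b(i/y) = Y_b(i/y)\cdot \frac{d}{dy}(1/y) = -\frac{1}{y^2} Y_b(i/y)$, again because $\re z \equiv 0$ so only the $y$-component of the gradient contributes. This yields the stated relation. Setting $y=1$ gives $Y_b(i) = -Y_b(i)$, hence $Y_b(i)=0$.

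There is no real obstacle here; the only point worth flagging is that the identity is valid for arbitrary real $b$, not just $b\in[0,1]$, because the invariance of each of $f_1$ and $f_0$ under $z\mapsto -1/z$ is unconditional and linearity in $b$ preserves it.
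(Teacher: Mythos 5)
Your proof is correct and follows exactly the paper's argument: invoke the invariance of $f_b$ under $z \mapsto -1/z$ (which holds for all real $b$ by linearity in $b$ of $f_b = bf_1 + (1-b)f_0$), specialize to $z = yi$, differentiate in $y$, and set $y=1$. The paper's own proof is a one-line version of the same reasoning, so there is nothing to add.
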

\begin{proof}
  Apply the invariance of $f_b(z)$ under $z \rightarrow -\frac{1}{z}$ with
  $z=yi$, $y>0$. Then differentiate with respect to $y$ and set $y=1$.
  \end{proof}

\begin{lemma}
\label{l-f1-im}
The function $y \rightarrow f_1(yi)$, $y>0$, has one
critical point at $y=1$. Moreover
\begin{align*}
  Y_1(yi)  &>  0 \ \mbox{if} \ y \in (0,1) \\
  Y_1(yi)  &<  0 \ \mbox{if} \ y \in (1,\infty)
\end{align*}
\end{lemma}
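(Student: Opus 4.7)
The plan is to exploit the explicit series representation \eqref{Y1'-im} for $\frac{\partial Y_1(yi)}{\partial y}$, which will show that the function $y \mapsto Y_1(yi)$ is strictly decreasing on $(0, \infty)$. Combined with the vanishing $Y_1(i) = 0$ already provided by Lemma \ref{l-imaginary}, this will immediately pin down the sign of $Y_1(yi)$ on both sides of $y = 1$ and rule out any other critical points.

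Concretely, I would proceed as follows. With $r = e^{-\pi y} \in (0,1)$ for $y > 0$, every term in
\[
\frac{\partial Y_1(yi)}{\partial y} = -\frac{1}{y^2} - \sum_{n=1}^\infty \frac{16\pi^2 n^2 r^{2n}}{(1-r^{2n})^2}
\]
is strictly negative: the leading $-1/y^2$ is obviously negative, and each summand is negative because $r^{2n} > 0$ and $(1-r^{2n})^2 > 0$. The series converges uniformly on compact subsets of $(0,\infty)$ since for $y$ bounded away from $0$ the factor $r^{2n}$ decays geometrically. Therefore $\frac{\partial Y_1(yi)}{\partial y} < 0$ for all $y > 0$, so $y \mapsto Y_1(yi)$ is strictly monotonically decreasing on $(0, \infty)$.

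Since Lemma \ref{l-imaginary} gives $Y_1(i) = 0$, strict monotonicity forces $Y_1(yi) > 0$ for $y \in (0,1)$ and $Y_1(yi) < 0$ for $y \in (1, \infty)$, and $y = 1$ is the unique critical point of $y \mapsto f_1(yi)$. There is essentially no obstacle here; the only minor verification is the term-by-term differentiation of \eqref{Y1-im} that yields \eqref{Y1'-im}, which is justified by the locally uniform convergence of both series on $(0,\infty)$ (alternatively, one may appeal to the symmetry $Y_1(yi) = -y^{-2} Y_1(i/y)$ of Lemma \ref{l-imaginary} to restrict attention to $y \geq 1$ and then use monotonicity only on $[1,\infty)$, which would also yield the result via the reflection $y \leftrightarrow 1/y$).
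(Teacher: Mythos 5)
Your proof is correct, but it takes a genuinely different route from the paper. The paper proves this lemma with a harmonic-function argument in the style of Chen and Oshita: it introduces $A(z)=\arg(z\eta(z))$, computes its boundary values on the imaginary axis, on the line $\re z=\frac{1}{2}$, on the arc of the unit circle, and at infinity, applies the maximum principle to conclude $A>\frac{\pi}{2}$ in $U$, and then uses the Hopf lemma together with a Cauchy--Riemann equation to get $Y_1(yi)<0$ for $y>1$, finishing with the inversion $y\mapsto 1/y$. You instead observe that the recorded series \eqref{Y1'-im} for $\frac{\partial Y_1(yi)}{\partial y}$ is manifestly negative term by term on all of $(0,\infty)$ (a fact the paper itself notes later, in \eqref{T12} of the appendix, but does not exploit here), so $y\mapsto Y_1(yi)$ is strictly decreasing; combined with $Y_1(i)=0$ from Lemma \ref{l-imaginary} this gives both the sign pattern and the uniqueness of the critical point at once. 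Your argument is shorter and entirely elementary, and it works precisely because the $q$-series for $\partial_y Y_1(yi)$ has all terms of one sign; the analogous series \eqref{Y0'-im} for $Y_0$ alternates, which is why the paper's Lemma \ref{l-f0-im} cannot be dispatched the same way and requires the delicate estimates it contains. The paper's maximum-principle route is heavier but yields the global inequality $A(z)>\frac{\pi}{2}$ on $U$, i.e.\ information about $f_1$ off the imaginary axis, whereas your argument is confined to the axis --- which is all the lemma asks for. The only point to make explicit is the justification of term-by-term differentiation of \eqref{Y1-im}, which you correctly attribute to locally uniform (geometric) convergence on compact subsets of $(0,\infty)$; since the paper states \eqref{Y1'-im} as a standing formula, this is not a gap.
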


\begin{proof}
  Lemma \ref{l-imaginary} asserts that $y=1$ is a critical point of
  $y \rightarrow f_1(y i)$, $y>0$, i.e. $Y_1(i)=0$. Define 
\begin{equation}
 A(z) = \arg (z \eta(z)) = \arg (z) + \arg (\eta(z))
\end{equation}
Note  
\begin{equation}
  \label{log-arg}
  \re(\log(z \eta(z))) = \log |z \eta(z)|, \ \ \im(\log(z
  \eta(z)))=A(z).
  \end{equation}
Hence $A$ is a harmonic function. We consider $A(z)$ in
$U$ and and its closure $\overline{U}_{\mathbb{H}}$ given in
\eqref{U} and \eqref{U-closure} respectively. 

On the imaginary axis, for $y>0$, since $\eta(yi)$ is real and positive,
\begin{equation}
  \label{A-im}
  A(yi) = \arg(yi) + \arg(\eta(yi)) = \frac{\pi}{2} + 0 =
  \frac{\pi}{2},
  \end{equation}
  On the line $x=\frac{1}{2}$, $\arg(\eta(z)) = \frac{\pi}{6}$
  since $e^{2\pi n (\frac{1}{2} + yi)i}$ is real, and 
  \begin{equation}
    A\Big(\frac{1}{2} + yi\Big) = \arctan (2y) + \frac{\pi}{6}.
    \end{equation}
In particular
\begin{equation}
  \label{A-x=1/2}
  A\Big(\frac{1}{2} + yi\Big) > \frac{\pi}{2} \  \ \mbox{if } y >
  \frac{\sqrt{3}}{2}.
  \end{equation}
As $y \rightarrow \infty$ in $z=x+yi$,
\begin{equation}
  \label{A-infty}
  \lim_{y\rightarrow \infty} A(x+yi) = \frac{\pi}{2} + \frac{\pi x}{3} \ \
  \mbox{uniformly with respect to } x \in \Big [0, \frac{1}{2} \Big ].
  \end{equation}
Now consider $A$ on the unit circle.  
By the functional equation \eqref{inv-2}
one has, in polar coordinates $z = r e^{i \theta}$, 
\[ \log (r |\eta(r e^{i\theta}) |) = \log (\frac{1}{r} |\eta
  (-\frac{1}{r} e^{-i\theta}) | ). \]
By the definition of $\eta$, one sees  that $|\eta (- \bar{\zeta})|
= | \eta(\zeta) |$ for all $\zeta \in \mathbb{H}$. Therefore
\[  \log (r |\eta(r e^{i\theta}) |) = \log (\frac{1}{r} |\eta
  (\frac{1}{r} e^{i\theta}) | ). \]
Differentiating the last equation with respect to $r$ and setting
$r=1$ afterwards, one derives 
\[ \frac{\partial }{\partial r} \Big |_{r=1} \log (r |\eta(r
  e^{i\theta}) |) =0. \]
 One of the Cauchy-Riemann equations in polar coordinates for $\log (z
 \eta(z))$ is 
\[ \frac{\partial }{\partial r} \re(\log (z\eta(z))) =
  \frac{1}{r} \frac{\partial}{\partial \theta} \im(\log(z\eta(z))). \]
By \eqref{log-arg}
\[ \frac{\partial }{\partial \theta} A(e^{i\theta}) =0, \]
namely $A$ is constant on the unit circle. Since
$\eta(i)$ is real and positive, $A(i) = \frac{\pi}{2}$. Hence 
\begin{equation}
  \label{A-arc}
  A(z) =\frac{\pi}{2} \ \ \mbox{if} \ |z|=1 \ \mbox{and} \
  \frac{\pi}{3} \leq \arg z \leq \frac{\pi}{2} .
  \end{equation}

By \eqref{A-im}, \eqref{A-x=1/2}, \eqref{A-infty},
\eqref{A-arc}, and the maximum principle, 
\begin{equation}
  A(z) > \frac{\pi}{2}, \ \ z \in U;
  \end{equation}
by the Hopf lemma,
\begin{equation}
  \frac{\partial}{\partial x} \Big |_{x=0, y>1} A(z) >0.
  \end{equation}
By a Cauchy-Riemann equation
\begin{equation}
  Y_1(yi)   = - \frac{\partial}{\partial x} \Big |_{x=0, y>1} A(z) <0,
  \ y \in (1,\infty).
  \label{y>1}
  \end{equation}

For $y \in (0,1)$, by Lemma \ref{l-imaginary},
\begin{equation}
  Y_1(yi) = \Big (-\frac{1}{y^2} \Big )
  Y_1\Big (\frac{i}{y} \Big)>0, \ \ y \in (0,1).
  \label{y<1}
  \end{equation}
This completes the proof.
  \end{proof}

\begin{lemma}
\label{l-f0-im}
The function $ y \rightarrow f_0(yi)$, $y>0$,
has three critical points at $\frac{\sqrt{3}}{3}$, $1$, and $\sqrt{3}$.
Moreover
\begin{align*}
  Y_0(yi)  &>  0 \ \mbox{if} \ y \in (0, \sqrt{3}/3) \\
  Y_0(yi)  &<  0 \ \mbox{if} \ y \in (\sqrt{3}/3, 1) \\
  Y_0(yi)  &>  0 \ \mbox{if} \ y \in (1,\sqrt{3}) \\
  Y_0(yi)  &<  0 \ \mbox{if}  \ y \in (\sqrt{3}, \infty).
\end{align*}
\end{lemma}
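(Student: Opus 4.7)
The existence of the three critical points follows from symmetries already collected. By Lemma \ref{l-imaginary} (valid for any $b\in\mathbb{R}$), $f_0(yi)=f_0(i/y)$, so $Y_0(i)=0$ and the zero set of $y\mapsto Y_0(yi)$ is stable under the involution $y\mapsto 1/y$, whose only fixed point in $(0,\infty)$ is $y=1$. The definitions \eqref{fb} and \eqref{eta} yield $f_0(z)=f_1\bigl(\tfrac{z+1}{2}\bigr)$, and the chain rule gives $Y_0(yi)=\tfrac{1}{2}Y_1\bigl(\tfrac{1+yi}{2}\bigr)$. At $y=\sqrt{3}$ we have $\tfrac{1+\sqrt{3}\,i}{2}=e^{i\pi/3}$, the fixed point of the order-three M\"obius transformation $w\mapsto 1-1/w$, which lies in $\Gamma$ (it is the composition of $w\mapsto -1/w$ and $w\mapsto w+1$). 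Its differential at the fixed point is multiplication by $1/\rho^2=e^{-2\pi i/3}$, a non-trivial real rotation; by the $\Gamma$-invariance of $f_1$ (Lemma \ref{l-invariance}), the equation $df_1|_\rho\circ d\gamma|_\rho=df_1|_\rho$ forces $df_1|_\rho=0$, hence $Y_1(e^{i\pi/3})=0$ and $Y_0(\sqrt{3}\,i)=0$. Applying $y\mapsto 1/y$ to $y=\sqrt{3}$ then gives $Y_0\bigl(\tfrac{\sqrt{3}}{3}\,i\bigr)=0$, the third critical point.

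From the series \eqref{Y0-im} one reads off $Y_0(yi)=1/y-\pi/6+O(e^{-\pi y})$, so $Y_0(yi)\to+\infty$ as $y\to 0^+$ and $Y_0(yi)\to -\pi/6<0$ as $y\to\infty$. Once uniqueness of the three zeros on $(0,\infty)$ is established, continuity forces exactly the alternating sign pattern $+,-,+,-$ on the four subintervals claimed in the lemma. By the involution $y\mapsto 1/y$, uniqueness reduces to showing that $y=1$ and $y=\sqrt{3}$ are the only zeros on $[1,\infty)$. For $y\geq\sqrt{3}$, where $r=e^{-\pi y}\leq e^{-\pi\sqrt{3}}$ is already quite small, explicit geometric-series bounds on \eqref{Y0'-im} make it routine to verify that the singular term $-1/y^2$ dominates the exponentially decaying alternating sum, so $\partial_y Y_0(yi)<0$ on $[\sqrt{3},\infty)$; strict monotonicity together with $Y_0(\sqrt{3}\,i)=0$ and $Y_0(yi)\to-\pi/6$ rules out any further zero in $[\sqrt{3},\infty)$ and pins the sign as negative on $(\sqrt{3},\infty)$.

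The main obstacle is the interval $[1,\sqrt{3}]$, where $r=e^{-\pi y}$ ranges over $(e^{-\pi\sqrt{3}},e^{-\pi})$---small but not so small that the alternating series can be na\"ively discarded. My plan is to prove strict concavity $\partial_y^2 Y_0(yi)<0$ on $[1,\sqrt{3}]$ through a careful analysis of \eqref{Y0''-im}, grouping the series into even-$n$ and odd-$n$ blocks and estimating their partial cancellation against the leading $2/y^3$ term. Combined with the boundary data $Y_0(i)=Y_0(\sqrt{3}\,i)=0$, strict concavity forces $Y_0(yi)>0$ on the open interval $(1,\sqrt{3})$ and precludes any third zero. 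A structurally cleaner alternative, avoiding the term-by-term estimates, is to use the identity $Y_0(yi)=-\tfrac{\pi}{6}E_2^{*}\bigl(\tfrac{1+yi}{2}\bigr)$, where $E_2^{*}(\tau)=E_2(\tau)-3/(\pi\im\tau)$ is the weight-two modular completion of the Eisenstein series $E_2$, and to enumerate the zero set of $E_2^{*}$ on the line $\re\tau=\tfrac{1}{2}$ via the $SL_2(\mathbb{Z})$-orbit structure of the elliptic fixed points $\{i,e^{i\pi/3}\}$; the technical burden then becomes verifying that no other orbit representatives land on this line, which in turn reduces to elementary diophantine conditions like $2(ac+bd)=c^2+d^2$ with $ad-bc=1$.
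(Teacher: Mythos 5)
Your overall strategy coincides with the paper's: get the critical points from symmetries, prove $\partial_y Y_0(yi)<0$ on $[\sqrt{3},\infty)$, prove concavity of $Y_0(yi)$ on $[1,\sqrt{3}]$, and transfer everything to $(0,1)$ by the involution $y\mapsto 1/y$. Your derivation of $Y_0(\sqrt{3}\,i)=0$ is actually cleaner than the paper's: you observe that $\frac{1+\sqrt{3}i}{2}=e^{i\pi/3}$ is an elliptic fixed point of order three, so $\Gamma$-invariance of $f_1$ forces $df_1=0$ there, whereas the paper reaches the same conclusion by chasing two explicit transforms and using $X_1(\tfrac12+vi)=0$. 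That part is a genuine (small) improvement. One minor logical slip: ``three zeros plus continuity forces the alternating sign pattern'' is not automatic --- with the boundary behaviors $+\infty$ and $-\pi/6$ you could a priori have three zeros with only one sign change --- but your subsequent concavity/monotonicity claims would repair this, so it is not a real defect.

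The genuine gap is that the two quantitative inequalities that carry the whole lemma are announced rather than proved, and the harder one is not ``routine.'' For $y\ge\sqrt{3}$ your claim is fine in spirit, but you still must verify that the series in \eqref{Y0'-im} is alternating with decreasing absolute values (the paper's estimate \eqref{alt-3}) before you may bound it by its first term; a naive geometric bound $\sum n^2 r^n/(1-r)^2$ is not obviously below $1/3$ without some computation, so something must actually be written down. Much more seriously, the concavity $\partial_y^2 Y_0(yi)<0$ on $[1,\sqrt{3}]$ is exactly the crux of the lemma, and here $r=e^{-\pi y}$ is only of size $0.04$, so the positive term $2/y^3$ and the leading negative term $\approx -4\pi^3 r$ are of comparable magnitude near $y=1$; the paper needs an alternating-series argument for \eqref{alt-2}, a two-term truncation, the closed-form summation \eqref{sum-1}, explicit constants $A_1,A_2$, and a convexity-plus-endpoint-evaluation argument for the auxiliary function $\kappa$ to close this. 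Your proposal offers only the intention to do ``a careful analysis,'' so as written the proof is incomplete precisely where the difficulty lies. Finally, the proposed $E_2^{*}$ shortcut is unsound as stated: $Y_0(yi)=-\tfrac{\pi}{6}\re E_2^{*}\bigl(\tfrac{1+yi}{2}\bigr)$ is a real-valued function of two real variables whose zero set is a union of curves, not just the $SL_2(\mathbb{Z})$-orbit of the elliptic points, so enumerating orbit representatives on the line $\re\tau=\tfrac12$ cannot by itself locate all zeros of the restriction; you would be smuggling in the very monotonicity/concavity facts you are trying to avoid.
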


\begin{proof}
  The transfrom $z \rightarrow \frac{z-1}{2z-1} \in \Gamma$ maps
  $\frac{1}{2} + y i$ to $\frac{1}{2} + \frac{i}{4y}$, so
  \[ \log | y \eta (\frac{1}{2} + yi) |
  = \log | \frac{1}{4y} \eta(\frac{1}{2} + \frac{i}{4y})|
  \]
  Differentiation with respect to $y$ shows that
  \begin{equation} \label{Y1-1/2-reflect}
    Y_1\Big (\frac{1}{2}+ yi \Big )
    = - \frac{1}{4y^2} Y_1\Big (\frac{1}{2}+ \frac{i}{4y}\Big ).
  \end{equation}
  One consequence of \eqref{Y1-1/2-reflect} is that
  \begin{equation}
    \label{c-1}
    Y_0(i)=\frac{1}{2} Y_1\Big (\frac{1}{2}+ \frac{i}{2} \Big ) =0;
    \end{equation}
  namely that $1$ is a critical point of $y \rightarrow f_0(yi)$.

  The combined transform of
  $z \rightarrow w=-\frac{1}{z} \in \Gamma$ and $w \rightarrow - \bar{w}$
  maps the line $\frac{1}{2} + yi$ to $\frac{2}{4y^2+1} + \frac{4y}{4y^2+1} i$,
  the unit circle centered at $1$. The invariance  of $f_1$ under
  this transform yields
  \[ f_1 \Big (\frac{1}{2}+y i \Big)
    = f_1 \Big (\frac{2}{4y^2+1}+ \frac{4yi}{4y^2+1} \Big )\]
  Differentiation with respect to $y$ shows that
  \[ Y_1\Big (\frac{1}{2}+yi \Big) = X_1(\frac{2}{4y^2+1}+ \frac{4yi}{4y^2+1})
  \frac{\partial }{\partial y} \Big ( \frac{2}{4y^2+1} \Big )
  +  Y_1(\frac{2}{4y^2+1}+ \frac{4yi}{4y^2+1})
  \frac{\partial }{\partial y} \Big ( \frac{4y}{4y^2+1} \Big ). \]
  By \eqref{X1}
  \[ X_1 \Big (\frac{1}{2}+v i \Big) =0, \ \ v>0. \]
  Hence, with $y=\frac{\sqrt{3}}{2}$, one deduces
  \begin{equation}
    \label{c-sqrt3}
    Y_0(\sqrt{3}\, i)=\frac{1}{2} Y_1(\frac{1}{2}+ \frac{\sqrt{3}}{2}i) = 0,
    \end{equation}
  i.e. $\sqrt{3}$ is a critical point of $y \rightarrow f_0(yi)$.
  By \eqref{Y1-1/2-reflect}, $\frac{\sqrt{3}}{3}$ is also a critical point
  of $y \rightarrow f_0(y i)$.

  Now show that
  \begin{equation} 
    \label{Y0-im-4}
    Y_0(yi) <0, \ \mbox{if} \ y \in (\sqrt{3}, \infty).
  \end{equation}
  This fact was established by Chen and Oshita in \cite{chenoshita-2}.
  Here we give a
  more direct alternative proof.
  
  Consider the expression for $\frac{\partial Y_0(y i)}{\partial y}$ in 
\eqref{Y0'-im}. Note that the series
\begin{equation}
  \label{series-1}
    \sum_{n=1}^\infty \frac{n^2(-r)^n}{(1-(-r)^n)^2}
  \end{equation}
is alternating. The only nontrivial property to verify is that the
absolute values of the terms decrease, and this follows from the following
estimate.
\begin{align}
  \frac{n^2r^n}{(1-(-r)^n)^2} - \frac{(n+1)^2r^{n+1}}{(1-(-r)^{n+1})^2}
  & = \frac{(n+1)^2 r^{n+1}}{(1-(-r)^n)^2}
  \Big ( \frac{n^2}{(n+1)^2 r} - \frac{(1-(-r)^n)^2}{(1-(-r)^{n+1})^2} \Big )
  \nonumber \\ & \geq \frac{(n+1)^2 r^{n+1}}{(1-(-r)^n)^2}
  \Big ( \frac{e^{\sqrt{3}\pi}}{4} - \frac{(1+e^{-\sqrt{3} \pi})^2}
       {(1-e^{-2\sqrt{3}})^2} \Big ) \nonumber \\
       & = \frac{(n+1)^2 r^{n+1}}{(1-(-r)^n)^2} \  \times 56.68...  >0.
       \label{alt-3}  
  \end{align}
This allows us to estimate  $\frac{\partial Y_0(y i)}{\partial y}$  as follows
\begin{align}
  \frac{\partial Y_0(y i)}{\partial y} & < -\frac{1}{y^2}
  + \frac{ 4\pi^2 e^{-\pi y}} {(1+e^{-\pi y})^2}  \nonumber \\
  & < -\frac{1}{y^2} +  4\pi^2 e^{-\pi y} \nonumber \\
  & = \frac{1}{y^2}  \Big ( -1 + 4\pi y^2 e^{-\pi y}   \Big ) \nonumber \\
  & \leq  \frac{1}{y^2}  \Big ( -1 + 4\pi (\sqrt{3})^2 e^{-\pi \sqrt{3}} \Big )
     \nonumber \\
  &  \leq  \frac{1}{y^2} \ \times ( - 0.8388...) < 0. \label{Y0'-im-3}
  \end{align}
Here to reach the fourth line, one notes that
\[ (-1 + 4\pi y^2 e^{-\pi y})' = 4 \pi e^{-\pi y} y (2- \pi y) <0, \ \
\mbox{if} \ y > \sqrt{3}. \]
Since $Y_0(\sqrt{3} \, i)=0$, \eqref{Y0'-im-3} implies \eqref{Y0-im-4}.

  By \eqref{Y0-im-4} and Lemma \ref{l-imaginary}, one deduces
  \begin{equation} \label{Y0-im-1}
    Y_0(yi) > 0, \ \ \mbox{if} \ \ y \in (0,\frac{\sqrt{3}}{3}).
  \end{equation}

   Next consider $Y_0(yi)$ for $y \in (1, \sqrt{3})$.
 By \eqref{Y0''-im}
  \begin{equation}
    \frac{\partial^2  Y_0(y i)}{\partial y^2} 
  = \frac{2}{y^3} +  \sum_{n=1}^\infty \frac{4 \pi^3 n^3 ((-r)^n +
  r^{2n})}{(1- (-r)^n)^3 }, \ \ r=e^{-\pi y}.
  \label{Y1''-1/2}
  \end{equation}
  It turns out that the series
  \begin{equation}
   \sum_{n=1}^\infty  \frac{n^3 (-r)^n} {(1- (-r)^n)^3 }
    \label{alt-2}
  \end{equation}
  which is part of \eqref{Y1''-1/2}
  is alternating. To see that
  the absolute values of the terms in \eqref{alt-2} decrease, note 
  \[ \qquad \frac{n^3 r^n} {(1- (-r)^n)^3 }
    - \frac{(n+1)^3 r^{n+1}} {(1- (-r)^{n+1})^3 }  
     = \frac{(n+1)^3 r^{n+1}}{(1-(-r)^n)^3}
    \Big [ \frac{n^3}{(n+1)^3 r} - \frac{(1-(-r)^n)^3}{(1-(-r)^{n+1})^3}
      \Big ]  
    \]
  and it suffices to show that the quantity in the brackets is positive.
  For $y > 1$,
  \[    \frac{n^3}{(n+1)^3 r} - \frac{(1-(-r)^n)^3}{(1-(-r)^{n+1})^3}  
      >  \frac{e^{\pi}}{8} -
       \frac{(1 + e^{-\pi})^3 }
      {  (1- e^{-2\pi})^3} 
       =  2.8925... - 1.1417...  \ > \ 0. \]
An upper bound for \eqref{alt-2} is available if one chooses two terms
from the series:
\begin{equation}
 \sum_{n=1}^\infty  \frac{n^3 (-r)^n} {(1- (-r)^n)^3 }
  < \frac{-r}{(1+r)^3} + \frac{8 r^2}{(1-r^2)^3}. \label{alt-2-2}
\end{equation}
Then \eqref{Y1''-1/2} becomes
\begin{align}
  \frac{\partial^2 Y_0(y i) }{\partial y^2}   & <
  \frac{2}{y^3} - \frac{4 \pi^3 r}{(1+r)^3}
  + \frac{32\pi^3 r^2}{(1-r^2)^3}
  + 4\pi^3 \sum_{n=1}^\infty \frac{ n^3 r^{2n}}{(1- (-r)^n)^3 } \nonumber \\
  & < \frac{2}{y^3} - \frac{4 \pi^3 r}{(1+r)^3}
  + \frac{32\pi^3 r^2}{(1-r^2)^3}
  + \frac{4 \pi^3}{(1-r)^3} \sum_{n=1}^\infty n^3 r^{2n} \nonumber \\
  &=  \frac{2}{y^3} - \frac{4 \pi^3 r}{(1+r)^3}
  + \frac{32\pi^3 r^2}{(1-r^2)^3} +
  \frac{4 \pi^3 r^2 (1+4r^2+r^4) }{(1-r)^3(1-r^2)^4} \nonumber \\
  & < \frac{2}{y^3} - \Big [ \frac{4\pi^3}{(1+e^{-\pi})^3} \Big ] r
  + \Big [ \frac{32\pi^3}{(1-e^{-2\pi})^3}
    +\frac{4\pi^3 (1+4e^{-2\pi} + e^{-4\pi})}{(1-e^{-\pi})^3
      (1-e^{-2\pi})^4}\Big ] s^2 \nonumber  \\
  &=  \frac{2}{y^3} - A_1 r + A_2 r^2  \nonumber \\
  &= r \, \kappa(y), \label{Y1''-1/2-2}
\end{align}
where we have used the summation formula
\begin{equation} \label{sum-1} \sum_{n=1}^\infty n^3 t^n =
t \Big ( t \Big [ t \Big ( \frac{1}{1-t} \Big )_t \Big ]_t \Big )_t 
= \frac{t(1+4t+t^2)}{(1-t)^4}, \ \ |t|<1
\end{equation}
to reach the third line, $A_1$ and $A_2$ are given by
\begin{equation}
  A_1 =  \frac{4\pi^3}{(1+e^{-\pi})^3}=109.24...      , \ \ 
  A_2 = \frac{32\pi^3}{(1-e^{-2\pi})^3}
    +\frac{4\pi^3 (1+4e^{-2\pi} + e^{-4\pi})}{(1-e^{-\pi})^3
      (1-e^{-2\pi})^4} = 1,141.50...
  \label{A1A2}
\end{equation}
and $\kappa$ is 
\begin{equation}
  \kappa(y) = \frac{2}{y^3 r} - A_1 + A_2 r
  = \frac{2e^{\pi y} }{y^3} - A_1 + A_2 e^{-\pi y}. 
  \label{kappa}
\end{equation}
Regarding $\kappa$, one finds
\begin{align*}
  \kappa''(y) 
  & = e^{\pi y} \big ( 2 \pi^2 y^{-3} -12 \pi y^{-4} + 24 \pi y^{-5} \big )
  + \pi^2 A_2 e^{-\pi y} \\
  & = 2 e^{\pi y} y^{-5} \big ( (\pi y - 3)^2 +3  \big )+
   \pi^2 A_2 e^{-\pi y} \ > \ 0, 
\end{align*}
and
\[ \kappa(1) = -13.63...<0, \ \
   \kappa(\sqrt{3}) = -15.47...<0. \]
Hence
\begin{equation}
  k(y) <0, \ \ y \in [ 1, \sqrt{3}], 
  \label{kappa-2}
  \end{equation}
and by \eqref{Y1''-1/2-2}
\begin{equation}
  \frac{\partial^2  Y_0(y i) }{\partial y^2}  
  <0, \ \ y \in [1, \sqrt{3}]. 
  \label{Y1''-1/2-3}
  \end{equation}
Since  $Y_0 (1)
=Y_0(\sqrt{3})=0$ by \eqref{c-1} and
\eqref{c-sqrt3}, \eqref{Y1''-1/2-3} implies 
  \begin{equation}
    \label{Y0-im-3}
    Y_0(yi) > 0, \ \ \mbox{if} \ \ y \in
    (1, \sqrt{3}).
  \end{equation}

  By \eqref{Y1-1/2-reflect}, \eqref{Y0-im-3} implies
   \begin{equation}
    \label{Y0-im-2}
    Y_0(yi) < 0, \ \ \mbox{if} \ \ y \in
    \Big (\frac{\sqrt{3}}{3},1 \Big ).
   \end{equation}
   The lemma follows from \eqref{Y0-im-1}, \eqref{Y0-im-2},
   \eqref{Y0-im-3}, and \eqref{Y0-im-4}.
\end{proof}

For $b$ between $(0,1)$, the next lemma shows  that the shape of $f_b$
is similar to $f_0$ if $b$ is small and similar to $f_1$ if $b$ is large.
The borderline is  $B$ given by 
\begin{equation}
  \label{B}
  B = \frac{\frac{\partial Y_0(i)}{\partial y}} 
  {\frac{\partial Y_0(i)}{\partial y} - \frac{\partial Y_1(i)}{\partial y}}
 = \frac{0.2982...}{ 0.2982...- (-1.298...)} = 0.1867...
\end{equation}
 The numerical values in \eqref{B}
 are computed from the series \eqref{Y1'-im} and \eqref{Y0'-im}. 
 One interpretation of $B$ is that if $b=B$, the second derivative of
 $y \rightarrow f_B(y i)$ vanishes at $y=1$, i.e. 
 \begin{equation}
   \label{interp}
     \frac{\partial Y_B(i)}{\partial y}=0.
   \end{equation}

\begin{lemma}
\label{l-fb-im}
The following properties hold for $y \rightarrow f_b(bi)$, $y \in (0,\infty)$. 
\begin{enumerate}
\item When $b \in [0, B)$, the function $y \rightarrow f_b(yi)$, $y>0$,
  has exactly three critical points at $\frac{1}{q_b}$, $1$, and
  $q_b$, where $q_b \in (1, \sqrt{3}]$. Moreover
    \begin{enumerate}
    \item $Y_b(yi) >0$ if $ y \in (0, \frac{1}{q_b})$,
    \item $Y_b(yi) < 0$ if $ y \in (\frac{1}{q_b},1)$,
    \item $Y_b(yi) >0$ if $ y \in (1, q_b)$,
      \item  $Y_b(yi) < 0$ if $ y \in (q_b,\infty)$.
      \end{enumerate}
  As $b$ increases from $0$ to $B$,
   $q_b$ decreases from $\sqrt{3}$ towards $1$.
  \item When $b \in [B, 1]$, the function $y \rightarrow f_b(0,y)$, $y>0$,
 has only one critical point at $1$, and
   \begin{enumerate}
    \item $Y_b(yi)>0$ if $y \in (0,1)$,
      \item $Y_b(yi)<0$ if $y \in (1,\infty)$.
      \end{enumerate}
\end{enumerate}
\end{lemma}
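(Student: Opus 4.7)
The plan is to combine the reflection symmetry $Y_b(yi)=-y^{-2}Y_b(i/y)$ from Lemma~\ref{l-imaginary} with the sign information on $Y_0$ and $Y_1$ from Lemmas~\ref{l-f1-im} and~\ref{l-f0-im} to reduce everything to a single monotonicity statement on $(1,\sqrt{3})$. Since $Y_b(i)=0$ for every $b$, the point $y=1$ is always a critical point of $y\mapsto f_b(yi)$, and the reflection pairs any other critical point $y\in(1,\infty)$ with $1/y\in(0,1)$, so it suffices to understand $Y_b$ on $(1,\infty)$.

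On $(\sqrt{3},\infty)$ both $Y_0(yi)$ and $Y_1(yi)$ are strictly negative, hence $Y_b(yi)<0$ there for every $b\in[0,1]$ and no critical point arises in that range. This reduces the analysis to $(1,\sqrt{3})$, where $Y_0(yi)>0$ and $Y_1(yi)<0$. On this interval $Y_b(yi)=0$ is equivalent to
\[
R(y):=-\frac{Y_0(yi)}{Y_1(yi)}=\frac{b}{1-b}.
\]
The right endpoint gives $R(\sqrt{3})=0$ since $Y_0(\sqrt{3}\,i)=0\ne Y_1(\sqrt{3}\,i)$. At the left endpoint both $Y_0$ and $Y_1$ vanish at $i$, so L'Hopital produces $\lim_{y\to 1^+}R(y)=-\partial_y Y_0(i)/\partial_y Y_1(i)$, and the definition of $B$ in \eqref{B} says precisely that this limit equals $B/(1-B)$.

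The main obstacle is to show that $R$ is strictly decreasing on $(1,\sqrt{3}]$. Since $Y_1<0$ on that interval, $R'<0$ is equivalent to the Wronskian-type inequality
\[
\partial_y Y_0(yi)\,Y_1(yi)-Y_0(yi)\,\partial_y Y_1(yi)>0,\qquad y\in(1,\sqrt{3}).
\]
Both factors $Y_0$ and $Y_1$ vanish linearly at $y=1$ and the two leading terms in this expression cancel, so pointwise bounds on the $q$-series \eqref{Y1-im}--\eqref{Y0'-im} cannot succeed by themselves. The strategy is to expand around $y=1$ using higher $y$-derivatives of $Y_0$ and $Y_1$ (to get the correct sign at second order) and then, away from $y=1$, to regroup the series by parity of $n$ so as to exploit the alternating-series mechanism already used in \eqref{alt-3} and \eqref{alt-2}; each side is then controlled by a short partial sum plus a tail estimate. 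This is the technical heart of the paper and, following the introduction's remark, we defer the detailed estimates to the appendix.

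Granting the monotonicity of $R$, the lemma follows at once. For $b\in[0,B)$ we have $b/(1-b)\in[0,B/(1-B))$, so $R(y)=b/(1-b)$ has a unique solution $q_b\in(1,\sqrt{3}]$, with $q_0=\sqrt{3}$ and $q_b\to 1^+$ as $b\to B^-$; strict monotonicity of $R$ makes $b\mapsto q_b$ strictly decreasing. Since $Y_1<0$ on $(1,\sqrt{3})$, the sign of $Y_b(yi)$ there is that of $R(y)-b/(1-b)$, giving $Y_b(yi)>0$ on $(1,q_b)$ and $Y_b(yi)<0$ on $(q_b,\sqrt{3})$; combining with $Y_b<0$ on $(\sqrt{3},\infty)$ yields the sign pattern on $(1,\infty)$, and the reflection transfers it to $(0,1)$ with a companion critical point at $1/q_b$, proving case (1). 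For $b\in[B,1]$ we have $b/(1-b)\geq B/(1-B)>R(y)$ throughout $(1,\sqrt{3}]$, so $Y_b(yi)<0$ on all of $(1,\infty)$, leaving $y=1$ as the unique critical point, and the reflection gives $Y_b(yi)>0$ on $(0,1)$, proving case (2).
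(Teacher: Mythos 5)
Your argument is correct and follows essentially the same route as the paper: you write $Y_b=bY_1+(1-b)Y_0$ and reduce everything to the strict monotonicity on $(1,\sqrt{3})$ of the ratio $Y_0(yi)/Y_1(yi)$ (your $R$ is its negative), which is exactly the paper's \eqref{quo-mono} and is likewise deferred to the appendix, with the endpoint values $R(\sqrt{3})=0$ and $\lim_{y\to 1^+}R(y)=B/(1-B)$ pinning down the threshold $B$ and the reflection $Y_b(yi)=-y^{-2}Y_b(i/y)$ handling $(0,1)$. The only divergence is in your sketched plan for the deferred estimate: the paper does not attack the first-order Wronskian $\partial_yY_0\,Y_1-Y_0\,\partial_yY_1$ directly (it vanishes to second order at $y=1$), but instead invokes a monotone form of L'Hospital's rule to pass to $\partial_yY_0/\partial_yY_1$ and hence to the quantity $T(y)$ in \eqref{T}, which has only a simple zero at $y=1$.
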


\begin{proof}
  The shapes of $f_1$ and $f_0$ are already established in Lemmas
  \ref{l-f1-im} and \ref{l-f0-im}. 
  These lemmas imply that
  $y=1$ is a critical point of $y \rightarrow f_b(y i)$, $y>0$,  i.e.
  \begin{equation}
    \label{crit-1}
    Y_b(i)=0, \ \ \mbox{for all} \ b \in [0,1],
    \end{equation}
  and moreover, 
  \begin{equation}
    Y_b(yi)<0, \ \ \mbox{if} \ b \in (0,1] \ \mbox{and} \ y \geq \sqrt{3}.
  \end{equation}

  To study $Y_b(yi)$ for $y \in (1, \sqrt{3})$ and $b \in (0,1)$, write $Y_b =
  b Y_1 + (1-b) Y_0$ as
  \begin{equation}
    \label{Yb-alt}
    Y_b(yi) = b Y_1(yi) \Big ( 1+ \big ( \frac{1-b}{b} \big )
     \frac{Y_0(yi)}{Y_1(yi)} \Big ).
  \end{equation}
  Recall $Y_1(y i) <0$ for $y>1$ from Lemma \ref{l-f1-im}. 
  Regarding the quotient $\frac{Y_0(yi)}{Y_1(yi)}$,
  since $Y_0(i)=Y_1(i)=0$, $\frac{Y_0(i)}{Y_1(i)}$ is understood as the limit
  \begin{equation}
    \label{LHospital}
    \frac{Y_0(i)}{Y_1(i)} = \lim_{y \rightarrow 1} \frac{Y_0(yi)}{Y_1(yi)}
    = \frac{\frac{\partial Y_0(i)}{\partial y}}
    {\frac{\partial Y_1(i)}{\partial y}}
     = \frac{0.2982...}{-1.298... }= -0.2297...<0
  \end{equation}
  evaluated by L'Hospital's rule.
    Since $Y_0(\sqrt{3} \,i) =0$ by Lemma \ref{l-f0-im},
  \begin{equation}
    \label{quo-sqrt3}
     \frac{Y_0(\sqrt{3}\, i)}{Y_1(\sqrt{3}\,i)} =0.
  \end{equation}
  Lemmas \ref{l-f1-im} and \ref{l-f0-im} also assert that
  $Y_1(yi) < 0$ and $Y_0(yi) >0$ if $y \in (1,\sqrt{3})$, so 
  \begin{equation}
    \label{quo-between}
     \frac{Y_0(y i)}{Y_1(yi)} < 0, \ \  y \in (1, \sqrt{3}).
  \end{equation}
  However the most important property of this quotient is its monotonicity
  on $(1, \sqrt{3})$, namely
  \begin{equation}
    \label{quo-mono}
    \frac{\partial }{\partial y} \Big ( \frac{Y_0(y i)}{Y_1(yi)} \Big )
    > 0, \ \ y \in (1, \sqrt{3}).  
    \end{equation}
  The proof of \eqref{quo-mono} is long and brute force. We leave it in the
  appendix. The first time reader may wish to skip this part. 

  Return to \eqref{Yb-alt}. Since $Y_1(yi) <0$ on $(1, \infty)$ and
  $\frac{1-b}{b} \in (0, \infty)$ when $b \in (0,1)$,
  \eqref{quo-mono} implies that
  $Y_b(yi)$ can have at most one zero in $(1, \sqrt{3})$
  at which $Y_b(y i)$ changes sign.  Because of \eqref{quo-sqrt3},
  \begin{equation}
    1 +    \Big ( \frac{1-b}{b} \Big )
    \frac{Y_0(\sqrt{3}\, i)}{Y_1(\sqrt{3}\,i)} = 1+0>0, \ \ b \in (0,1). 
  \end{equation}
  Hence $Y_b(y i)$ admits a zero in $(1, \sqrt{3})$  if and only if
  \begin{equation}
    \label{iff}
      1+ \Big ( \frac{1-b}{b} \Big )
     \frac{Y_0(i)}{Y_1(i)} <0. 
    \end{equation}
  The condition \eqref{iff} is equivalent to
  \begin{equation}
    \label{iff-2}
      b < B
    \end{equation}
  by \eqref{LHospital}.  We denote this zero in $(1,\sqrt{3})$
  of $Y_b(yi)$ by $q_b$ when $ b \in (0, B)$.
  It is also clear from \eqref{Yb-alt} and \eqref{quo-mono} that
  as $b$ increases from $0$ to $B$, $q_b$ decreases monotonically
  from $\sqrt{3}$ towards $1$. This proves parts 1(c), 1(d), and
  2(b) of the lemma. The remaining parts follow from
  Lemma \ref{l-imaginary}.
  \end{proof}

\section{$f_b$ on upper half plane}
\setcounter{equation}{0}

We start with a study of the singular point $z=1$. Recall the set
$W$ from \eqref{W}.

\begin{lemma}
  \label{l-singular}
   \[ \limsup_{W \ni z \rightarrow 1} X_b(z) =0. \]
\end{lemma}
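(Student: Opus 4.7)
My plan is to reduce the behavior of $X_b$ near $z = 1$ to that of $X_1$ at the cusp $i\infty$, via a shift, a halving, and the modular inversion $\zeta \mapsto -1/\zeta$. The first step is to combine the invariance $f_1(z) = f_1(z-1)$ from Lemma \ref{l-inv-group} with the identity $|\eta(\zeta+1)| = |\eta(\zeta)|$ from Lemma \ref{l-func-eq}. The latter, together with $\im((z+1)/2) = \im((z-1)/2)$, gives $f_0(z) = f_1((z-1)/2)$, and hence $X_0(z) = \tfrac{1}{2}\, X_1((z-1)/2)$. Writing $w = z - 1$, this produces the clean identity
\[
X_b(1 + w) \;=\; b\, X_1(w) \;+\; \tfrac{1-b}{2}\, X_1(w/2),
\]
reducing the question to $X_1$ on a small punctured neighborhood of the origin.

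Next I parametrize $w = \rho\, e^{i(\pi/2 + \phi)}$. The constraints $\re z < 1$ and $|z| > 1$ translate to $\cos(\pi/2+\phi) \in (-\rho/2, 0)$, i.e., $\phi \in (0, \arcsin(\rho/2)]$, a thin sector collapsing onto the positive imaginary axis as $\rho \to 0$. The central step is to apply $f_1(\zeta) = f_1(-1/\zeta)$ and differentiate: a direct chain-rule computation yields
\[
X_1(w) \;=\; -\,\frac{\cos 2\phi}{\rho^2}\, X_1(s) \;+\; \frac{\sin 2\phi}{\rho^2}\, Y_1(s),\qquad s := -1/w,
\]
with $\im s = \cos\phi/\rho \to \infty$ and $\re s = \sin\phi/\rho \in (0, 1/2]$. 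The Fourier series \eqref{X1}, \eqref{Y1} deliver the cusp asymptotics $X_1(s) = O(e^{-2\pi\, \im s})$ and $Y_1(s) = -\pi/3 + O(1/\im s)$, uniformly in $\re s \in [0, 1/2]$. Applying the same estimate to $w/2$ (same $\phi$, half modulus) and combining yields the leading-order expansion
\[
X_b(1+w) \;=\; -\,\frac{(2-b)\,\pi\sin 2\phi}{3\rho^2} \;+\; o(1)
\]
uniformly over the admissible $\phi$-range as $\rho \to 0$.

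Since $b \in [0,1]$ and $\sin 2\phi \geq 0$ in this sector, the leading term is nonpositive, yielding $\limsup_{W \ni z \to 1} X_b(z) \leq 0$. For the matching lower bound I will exhibit a sequence $z_n \in W$ with $z_n \to 1$ and $\phi_n/\rho_n^2 \to 0$ along which the leading term vanishes; for instance $z_n = 1 - n^{-4} + i n^{-1}$ satisfies $|z_n| > 1$ for large $n$ and gives $\rho_n \sim n^{-1}$, $\phi_n \sim n^{-3}$, hence $X_b(z_n) \to 0$. The only nontrivial ingredient is the uniformity of the cusp asymptotics of $X_1$ and $Y_1$ as $\im s \to \infty$; this is routine from the exponential convergence of the defining series, but one must check that the uniformity in $\re s$ is genuine, which is immediate once one notes that the dominant $n=1$ terms have bounded trigonometric factors. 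The remainder of the argument is just the polar bookkeeping sketched above.
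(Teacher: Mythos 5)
Your reduction to the cusp is a genuinely different route from the paper's, and its skeleton is sound: the identities $X_1(1+w)=X_1(w)$ and $X_0(1+w)=\tfrac12 X_1(w/2)$ are correct, as is the polar form of the inversion, $X_1(w) = -\rho^{-2}\cos 2\phi\, X_1(s) + \rho^{-2}\sin 2\phi\, Y_1(s)$ with $s=-1/w$, and the sector bound $0<\sin\phi<\rho/2$. The paper instead works directly with the series \eqref{X1}: for $z=x+yi \in W$ near $1$ one has $0<1-x<y^2$, so every term with $ny^2<1/2$ has $\sin 2\pi n x<0$, the tail $ny^2\geq 1/2$ is exponentially small, the same is asserted for $X_0$, and $X_b(1+yi)=0$ supplies the attainment. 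Your argument has the merit of treating $X_0$ with no separate estimate and of explaining geometrically why the answer is $0$ (the sector collapses onto the line $\re z=1$ where $X_b$ vanishes), at the cost of more bookkeeping.

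There is, however, a concrete error at exactly the step from which you draw the sign conclusion: the expansion $X_b(1+w) = -\frac{(2-b)\pi\sin 2\phi}{3\rho^2}+o(1)$ is false. The $\frac{1}{\im s}$ term in $Y_1(s)=\frac{1}{\im s}-\frac{\pi}{3}+O(e^{-2\pi\im s})$ does not disappear after multiplication by $\rho^{-2}\sin 2\phi$: it contributes $\frac{\sin 2\phi}{\rho^2}\cdot\frac{\rho}{\cos\phi}=\frac{2\sin\phi}{\rho}$, which is positive and, since $\sin\phi$ ranges up to $\rho/2$ over the admissible sector, can be as large as $1$; it is $O(1)$, not $o(1)$, uniformly in $\phi$. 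The $w/2$ piece contributes $\frac{4\sin\phi}{\rho}\cdot\frac{1-b}{2}$ of the same kind, so the total omitted positive term is $\frac{2\sin\phi}{\rho}$. The proof is rescued by an explicit comparison rather than an $o(1)$ claim: the non-exponential part of $X_b(1+w)$ equals $\frac{2\sin\phi}{\rho}\bigl(1-\frac{(2-b)\pi\cos\phi}{3\rho}\bigr)$, which is negative once $\rho<\frac{(2-b)\pi\cos\phi}{3}$, i.e.\ for all $z\in W$ sufficiently close to $1$; this gives $\limsup\leq 0$, and your sequence $z_n=1-n^{-4}+in^{-1}$ correctly supplies the attainment. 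With that repair the argument goes through, but as written the deduction ``the leading term is nonpositive, yielding $\limsup\le 0$'' rests on a remainder estimate that is not true.
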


\begin{proof}
Note that  if $z=x+yi \in W$, then, when $y < 1$,
  \begin{equation}
    \label{x-range}
    0< 1-x < 1 - \sqrt{1-y^2}.
  \end{equation}
  So $W \ni z \rightarrow 1$ is equivalent to that
  $z \in W$ and $y \rightarrow 0$.

We first show that
  \begin{equation}
   \label{X1-singular}
   \limsup_{W \ni z \rightarrow 1} X_1(z) \leq 0.
  \end{equation}
  Namely, 
  for every $\epsilon >0$ there exists $\delta >0$
  such that if $z=x+yi \in W$ and $y < \delta$, then $X_b(z) < \epsilon$.

Recall 
  \[
    X_1(z) = \sum_{n=1}^\infty \frac{8\pi n \sin 2\pi n x }
    { e^{2\pi ny} + e^{-2\pi ny} - 2 \cos 2\pi n x}
  \]
from \eqref{X1}. 
Separate this infinite sum  into two parts according to
  whether $n y^2 < \frac{1}{2}$ or  $ny^2 \geq \frac{1}{2}$. Write
  \begin{align}
  a_n(z) &= \frac{8\pi n \sin 2\pi n x }
    { e^{2\pi ny} + e^{-2\pi ny} - 2 \cos 2\pi n x} \\
 A(z) &= \sum_{n< 1/(2y^2)} a_n(z) \\
 \widetilde{A}(z) &= \sum_{n\geq 1/(2y^2)} a_n(z) 
\end{align}
so that
\begin{equation}
\label{AtA}
  X_1(z) = A(z) + \widetilde{A}(z). \\
\end{equation}

Consider the case $n < \frac{1}{2y^2}$.
  Since $1-\sqrt{1-y^2} < y^2$ when $y \in (0,1)$ , \eqref{x-range} implies  
  \begin{equation}
    \label{x-range-2}
    0< 1-x < y^2.
    \end{equation}
Then $0< 2 \pi n (1-x) < 2\pi n y^2 < \pi$  and
$\sin 2\pi n x = - \sin 2\pi n (1-x) < 0$. Hence, every term
$a_n(z)$ in $A(z)$ is negative, and consequently
\begin{equation}
\label{A}
 A(z) < 0. 
\end{equation}

When $n \geq \frac{1}{2y^2}$,
\begin{equation}
\label{estimate}
 |a_n(z) | \leq \frac{8\pi n}{e^{2\pi n y} -2}
 \leq \frac{1}{e^{\pi ny}}.
\end{equation}
To see the last inequality, note
\[  \frac{8\pi n}{e^{2\pi n y} -2} \leq 
   \frac{8\pi n}{e^{2\pi n y} -2} \big ( 2n y^2 \big )
    = \frac{16 \pi (ny)^2}{e^{2\pi ny}-1}.
\]
There exists $t_0>0$ such that for all $t > t_0$,
$\frac{16 \pi t^2}{e^{2\pi t}-1} \leq \frac{1}{e^{\pi t}}$. Since
$n \geq \frac{1}{2y^2}$, $n y \geq \frac{1}{2y}$. By choosing
$y < \frac{1}{2 t_0}$, we have $n y > t_0$ and the last inequality of
\eqref{estimate} follows.
Then
\begin{equation}
\label{estimate-2}
  | \widetilde{A} (z) |
  \leq \sum_{n\geq 1/(2y^2)} \frac{1}{e^{\pi n y}}
  \leq \frac{e^{-\pi y \left [\frac{1}{2y^2} \right]}}{1- e^{-\pi y}}
  \rightarrow 0 \ \ \mbox{as} \ y \rightarrow 0
\end{equation}
where $\left[\frac{1}{2y^2} \right]$ is the integer part of $\frac{1}{2y^2}$.
The claim \eqref{X1-singular} now follows from \eqref{A} and
\eqref{estimate-2}. 

Next we claim that
\begin{equation}
\label{X0-singular}
\limsup_{W \ni z \rightarrow 1}X_0(z) \leq 0.
\end{equation}
This is proved by a similar argument whose details are omitted. 
By \eqref{X1-singular} and \eqref{X0-singular} we obtain that
\begin{equation}
\label{Xb-singular}
\limsup_{W \ni z \rightarrow 1}X_b(z) \leq 0.
\end{equation}

From the series \eqref{X1} and \eqref{X0},
\begin{equation}
\label{Xb-x=1}
X_b(1+yi) =0. \ \ y>0
\end{equation}
This turns \eqref{Xb-singular} to
\begin{equation}
\label{Xb-singular-2}
\limsup_{W \ni z \rightarrow 1}X_b(z) = 0,
\end{equation}
proving the lemma. 
\end{proof} 

Recall that for $b \in [0, B)$, the largest of the three critical points 
  of the function $y \rightarrow f_b(yi)$, $y > 0$, is denoted $q_b$.
  This $q_b$ is a maximum and $1 < q_b \leq \sqrt{3}$. 
  By convention
  if $b \in [B, 1]$, we set $q_b=1$ which is the unique critical point
  (a maximum) of $y \rightarrow f_b(yi)$, $y > 0$.

If $b \in [B, 1]$, then $1-b \in [0, 1-B]$ and $q_{1-b}$ is defined as above.
The transform $z=x+y i \rightarrow w=u+vi$ in \eqref{key-transform} sends
  the point $z=q_{1-b}i$ to $w = \frac{q_{1-b}^2-1 + 2q_{1-b} i}{q_{1-b}^2+1}$.
  Define
\begin{equation}
  \label{pq}
   p_b =  \frac{q_{1-b}^2-1}{q_{1-b}^2+1}.
  \end{equation}
Then
\begin{equation}
  \label{pq-2}
  \frac{q_{1-b}^2-1 + 2q_{1-b} i}{q_{1-b}^2+1} = p_b + i \sqrt{1-p_b^2}.
\end{equation}

\begin{lemma}
  Let $b \in [0,1-B]$ and $W$ be given in \eqref{W}.
  Then $X_b(z) <0$ for all $z \in W$.
  \label{l-monotone}
\end{lemma}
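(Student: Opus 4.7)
The plan is to apply the maximum principle to $X_b$, which is harmonic on $\mathbb{H}$ and hence on $W$: since $\eta$ does not vanish on $\mathbb{H}$, both $\log|\eta(z)|$ and $\log|\eta((z+1)/2)|$ are harmonic there, and combining with the harmonic functions $\log\im z$ and $\log\im((z+1)/2)$ shows $f_b$ is harmonic; partial derivatives of harmonic functions are harmonic. I will verify $X_b \le 0$ on $\partial W \cap \mathbb{H}$ with strict inequality on the bottom arc, together with $\limsup X_b \le 0$ at $z = 1$ and $\lim X_b = 0$ at $\infty$, and then invoke the strong maximum principle.

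Boundary analysis splits into three arcs. On the left segment $\{iy : y \ge 1\}$, the reflection invariance $f_b(-\bar z) = f_b(z)$ of Lemma~\ref{l-invariance-reflection} makes $f_b$ even in $\re z$, so $X_b(iy) = 0$. On the right segment $\{1 + iy : y > 0\}$, every sine factor in the series \eqref{X1}, \eqref{X0} vanishes at $x = 1$, giving $X_b(1+iy) = 0$; this is \eqref{Xb-x=1}. On the open bottom arc $\{u + i\sqrt{1-u^2} : u \in (0,1)\}$, Lemma~\ref{l-circle-im} produces
\begin{equation*}
X_b\bigl(u+i\sqrt{1-u^2}\bigr) \;=\; \sqrt{\tfrac{1+u}{1-u}}\; Y_{1-b}\!\left(\sqrt{\tfrac{1+u}{1-u}}\,i\right).
\end{equation*}
Since $\sqrt{(1+u)/(1-u)} > 1$ for $u \in (0,1)$, and since $b \in [0,1-B]$ forces $1-b \in [B,1]$, part~2 of Lemma~\ref{l-fb-im} gives $Y_{1-b}(yi) < 0$ for all such $y$, so the right-hand side is strictly negative on the open arc.

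It remains to handle the corner $z = 1$ and the behavior at $\infty$. Lemma~\ref{l-singular} already supplies $\limsup_{W \ni z \to 1} X_b(z) \le 0$, while the series expansions \eqref{X1}, \eqref{X0} show $X_b(z) \to 0$ uniformly in $\re z$ as $\im z \to \infty$ through the decay factor $e^{-\pi n y}$. Applying the maximum principle on the bounded subdomain $W_{R,\varepsilon} := W \cap \{\im z < R\} \setminus \overline{B(1,\varepsilon)}$ and letting $R \to \infty$ and $\varepsilon \to 0^+$ gives $\sup_W X_b \le 0$; strict negativity on the open arc rules out $X_b \equiv 0$, so the strong maximum principle yields $X_b < 0$ throughout $W$. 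The pivotal structural input is Lemma~\ref{l-fb-im}.2, and the hypothesis $b \in [0,1-B]$ is precisely what forces $1-b \in [B,1]$ so that $Y_{1-b}$ has a definite sign on $(i,\infty i)$; for $b > 1-B$ this sign is lost on $(i, q_{1-b} i)$ and the present arc estimate breaks down.
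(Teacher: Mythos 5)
Your proposal is correct and follows essentially the same route as the paper: zero boundary values on the segments $\re z=0$ and $\re z=1$, strict negativity on the unit-circle arc via Lemma \ref{l-circle-im} combined with Lemma \ref{l-fb-im}.2, the corner estimate of Lemma \ref{l-singular}, decay at infinity, and the maximum principle. The only cosmetic differences are that you derive $X_b(iy)=0$ from the reflection invariance rather than directly from the series \eqref{X1}, \eqref{X0}, and that you spell out the exhaustion by bounded subdomains, which the paper leaves implicit.
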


\begin{proof}

  From \eqref{X1} and \eqref{X0} one deduces 
  \begin{equation}
    \label{X-line-x=0,1}
    X_b(yi) =0 \ \ \mbox{and} \ \ X_b(1+yi)=0, \ \ y>0.
    \end{equation}
  Also
    \begin{equation}
      \label{X-y=inf}
      \lim_{y \rightarrow \infty} X_b(z) = 0
      \ \ \mbox{uniformly with respect to} \ x \in\mathbb{R}. 
      \end{equation}
  
  On the unit circle, we know from
  \eqref{line-to-circle-X}
  \begin{equation}
    \label{line-to-circle-X-2}
    X_b(x + i\sqrt{1-x^2}) = \frac{\sqrt{1-x^2}}{1-x}
    Y_{1-b}\Big(\frac{\sqrt{1-x^2}}{1-x} \, i \Big), \ \ x \in (-1,1).
  \end{equation}
  When $b \in [0, 1-B]$, $ 1-b \in [B, 1]$. By Lemma \ref{l-fb-im}.2,
  \[  Y_{1-b}\Big(\frac{\sqrt{1-x^2}}{1-x} \, i \Big) <0, \ \ \mbox{if} \ \
  x \in (0,1). \]
    This shows 
    \begin{equation}
      \label{X-circle}
            X_b(x + i\sqrt{1-x^2}) <0, \ \ \mbox{if} \ \  x \in (0,1).
      \end{equation}
 The lemma follows from
\eqref{X-line-x=0,1}, \eqref{X-y=inf}, \eqref{X-circle},
and Lemma \ref{l-singular} by the maximum principle.
\end{proof}

We are now ready to prove the main theorem. 

\begin{proof}[Proof of Theorem \ref{t-main}]

  \

  \
  
\noindent Claim 1. 
  Let $b \in [0, 1-B]$. Then $f_b(z)$ on the upper half plane
  is maximized at $q_b i$ and the points in the orbit of $q_b i$
  under the group ${\cal G}$.

\

The second plot of Figure \ref{f-W} demonstrates our argument. 
By Lemma \ref{l-inv-group}.3 it suffices to consider $f_b$
in $\overline{W}_{\mathbb{H}}$. In $W$ 
Lemma \ref{l-monotone} asserts that $f_b$ is strictly decreasing in
the horizontal direction, so it can only attain a maximum
in $\overline{W}_{\mathbb{H}}$
on the part of the unit circle in the first quadrant, i.e.
$\{ w \in \mathbb{C}: \ |w|=1, \ 0<\re w<1, \ \im w >0\}$, or on
the part of the imaginary axis above $i$, i.e.
$\{ z \in \mathbb{C}: \ \re z =0, \ \im z \geq 1 \}$.

  First rule out the unit circle. 
  By Lemma \ref{l-dual}
  \begin{equation}
  f_b(w)=f_{1-b}(z), \ z \in \mathbb{H}, \ w=\frac{z-1}{z+1} \in \mathbb{H}. 
    \label{dual}
    \end{equation}
  Take $z=yi$ to be on the imaginary axis. Then
  \[ w= \frac{y^2-1}{y^2+1} + \frac{2y}{y^2+1} i \]
  is on the unit circle. As $z$ moves from $i$ to $\infty$ upward
  along the imaginary axis, $w$ moves from
  $i$ to $1$ clockwise along the unit circle. 
  When $b \in [0,1-B]$, $1-b \in [B,1]$.
  Since $y \rightarrow f_{1-b}(yi)$
  is strictly decreasing for $y \in (1, \infty)$
  by Lemma \ref{l-fb-im}.2, $f_b(w)$ is strictly decreasing when
  $w$ moves from $i$ to $1$ clockwise along the unit circle. Then
  $f_b$ cannot attain a maximum on
  $\{ z\in \mathbb{C}: \ |w|=1, \ 0<\re w<1, \ \im w>0\}$.

  Therefore in $\overline{W}_{\mathbb{H}}$, $f_b$ can only achieve a maximum
  on $\{ z \in \mathbb{C}: \ \re z =0, \ \im z \geq 1 \}$.
  By Lemma \ref{l-fb-im}.1, it does so at $q_b i$. This proves Claim 1.

  \ 

  By Lemma
  \ref{l-fb-im}.1 and the convention that $q_b=1$ if $b \in [B,1]$,
  three possibilities exist for $q_b$ when $b \in [0, 1-B]$.
  When $b=0$, $q_b=\sqrt{3}$, which proves part 1 of the theorem.
  When $b \in (0,B)$, $q_b \in (1, \sqrt{3})$, which proves part 2 of
  the theorem.
  When $b\in [B,1-B]$, $q_b =1$, which proves part 3 of the theorem. 

  \

  Now consider the case $B \in (1-B,1]$. 

\

\noindent Claim 2. 
  If $b \in (1-B, 1]$, then $f_b$ on the upper half plane
  is maximized at $p_b +i \sqrt{1-p_b^2}$ and the points in its orbit
  under the group ${\cal G}$.

\

 By Lemma \ref{l-dual}, the duality property, we have
\[ f_b(w) = f_{1-b} (z), \ \  z \in \mathbb{H} \ \ \mbox{and} \ \ 
w=\frac{z-1}{z+1} \in \mathbb{H} \]
If $w_\ast$ maximizes $f_b$, then $z_\ast = \frac{w_\ast+1}{-w_\ast+1}$
maximizes $f_{1-b}$. Since $b \in (1-B, 1]$, $1-b \in [0, B)$. By
Claim 1, $z_\ast = q_{1-b}i$ or a point in the orbit
of $q_{1-b}i$ under ${\cal G}$.
Under the transform
$w=\frac{z-1}{z+1}$, $z_\ast=q_{1-b}i$ corresponds to
\begin{equation} w_\ast=\frac{q_{1-b}i-1}{q_{1-b}i+1}
  =  \frac{q_{1-b}^2-1 + 2q_{1-b} i}{q_{1-b}^2+1}
  = p_b + i \sqrt{1-p_b^2}
  \label{wast}
\end{equation}
by \eqref{pq-2}. This proves Claim 2.

\

When $b\in (1-B,1)$, $q_{1-b} \in (1, \sqrt{3})$ by
Lemma \ref{l-fb-im}.1. Then by \eqref{pq-2}, $p_b+i \sqrt{1-p_b^2}$
identified as a maximum of $f_b$ in Claim 2 is in 
$\{ z \in \mathbb{C}: \ |z|=1,
\ \frac{\pi}{3} < \arg z < \frac{\pi}{2} \}$. This proves
part 4 of the theorem. Finally when $b=1$, $q_0=\sqrt{3}$ and
\begin{equation}
  \label{b=1}
  p_1 + i \sqrt{1-p_1^2}=\frac{1+\sqrt{3} i}{2}
\end{equation}
by \eqref{pq-2}. This proves part 5 of the theorem. 
\end{proof}

\appendix
\renewcommand{\theequation}{A.\arabic{equation}}
\setcounter{equation}{0}

\vskip 1cm

\noindent{\Large \bf Appendix}

\

\begin{proof}[Proof of \eqref{quo-mono}]

Here we prove the monotonicity of the function
$y \rightarrow \frac{Y_0(yi)}{Y_1(y i)}$, $y \in (1, \sqrt{3})$, by
showing
\begin{equation}
  \label{mono}
 \frac{\partial }{\partial y} \Big ( \frac{Y_0(yi)}{Y_1(y i)} \Big )
  >0, \ y \in (1, \sqrt{3}).  
\end{equation}
Our proof uses the following L'Hospital like criterion for
monotonicity. See \cite{anderson-vamanamurthy-vuorinen} for more
information on this trick.

\

\noindent Claim.
\begin{equation}
  \frac{\partial }{\partial y} \Big ( \frac{Y_0(yi)}{Y_1(y i)} \Big ) >0 
  \ \mbox{on}  
  \  (1, \sqrt{3}) \ \mbox{if} \ y \rightarrow
  \frac{\frac{\partial Y_0(yi)}{\partial y}}{
    \frac{\partial Y_1(y i)}{\partial y}}
  \ \mbox{is strictly increasing on} \ (1,\sqrt{3}).
  \label{gromov}
  \end{equation}

\

Let $y \in (1, \sqrt{3})$. There exist $y_1 \in (1, y)$ and $y_2\in (1,y)$
such that
\begin{align*}
  \frac{\partial }{\partial y} \Big ( \frac{Y_0(yi)}{Y_1(y i)} \Big )
  & = \frac{\frac{\partial Y_0(yi)}{\partial y}
    Y_1(yi) - Y_0(y i) \frac{\partial Y_1(yi)}{\partial y} }{Y_1^2(yi)} \\
  & = \frac{\frac{\partial Y_1(yi)}{\partial y} }{Y_1(y i)}
  \Big (  \frac{ \frac{\partial Y_0(yi)}{\partial y} }
       { \frac{\partial Y_1(yi)}{\partial y} }
       - \frac{Y_0(yi)}{Y_1(yi)}\Big ) \\
       & =  \frac{\frac{\partial Y_1(yi)}{\partial y} }{Y_1(y i)-Y_1(i)}
       \Big (  \frac{ \frac{\partial Y_0(yi)}{\partial y} }
            { \frac{\partial Y_1(yi)}{\partial y} }
            - \frac{Y_0(yi)-Y_0(i)}{Y_1(yi)-Y_1(i)}\Big ) \\
            & = \frac{\frac{\partial Y_1(yi)}{\partial y} }
            {\frac{\partial Y_1(y_1i)}{\partial y} (y-1) }
       \Big (  \frac{ \frac{\partial Y_0(yi)}{\partial y} }
            { \frac{\partial Y_1(yi)}{\partial y} }
            - \frac{ \frac{\partial Y_0(y_2i)}{\partial y} }
            { \frac{\partial Y_1(y_2i)}{\partial y} }\Big )
\end{align*}
since $Y_1(i)=Y_0(i)=0$. 
Because $\frac{\partial Y_1(yi)}{\partial y}$ does not change sign in
$(1, \sqrt{3})$,
\begin{equation}
  \label{term1} \frac{\frac{\partial Y_1(yi)}{\partial y} }
        {\frac{\partial Y_1(y_1i)}{\partial y} (y-1) } >0.
        \end{equation}
Moreover, since
\[  y \rightarrow
  \frac{\frac{\partial Y_0(yi)}{\partial y}}{
    \frac{\partial Y_1(y i)}{\partial y}} \]
  is strictly increasing,
  \begin{equation}
    \label{term2}
     \frac{ \frac{\partial Y_0(yi)}{\partial y} }
            { \frac{\partial Y_1(yi)}{\partial y} }
            - \frac{ \frac{\partial Y_0(y_2i)}{\partial y} }
            { \frac{\partial Y_1(y_2i)}{\partial y} } >0.
    \end{equation}
The claim then follows from \eqref{term1} and \eqref{term2}.

\

We proceed to show that
\begin{equation}
  \frac{\partial }{\partial y} \Big (
  \frac{\frac{\partial Y_0(yi)}{\partial y}}{
    \frac{\partial Y_1(y i)}{\partial y}} \Big )
  = \frac{\frac{\partial^2 Y_0(yi)}{\partial y^2}
    \frac{\partial Y_1(yi)}{\partial y} -
    \frac{\partial Y_0(yi)}{\partial y}
    \frac{\partial^2 Y_1(yi)}{\partial y^2}
  }
  { \Big ( \frac{\partial Y_1(y i)}{\partial y} \Big )^2}
  >0, \ y \in (1, \sqrt{3}).  \label{gromov2}
\end{equation}
Define
\begin{equation} \label{T}
   T(y)= \frac{\partial^2 Y_0(yi)}{\partial y^2}
    \frac{\partial Y_1(yi)}{\partial y} -
    \frac{\partial Y_0(yi)}{\partial y}
    \frac{\partial^2 Y_1(yi)}{\partial y^2}.
  \end{equation}
By \eqref{Y1'-im}, $\frac{\partial Y_1(yi)}{\partial y} < 0$ on $(0, \infty)$.
Therefore to prove \eqref{gromov2}, it suffices to show
\begin{equation}
  T(y) > 0, \ y \in (1, \sqrt{3}). 
  \label{T>0}
 \end{equation}

We divide $(1, \sqrt{3})$ into two intervals: $(1, \beta)$ and
$[\beta, \sqrt{3})$  where $\beta \in (1,\sqrt{3})$ is to be determined.
  First consider $T(y)$ on $(1,\beta)$.
  Lemma \ref{l-imaginary} asserts that for $j=0,1$,
  \[ Y_j(yi) = \Big (- \frac{1}{y^2} \Big ) Y_j\Big (\frac{i}{y} \Big ). \]
  Differentiation shows that
  \begin{align}
    \frac{\partial Y_j(yi)}{\partial y} &= 2 y^{-3}
    Y_j\Big (\frac{i}{y} \Big )+ y^{-4}
    \frac{\partial Y_j \big (\frac{i}{y} \big )}{\partial y} \\
    \frac{\partial^2 Y_j(yi)}{\partial y^2} &=-6 y^{-4}
    Y_j\Big (\frac{i}{y} \Big ) - 6 y^{-5} 
    \frac{\partial Y_j \big (\frac{i}{y} \big )}{\partial y}
    - y^{-6} \frac{\partial^2 Y_j \big (\frac{i}{y} \big )}{\partial y^2}.
    \label{Y''-ref}
    \end{align}
  Taking $y=1$ in \eqref{Y''-ref} and using $Y_j(i)=0$, one obtains
  \begin{equation}
    \label{recursive}
    \frac{\partial^2 Y_j(i)}{\partial y^2}
    = -3 \frac{\partial Y_j(i)}{\partial y}, \ j=1,0.
  \end{equation}
  In particular \eqref{recursive} implies that
  \begin{equation}
    \label{Tat1}
    T(1)=0.
    \end{equation}

  Next consider the derivative of $T$,
  \begin{equation} \label{T'}
     T'(y) = \frac{\partial^3 Y_0(yi)}{\partial y^3}
    \frac{\partial Y_1(yi)}{\partial y} -
    \frac{\partial Y_0(yi)}{\partial y}
    \frac{\partial^3 Y_1(yi)}{\partial y^3}.
    \end{equation}
  It is clear from \eqref{Y1'-im} and \eqref{Y1'''-im} that
  \begin{equation}
    \frac{\partial Y_1(yi)}{\partial y} <0, \ \
    \frac{\partial^3 Y_1(yi)}{\partial y^3} <0, \ \ y >0. \label{T12}
    \end{equation}

  Similar to the argument following \eqref{alt-2}, one finds
  the series in \eqref{Y0'-im} to be alternating when $y>1$. Therefore,
  \begin{equation}
    \frac{\partial Y_0(yi)}{\partial y}
    > -\frac{1}{y^2} + \frac{4\pi^2 e^{-\pi y}}{(1+e^{-\pi y})^2}
    - \frac{16\pi^2 e^{-2 \pi y}}{(1-e^{-2\pi y})^2}
    \label{Y0'-im-2}
    \end{equation}
  We will later choose $\beta\in (1,\sqrt{3})$
  so that when $y=\beta$, the right side
  of \eqref{Y0'-im-2} is positive; namely choose $\beta$ to make
  \begin{equation}
    -\frac{1}{\beta^2} + \frac{4\pi^2 e^{-\pi \beta}}{(1+e^{-\pi \beta})^2}
    - \frac{16\pi^2 e^{-2 \pi \beta}}{(1-e^{-2\pi \beta})^2} > 0.
    \label{eta-1}
  \end{equation}
  Since
  \begin{equation}
    \label{Y0''-1-sqrt3}
    \frac{\partial^2 Y_0(yi)}{\partial y^2} <0, \ \ 
    y \in (1, \sqrt{3})
  \end{equation}
  by \eqref{Y1''-1/2-3}, the condition
  \eqref{eta-1} implies that
  \begin{equation}
     \frac{\partial Y_0(yi)}{\partial y} >0, \ \ y \in (1,\beta).
    \label{T3}
    \end{equation}

  Regarding $\frac{\partial^3 Y_0(yi)}{\partial y^3}$, write it as
  \begin{equation}
     \frac{\partial^3 Y_0(yi)}{\partial y^3} = -\frac{6}{y^4}
     -4\pi^4 \sum_{n=1}^\infty \frac{n^4 (-r)^n}{(1-(-r)^n)^4}
       - 16\pi^4 \sum_{n=1}^\infty \frac{n^4 r^{2n}}{(1-(-r)^n)^4}
       - 4\pi^4 \sum_{n=1}^\infty \frac{n^4 (-r)^{3n}}{(1-(-r)^n)^4}
      \label{Y0'''-series}
  \end{equation}
  where
  \begin{equation}
    \label{r-2}  r=e^{-\pi y}.
    \end{equation}
  as before. Clearly, the second series in \eqref{Y0'''-series}
  is positive for all
  $y>0$. One can also show as before that when $y>1$, the first and the third
  series in \eqref{Y0'''-series} are both alternating. Pick three leading
  terms from the first series and one term from each of the second and the
  third series  to form an upper bound:
  \begin{align}
    \frac{1}{4\pi^4}
    \frac{\partial^3 Y_0(yi)}{\partial y^3} & <  -\frac{6}{4\pi^4 y^4}
    + \frac{r}{(1+r)^4} - \frac{16 r^2}{(1-r^2)^4}
    + \frac{81r^3}{(1+r^3)^4} 
     - \frac{4 r^2}{(1+r)^4} 
     + \frac{r^3}{(1+r)^4} \nonumber \\
     & < -\frac{6}{4\pi^4 y^4}
     + \frac{r}{(1+r)^4} - \frac{16 r^2}{(1-r^2)^4} + 82 r^3
     - \frac{4r^2}{(1+r)^4} \nonumber \\
     & = -\frac{6}{4\pi^4 y^4} +
     \frac{r}{(1-r^2)^4} \Big ( 1- 24 r + 104 r^2 -28 r^3
     - 311 r^4 -4 r^5 + 497 r^6 -328 r^8 + 82r^{10}  \Big ) \nonumber \\
     & < -\frac{6}{4\pi^4 y^4} +
     \frac{r}{(1-r^2)^4} \big ( 1- 24 r + 104 r^2 \big ) \nonumber \\
     & \leq  -\frac{6}{4\pi^4 y^4} +
      \frac{1}{(1-e^{-2\pi})^4} \  r \big ( 1- 24 r + 104 r^2 \big ).
  \end{align}
  Denote the last line by
  \begin{equation}
    \label{sigma}
    \sigma (y) =  -\frac{6}{4\pi^4 y^4} +
    \frac{1}{(1-e^{-2\pi})^4} \Big ( e^{-\pi y}- 24 e^{-2\pi y} +
    104 e^{-3\pi y} \Big )
  \end{equation}
  Compute 
  \begin{align}
    \label{sigma'}
    \sigma'(y) &= \frac{24}{4 \pi^4 y^5}
    +  \frac{\pi e^{-\pi y}}{(1-e^{-2\pi})^4} \Big (-1 + 48 e^{-\pi y}
    - 312  e^{-2\pi y} \Big )
    \end{align}
  and consider the quantity in the parentheses,
  \begin{equation}
     \phi(y) = -1 + 48 e^{-\pi y} - 312  e^{-2\pi y}.
  \end{equation}
  Since $\phi'(y) =\pi e^{-\pi y} (- 48 + 624 e^{-\pi y}) < 0$ if
  $y>1$, 
  \begin{equation}
    \phi(y) > \phi(\beta), \ \ \mbox{if} \ y \in (1,\beta). 
  \end{equation}
  If one can make $\phi(\beta)>0$, namely if
  \begin{equation}
    -1 + 48 e^{-\pi \beta} - 312  e^{-2\pi \beta} >0,
    \label{eta-2}
    \end{equation}
  then
  \begin{equation}
    \phi(y) >0, \ \ y\in (1,\beta).
    \end{equation}
  Consequently, by \eqref{sigma'}
  \begin{equation}
    \label{sigma'-2}
     \sigma'(y) >0, \ \ y \in (1,\beta)
  \end{equation}\
  and
  \begin{equation}
   \sigma(y) < \sigma(\beta), \ \ y \in (1,\beta). 
  \end{equation}
  This shows that
  \begin{equation}
    \frac{\partial^3 Y_0(y i)}{\partial y^3} \leq 4\pi^4 \sigma (\beta),
     \ \ y \in (1,\beta).
    \end{equation}
  If one can choose $\beta$ so that $\sigma(\beta)<0$, namely
  \begin{equation}
    \label{eta-3}
      -\frac{6}{4\pi^4 \beta^4} +
    \frac{1}{(1-e^{-2\pi})^4} \Big ( e^{-\pi \beta}- 24 e^{-2\pi \beta} +
    104 e^{-3\pi \beta} \Big ) <0, 
  \end{equation}
  then
  \begin{equation}
    \label{T4}
    \frac{\partial^3 Y_0(yi)}{\partial y^3} < 0, \ \ y\in (1,\beta).
    \end{equation}

  Following \eqref{T12}, \eqref{T3}, and \eqref{T4}, one has that
  \begin{equation}
    T'(y) > 0, \ \ y \in (1,\beta).
    \label{T'-2}
  \end{equation}
  By \eqref{Tat1}, \eqref{T'-2} implies that
  \begin{equation}
    \label{small}
   T(y) >0, \ \ y \in (1, \beta) 
  \end{equation}
  provided \eqref{eta-1}, \eqref{eta-2}, and \eqref{eta-3} hold.

Next consider $T(y)$ for $y\in [\beta, \infty)$. Introduce 
\begin{align}
  d(y) &= Y_0(yi) - Y_1(yi) =
  \frac{\pi}{6} - \sum_{k=1}^\infty \frac{4\pi(2k-1)r^{2k-1}}{1+r^{2k-1}}
  \label{d} \\
  d'(y) &= \sum_{k=1}^\infty \frac{4\pi^2(2k-1)^2r^{2k-1}}{(1+r^{2k-1})^2}
  \label{d'} \\
  d''(y) &= \sum_{k=1}^\infty \frac{4\pi^3(2k-1)^3(-r^{2k-1}+r^{2(2k-1)})}
    {(1+r^{2k-1})^3}. \label{d''}
  \end{align}
Then by \eqref{Y1'-im}, \eqref{Y1''-im}, \eqref{d'}, and \eqref{d''}, 
\begin{align}
  T(y) &= d''(y) \frac{\partial Y_1(yi)}{\partial y}
  - d'(y) \frac{\partial^2 Y_1(yi)}{\partial y^2} \nonumber \\
  & =  \sum_{k=1}^\infty
  \frac{4\pi^2 (2k-1)^2 r^{2k-1}}{y^2(1+r^{2k-1})^2}
  \Big ( \frac{\pi (2k-1) (1-r^{2k-1})}{1+r^{2k-1}} - \frac{2}{y} 
  \Big ) \nonumber \\
  & \qquad + \sum_{k=1}^\infty \sum_{n=1}^\infty
  \frac{16 \pi^5 (2k-1)^2 (2n)^2 r^{2n+2k-1}}{(1+r^{2k-1})^2 (1-r^{2n})^2}
  \Big (  \frac{(2k-1)(1-r^{2k-1})}{1+r^{2k-1}}
   - \frac{2n (1+r^{2n})}{1-r^{2n}}
   \Big ) \label{T-2} \\
   &= \sum_{k=1}^\infty c_k + \sum_{k=1}^\infty \sum_{n=1}^\infty d_{kn}
   \label{T-c-d}
  \end{align} 
where $c_k$ and $d_{kn}$ are defined by the terms in \eqref{T-2}.

Regarding $c_k$, because, with $y>1$,
\[  \frac{\pi (2k-1) (1-r^{2k-1})}{1+r^{2k-1}} - \frac{2}{y}
\geq \frac{\pi(1-r)}{1+r} -2
> \frac{\pi(1-e^{-\pi})}{1+e^{-\pi}} -2 >0, \]
one has
\begin{equation}
  \label{c-2}
  c_k >0 \ \ \mbox{for all} \ k.
  \end{equation}
The terms $d_{kn}$ has the following property
\begin{equation}
  d_{kn} > 0 \ \ \mbox{if} \ k >n, \ \ \
  d_{kn} <0 \ \ \mbox{if} \ k \leq n.
  \label{d-2}
\end{equation}
To see \eqref{d-2}, define
\begin{equation}
    \rho_j(r) = \frac{j (1+(-r)^j)}{1-(-r)^j}
\end{equation}
so that the quantity in the second parenthesis pair of \eqref{T-2} is
$\rho_{2k-1}(r) - \rho_{2n} (r)$. The claim \eqref{d-2} follows if
$\rho_j(r)$ is increasing with respect to $j$. To this end consider
\begin{equation}
  \label{rho}
  \rho_{j+1}(r) - \rho_j(r)
  = \frac{1-(2j+1) (r+1) (-r)^j + r^{2j+1}}{(1-(-r)^{j+1}) (1-(-r)^j)}.
\end{equation}
This is clearly positive when $j$ is odd since $0<r<1$. When $j$ is even,
denote the numerator in \eqref{rho} by 
\[  \mu_j(r) = 1- (2j+1) (r+1) r^j + r^{2j+1}. \]
Then
\[ \mu_j'(r) = (2j+1) r^{j-1} \big (- j -(j+1) r + r^{j+1} \big ). \]
As $0<r<1$, 
\[ - j -(j+1) r + r^{j+1} < - j -(j+1) r + r = -j - j r <0. \]
Hence $\mu_j'(r) <0$ for $r \in (0,1)$. Moreover
\[ \mu_j(r) > 1-2(2j+1)r^j, \]
so
\[ \mu_j(e^{-\pi}) > 1- 2(2j+1) e^{-\pi j} > 0\]
for all $j \geq 1$. Therefore $\mu_j(r) >0$ since $r \in (0, e^{-\pi})$
and \eqref{d-2} is proved. 

By \eqref{d-2} drop the positive $d_{kn}$'s to bound the double sum
in \eqref{d} from below by
\begin{equation}
  \label{d-3}
  \sum_{l=1}^\infty \sum_{n=1}^\infty d_{kn}
   > \sum_{k=1}^\infty d_{kk} + \sum_{k=1}^\infty \sum_{n=k+1}^\infty d_{kn}.
  \end{equation}
First consider $\sum_{k=1}^\infty d_{kk}$:
\begin{equation}
  \label{dkk}
   d_{kk} = \frac{16 \pi^5 (2k-1)^2 (2k)^2 r^{4k-1}}{(1+r^{2k-1})^2 (1-r^{2k})^2}
  \Big (  \frac{(2k-1)(1-r^{2k-1})}{1+r^{2k-1}}
   - \frac{2k (1+r^{2k})}{1-r^{2k}}
   \Big ).
  \end{equation}
For $y>1$, 
\begin{align*}
  (1+r^{2k-1}) (1-r^{2k}) &=
1 + r^{2k-1}(1-r-r^{2k}) \\ & \geq 1 + r^{2k-1}(1-r-r^2) \\
& > 1 + r^{2k-1}(1-e^{-\pi}-e^{-2\pi}) \\
& = 1 + r^{2k-1} \times 0.9549... \ > \ 1.
\end{align*}
Also, when $y>1$, both $ \frac{(4k-2)r^{2k-1}}{1+r^{2k-1}}$ and
$\frac{4k r^{2k}}{1-r^{2k}}$ are decreasing with respect to $k$. Hence
\begin{align*}
  \frac{(2k-1)(1-r^{2k-1})}{1+r^{2k-1}}
  - \frac{2k (1+r^{2k})}{1-r^{2k}}
  & = -1 - \frac{(4k-2)r^{2k-1}}{1+r^{2k-1}} - \frac{4k r^{2k}}{1-r^{2k}} \\
  & \geq  -1 - \frac{2 r}{1+r} - \frac{4 r^2}{1-r^2} \\
  & >   -1 - \frac{2 e^{-\pi}}{1+e^{-\pi}} - \frac{4 e^{-2\pi}}{1-e^{-2\pi}} \\
  &= - \Big ( \frac{1+e^{-\pi}}{1-e^{-\pi}} \Big ).
\end{align*}
One estimates
\begin{align}
  \sum_{k=1}^\infty d_{kk} & > - 16\pi^5 \Big ( \frac{1+e^{-\pi}}{1-e^{-\pi}} \Big )
  \sum_{k=1}^\infty (2k-1)^2 (2k)^2 r^{4k-1} \nonumber \\
  & = - 64 \pi^5 \Big ( \frac{1+e^{-\pi}}{1-e^{-\pi}} \Big )
  \frac{r^3 (1+31r^4+55r^8+9r^{12})}{(1-r^4)^5} \label{dkk-2}
  \end{align}
with the help of the summation formula
\begin{equation}
  \label{sum-2}
  \sum_{k=1}^\infty (2k)^2 (2k-1)^2 r^{4k-1}
  = \frac{r^2}{16} \Big ( r \Big ( r^{-1} \Big ( r \Big ( \frac{1}{1-r^4}
  \Big )_r \Big )_r \Big )_r \Big )_r
   = \frac{4 r^3 (1+31r^4+55r^8+9r^{12})}{(1-r^4)^5}.
  \end{equation}

Next consider the double sum on the right of \eqref{d-3}. Dropping
the first term in the second parenthesis pair of \eqref{T-2} one obtains
\begin{equation}
  \label{d-4}
  \sum_{k=1}^\infty \sum_{n=k+1}^\infty d_{kn} >
  \sum_{k=1}^\infty \sum_{n=k+1}^\infty
  \frac{16 \pi^5 (2k-1)^2 (2n)^2 r^{2n+2k-1}}{(1+r^{2k-1})^2 (1-r^{2n})^2}
  \Big (- \frac{2n (1+r^{2n})}{1-r^{2n}} \Big ).
  \end{equation}
For $n \geq k+1$,
\begin{align*}
  (1+r^{2k-1}) (1-r^{2n}) &= 1 + r^{2k-1} (1-r^{2n-2k+1}-r^{2n}) \\
  & > 1+ r^{2k+2} \ \geq \ 1+ r^{2n}. 
\end{align*}
Consequently
\begin{align*}
  \frac{1+r^{2n}}{(1+r^{2k-1})^2 (1-r^{2n})^3} & <
  \frac{1}{(1+r^{2k-1}) (1-r^{2n})^2} \\
  & = \frac{1}{1+r^{2k-1} \big ( (1-r^{2n})^2 +r^{2n-2k+1}(-2+r^{2n})   \big )} \\
  & \leq \frac{1}{1+r^{2k-1} \big ( (1-r^2)^2 - 2r^3   \big )} \ < \ 1.
  \end{align*}
Return to \eqref{d-4} to deduce
\begin{align}
  \sum_{k=1}^\infty \sum_{n=k+1}^\infty d_{kn} & >
  - \sum_{k=1}^\infty \sum_{n=k+1}^\infty 16 \pi^5 (2k-1)^2 (2n)^3 r^{2n+2k-1}
   \nonumber \\
 & \geq  - \sum_{k=1}^\infty \sum_{n=2}^\infty 128 \pi^5 (2k-1)^2 n^3 r^{2n+2k-1}
    \nonumber \\
   & = - \sum_{k=1}^\infty 128 \pi^5 (2k-1)^2 r^{2k}
   \Big (\frac{r^3(8-5r^2+4r^4-r^6)} {(1-r^2)^4} \Big ) \nonumber \\
   & \geq  - \sum_{k=1}^\infty 128 \pi^5 (2k-1)^2 r^{2k+3}
   \Big ( \frac{8}{(1-e^{-2\pi})^4 }   \Big ) \nonumber \\
   & = - \frac{1024 \pi^5}{(1-e^{-2\pi})^4 }
    \Big ( \frac{r^5(1+6r^2+r^4)}{(1-r^2)^3} \Big ). \label{d-6}
  \end{align}
We have used the summation formulas
\begin{align}
  \sum_{n=2}^\infty n^3 r^{2n-1}
  &= \frac{1}{8} \Big ( r \Big (  r \Big( \frac{1}{1-r^2}
      \Big )_r \Big )_r \Big )_r - r
   = \frac{r^3(8-5r^2+4r^4-r^6)} {(1-r^2)^4} \label{sum-3} \\
     \sum_{k=1}^\infty  (2k-1)^2 r^{2k+3} &=
        r^5 \Big ( r \Big ( \frac{r}{1-r^2} \Big )_r \Big )_r
   =   \frac{r^5(1+6r^2+r^4)}{(1-r^2)^3}  \label{sum-4}
\end{align}
to reach the third line and the last line respectively.

By \eqref{T-c-d}, \eqref{c-2}, \eqref{d-3} \eqref{dkk-2} and \eqref{d-6},
taking two terms from $\sum_{k=1}^\infty c_k$ and using $1<y<\sqrt{3}$, we find
\begin{align}
  T(y) & \geq \frac{4\pi^2 r}{y^2(1+r)^2}
  \Big ( \frac{\pi (1-r)}{1+r} - \frac{2}{y}  \Big )
  + \frac{36\pi^2 r^3}{y^2(1+r^3)^2}
  \Big ( \frac{3 \pi (1-r^3)}{1+r^3} - \frac{2}{y}  \Big ) \nonumber
  \\ & \qquad - 64 \pi^5 \Big ( \frac{1+e^{-\pi}}{1-e^{-\pi}} \Big )
  \frac{r^3 (1+31r^4+55r^8+9r^{12})}{(1-r^4)^5}
  - \frac{1024 \pi^5}{(1-e^{-2\pi})^4 }
  \Big ( \frac{r^5(1+6r^2+r^4)}{(1-r^2)^3} \Big ) \nonumber 
  \\ & > \frac{4\pi^2 r}{y^2(1+r)^2}
  \Big ( \frac{\pi (1-r)}{1+r} - \frac{2}{y}  \Big )
  + \frac{36\pi^2 r^3}{3(1+e^{-3\pi})^2}
  \Big ( \frac{3 \pi (1-e^{-3\pi})}{1+e^{-3\pi}} - 2  \Big ) \nonumber
  \\ & \qquad - 64 \pi^5 \Big ( \frac{1+e^{-\pi}}{1-e^{-\pi}} \Big )
  \frac{r^3 (1+31e^{-4\pi}+55e^{-8\pi} +9e^{-12\pi})}{(1-e^{-4\pi})^5}
  \nonumber \\ & \qquad - \frac{1024 \pi^5}{(1-e^{-2\pi})^4 } 
  \Big ( \frac{r^3 e^{-2\pi}(1+6e^{-2\pi}+e^{-4\pi})}{(1-e^{-2\pi})^3} \Big )
  \nonumber \\  & = \frac{4\pi^2 r}{y^2(1+r)^2}
  \Big ( \frac{\pi (1-r)}{1+r} - \frac{2}{y}  \Big ) + A r^3 \label{T-3}
\end{align}
where
\begin{align}
  A &= \frac{36\pi^2}{3(1+e^{-3\pi})^2}
  \Big ( \frac{3 \pi (1-e^{-3\pi})}{1+e^{-3\pi}} - 2  \Big ) \nonumber
  \\ & \qquad - 64 \pi^5 \Big ( \frac{1+e^{-\pi}}{1-e^{-\pi}} \Big )
  \frac{(1+31e^{-4\pi}+55e^{-8\pi} +9e^{-12\pi})}{(1-e^{-4\pi})^5} \nonumber
  \\ & \qquad - \frac{1024 \pi^5}{(1-e^{-2\pi})^4 } 
  \Big ( \frac{ e^{-2\pi}(1+6e^{-2\pi}+e^{-4\pi})}{(1-e^{-2\pi})^3} \Big )
  \nonumber
  \\ & = -21,077.61... \label{constant-A}
\end{align}
Continuing from \eqref{T-3}, one has
\begin{align}
  T(y) & > \frac{\pi^2 r^2}{(1+r)^3}
  \Big ( \Big ( \frac{4\pi}{y^2} - \frac{8}{y^3}  \Big ) r^{-1}
  - \Big (\frac{4\pi}{y^2} + \frac{8}{y^3} \Big )
  + \frac{A r (1+r)^3}{\pi^2}   \Big ).
\end{align}
Bound the last term by
\begin{equation}
  \frac{A r (1+r)^3}{\pi^2} \geq \frac{A e^{-\pi \beta}
    (1+e^{-\pi \beta})^3}{\pi^2}
  \label{boundby}
\end{equation}
and define
\begin{equation}
  \label{nu}
   \nu(y) =  \Big ( \frac{4\pi}{y^2} - \frac{8}{y^3}  \Big ) e^{\pi y}
   - \Big (\frac{4\pi}{y^2} + \frac{8}{y^3} \Big )
   + \frac{A e^{-\pi \beta} (1+e^{-\pi \beta})^3}{\pi^2}
  \end{equation}
so that
\begin{equation}
  \label{T-4}
  T(y) > \frac{\pi^2 r^2}{(1+r)^3} \nu(y).
\end{equation}
Regarding $\nu(y)$, one finds
\begin{equation}
  \label{nu'}
  \nu'(y) = e^{\pi y} y^{-3} \Big ( 4\pi^2 \Big (
  \sqrt{y} - \frac{2}{2\sqrt{y}}\Big )^2 + \frac{8}{y} \Big )
   + \frac{8\pi}{y} + \frac{24}{y^4} > 0.
  \end{equation}
Then \eqref{T-4} implies
\begin{equation}
  \label{T-5}
  T(y) >  \frac{\pi^2 r^2}{(1+r)^3} \nu(\beta)
\end{equation}
where
\begin{equation}
  \label{nu-eta}
  \nu(\beta) =  \Big ( \frac{4\pi}{\beta^2} - \frac{8}{\beta^3}  \Big ) e^{\pi \beta}
   - \Big (\frac{4\pi}{\beta^2} + \frac{8}{\beta^3} \Big )
   + \frac{A e^{-\pi \beta} (1+e^{-\pi \beta})^3}{\pi^2}.
  \end{equation}
Therefore if we can find $\beta$ so that $\nu(\beta)>0$, namely
\begin{equation}
  \label{eta-4}
     \Big ( \frac{4\pi}{\beta^2} - \frac{8}{\beta^3}  \Big ) e^{\pi \beta}
   - \Big (\frac{4\pi}{\beta^2} + \frac{8}{\beta^3} \Big )
   + \frac{A e^{-\pi \beta} (1+e^{-\pi \beta})^3}{\pi^2} >0. 
\end{equation}
then
\begin{equation}
  \label{large}
  T(y) >0, \ \ y \in [\beta, \sqrt{3}]
  \end{equation}

In summary, to invoke \eqref{small} and \eqref{large} one must choose
$\beta$ so that
\eqref{eta-1}, \eqref{eta-2}, \eqref{eta-3}, \eqref{eta-4} all hold.
Our choice is
\begin{equation}
  \beta = 1.08
\end{equation}
at last. One readily checks the four conditions.
\begin{align}
  & -\frac{1}{\beta^2} + \frac{4\pi^2 e^{-\pi \beta}}{(1+e^{-\pi \beta})^2}
    - \frac{16\pi^2 e^{-2 \pi \beta}}{(1-e^{-2\pi \beta})^2}  
    \ \Big | _{\beta =1.08} = 0.2058... >0; \\
    & -1 + 48 e^{-\pi \beta} - 312  e^{-2\pi \beta} \ \Big |_{\beta=1.08}
    =  0.2608... >0;  \\
   &       -\frac{6}{4\pi^4 \beta^4} +
    \frac{1}{(1-e^{-2\pi})^4} \Big ( e^{-\pi \beta}- 24 e^{-2\pi \beta} +
    104 e^{-3\pi \beta} \Big )\ \Big |_{\beta=1.08} = -0.0007930... <0; \\
    &      \Big ( \frac{4\pi}{\beta^2} - \frac{8}{\beta^3}  \Big ) e^{\pi \beta}
   - \Big (\frac{4\pi}{\beta^2} + \frac{8}{\beta^3} \Big )
   + \frac{A e^{-\pi \beta} (1+e^{-\pi \beta})^3}{\pi^2} \ \Big |_{\beta =1.08}
  = 35.20...>0. 
\end{align}
The proof of \eqref{quo-mono} is complete.
\end{proof}

\

\noindent {\bf Acknowledgment}. S. Luo and J. Wei are partially
supported by NSERC-RGPIN-2018-03773;
X. Ren is partially supported by NSF grant DMS-1714371.

\bibliography{citation}
\bibliographystyle{plain}

\end{document}